\documentclass{amsart}
\usepackage{enumitem}
\usepackage[all]{xy} 
\usepackage{multirow} 
\usepackage{tabu}
\tabulinesep=1.2mm
\usepackage{leftindex}
\usepackage{hyperref}


\newtheorem{theorem}{Theorem}[section]
\newtheorem{lemma}[theorem]{Lemma}

\newtheorem{prop}[theorem]{Proposition}

\theoremstyle{definition}
\newtheorem{definition}[theorem]{Definition}
\newtheorem{example}[theorem]{Example}

\numberwithin{equation}{section}

\newcommand{\Z}{\mathbb{Z}}
\newcommand{\coh}{\mathcal{H}^0}
\newcommand{\zcoh}{\widehat{\mathcal{H}}^0}
\newcommand{\fcoh}{\mathcal{H}^1}

\newcommand{\Br}{\operatorname{Br}}
\newcommand{\rBr}{\widehat{\operatorname{Br}}}
\newcommand{\chars}{\mathcal{X}}
\newcommand{\torus}{\mathcal{D}}
\newcommand{\permcat}[1]{#1\textbf{-perm}}
\newcommand{\modcat}[1]{#1\textbf{-mod}}
\newcommand{\permcatStab}[1]{#1\textbf{-\underline{perm}}}
\newcommand{\modcatStab}[1]{#1\textbf{-\underline{mod}}}
\newcommand{\Modcat}[1]{#1\textbf{-Mod}}

\newcommand{\legend}[1]{\ensuremath{\mathsf{#1}}}

\title{Classifying torsors of tori with Brauer groups}

\author[Alexander Duncan]{Alexander Duncan}
\author[Pankaj Singh]{Pankaj Singh}

\address{Department of Mathematics, University of South Carolina, 
Columbia, SC 29208}
\email{duncan@math.sc.edu}
\email{pksingh@email.sc.edu}

\date{\today}
\subjclass[2020]{%
11E72, 
16K50, 
20C20 
}
\keywords{Algebraic tori, Mackey functors, Galois cohomology}

\begin{document}

\begin{abstract}
Using Mackey functors, we provide a general framework for classifying
torsors of algebraic tori in terms of Brauer groups of finite field
extensions of the base field.
This generalizes Blunk's description of the tori associated to del Pezzo
surfaces of degree 6 to all retract rational tori,
essentially the largest class for which this is possible.
\end{abstract}

\maketitle

\section{Introduction}
\label{sec:intro}

Suppose $T$ is an algebraic torus over a field $k$.
Recall that the first Galois cohomology group $H^1(k,T)$
classifies $T$-torsors up to isomorphism.
The purpose of this paper is to investigate isomorphisms of the form 
\begin{equation} \label{eq:main_equation}
H^1(k,T) \cong \operatorname{ker}\left( \prod_{i=1}^r \operatorname{Br}(F_i)
\to \prod_{i=1}^s \operatorname{Br}(L_i)
\right)
\end{equation}
where $F_1,\ldots, F_r, L_1,\ldots, L_s$ are finite separable field
extensions of $k$.

Our inspiration and motivation come from Blunk~\cite{BLUNK}, where
del Pezzo surfaces of degree $6$ over a field $k$ are classified
in terms of certain finite dimensional algebras over $k$.
A (slightly weaker) reformulation of his main result is as follows:

\begin{theorem}[Blunk]
Suppose $T$ is a torus acting faithfully with an open orbit on a del
Pezzo surface of degree $6$ over a field $k$.
Let $F_2/k$ and $F_3/k$ be linearly disjoint field extensions of degree
$2$ and $3$
such that the Galois closure of $F_2F_3/k$ is a splitting field of $T$.
There is an injective group homomorphism
\[
H^1(k,T) \hookrightarrow
\operatorname{Br}(F_2)\times
\operatorname{Br}(F_3)
\]
whose image is the set of pairs $(B,Q)$ such that
\[
\operatorname{Res}_{F_2F_3/F_2}B=0,\
\operatorname{Cor}_{F_2/k}B=0,\
\operatorname{Res}_{F_2F_3/F_3}Q=0,\
\operatorname{Cor}_{F_3/k}Q=0.
\]
\end{theorem}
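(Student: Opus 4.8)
The plan is to reduce the statement to a computation with $\Z G$-lattices and then run it through Galois cohomology; the general framework of the paper is designed to automate this, but here one can carry it out by hand.

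\emph{Setting up the lattice.} Over $\bar k$ a del Pezzo surface of degree $6$ is the blow-up of $\mathbb{P}^2$ at three points, hence toric with $T$ its open dense torus, and the complement $X \setminus T$ is the hexagon of six $(-1)$-curves. Thus $\operatorname{Gal}(\bar k/k)$ acts on the configuration through the automorphism group of the hexagon, $S_3 \times \Z/2\Z$, with $S_3$ permuting the three pairs of opposite curves and the central factor the Bertini involution. Write $E$ for the Galois closure of $F_2F_3/k$ and $G = \operatorname{Gal}(E/k)$; the splitting hypothesis says exactly $G \hookrightarrow S_3 \times \Z/2\Z$, and a short group-theoretic check lets us take $G_2 := \operatorname{Gal}(E/F_2)$ to be the index-$2$ preimage of $S_3$, $G_3 := \operatorname{Gal}(E/F_3)$ the index-$3$ stabilizer of a pair of opposite curves, and $\operatorname{Gal}(E/F_2F_3) = G_2 \cap G_3$. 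By Shapiro's lemma the permutation modules $\Z[G/G_2]$, $\Z[G/G_3]$, $\Z[G/(G_2\cap G_3)]$ and the trivial module $\Z$ are the character lattices of $R_{F_2/k}\mathbb{G}_m$, $R_{F_3/k}\mathbb{G}_m$, $R_{F_2F_3/k}\mathbb{G}_m$, $\mathbb{G}_m$, with $H^2(k,-)$ equal to $\operatorname{Br}(F_2)$, $\operatorname{Br}(F_3)$, $\operatorname{Br}(F_2F_3)$, $\operatorname{Br}(k)$.

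\emph{A permutation resolution.} Starting from the toric exact sequence $0 \to X^*(T) \to \bigoplus_{\text{rays}} \Z \to \operatorname{Pic}(X_{\bar k}) \to 0$, identify $\bigoplus_{\text{rays}}\Z$ with $\Z[G/(G_2\cap G_3)]$. Since a character pairs to opposite signs against a ray and its opposite, $X^*(T)$ lies in the kernel $N$ of the projection $\Z[G/(G_2\cap G_3)] \twoheadrightarrow \Z[G/G_3]$, and an explicit computation with the hexagon identifies $N/X^*(T)$ with the norm-one module $\ker(\Z[G/G_2] \to \Z)$ of the quadratic extension $F_2/k$ (equivalently, $X^*(T) \cong (\text{standard rep of }S_3)\otimes(\text{sign of }\Z/2\Z)$). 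Splicing the evident two-term permutation presentations of $N$ and of this norm-one module yields a four-term exact sequence
\[
0 \longrightarrow X^*(T) \longrightarrow \Z[G/(G_2\cap G_3)] \xrightarrow{\ d_0\ } \Z[G/G_2]\oplus\Z[G/G_3] \xrightarrow{\ d_1\ } \Z \longrightarrow 0,
\]
with $d_0$ built from the canonical projections and $d_1$ a difference of augmentations. Dually this is an exact sequence of $k$-tori
\[
1 \longrightarrow \mathbb{G}_m \longrightarrow R_{F_2/k}\mathbb{G}_m \times R_{F_3/k}\mathbb{G}_m \longrightarrow R_{F_2F_3/k}\mathbb{G}_m \longrightarrow T \longrightarrow 1,
\]
in which the maps $R_{F_i/k}\mathbb{G}_m \to R_{F_2F_3/k}\mathbb{G}_m$ induce $\operatorname{Res}_{F_2F_3/F_i}$ on $H^2$.

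\emph{Computing $H^1(k,T)$.} Split the four-term sequence at $Y := \operatorname{im}(R_{F_2/k}\mathbb{G}_m \times R_{F_3/k}\mathbb{G}_m \to R_{F_2F_3/k}\mathbb{G}_m)$. Hilbert's Theorem $90$ kills $H^1$ of every quasi-trivial torus above, so $1 \to Y \to R_{F_2F_3/k}\mathbb{G}_m \to T \to 1$ gives $H^1(k,T) \cong \ker\!\big(H^2(k,Y) \to \operatorname{Br}(F_2F_3)\big)$, while $1 \to \mathbb{G}_m \to R_{F_2/k}\mathbb{G}_m \times R_{F_3/k}\mathbb{G}_m \to Y \to 1$ gives exactness of
\[
\operatorname{Br}(k) \xrightarrow{(\operatorname{Res}_{F_2/k},\,\operatorname{Res}_{F_3/k})} \operatorname{Br}(F_2)\oplus\operatorname{Br}(F_3) \longrightarrow H^2(k,Y) \longrightarrow H^3(k,\mathbb{G}_m) \longrightarrow H^3(F_2,\mathbb{G}_m)\oplus H^3(F_3,\mathbb{G}_m).
\]
Since $\operatorname{Cor}\circ\operatorname{Res}$ is multiplication by the degree and $\gcd(2,3)=1$, the last arrow is injective, so $H^2(k,Y) = \operatorname{coker}(\operatorname{Res})$ and $H^1(k,T)$ is the subquotient of $\operatorname{Br}(F_2)\oplus\operatorname{Br}(F_3)$ consisting of pairs with $\operatorname{Res}_{F_2F_3/F_2}B = \operatorname{Res}_{F_2F_3/F_3}Q$, modulo diagonal restrictions from $\operatorname{Br}(k)$. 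Finally I would exhibit a canonical splitting: for such a pair there is a \emph{unique} $c \in \operatorname{Br}(k)$, namely $c = \operatorname{Cor}_{F_3/k}Q - \operatorname{Cor}_{F_2/k}B$, for which $(B - \operatorname{Res}_{F_2/k}c,\ Q - \operatorname{Res}_{F_3/k}c)$ satisfies all four vanishing conditions; checking this uses the Mackey double-coset formula, the fact that a class split by $F_2F_3$ has exponent dividing $3$ over $F_2$ and dividing $2$ over $F_3$, and again $\gcd(2,3)=1$. This identifies $H^1(k,T)$ with precisely the set of pairs in the statement, and additivity throughout makes the resulting map a group homomorphism.

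\emph{Main obstacle.} The crux is the $\Z G$-lattice computation in the second step: one must pin down $X^*(T)$ as a Galois module, not merely its rank, and verify exactness of the four-term resolution. (This is the content that Blunk extracts geometrically through the associated degree-$6$ algebra; the point of the present framework is to make both this and the cohomological bookkeeping into a formal, reusable mechanism.) The second delicate point is the last step --- upgrading the subquotient of $\operatorname{Br}(F_2)\oplus\operatorname{Br}(F_3)$ to the honest four-condition subgroup, ensuring no stray class from $H^3(k,\mathbb{G}_m)$ survives and that the canonical lift always exists --- where the coprimality of $[F_2:k]$ and $[F_3:k]$ together with the exponent bounds on relative Brauer groups are exactly the inputs that force it through. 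I expect to organize this last part, as the paper presumably does, via the Mackey-functor structure on $\operatorname{Br}$, which packages the restriction and corestriction relations uniformly.
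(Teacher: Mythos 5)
Your proposal takes a genuinely different route from the paper. The paper does not re-prove Blunk's theorem from scratch by constructing an explicit $\Z G$-lattice resolution; instead, it builds the general Mackey-functor machinery, decomposes $\widehat{\mathcal{H}}^1(M)$ into trivial-source summands $\mathcal{T}_{\legend{A1},2}\oplus\mathcal{T}_{\legend{B},2}$, reads off a permutation presentation from Table~\ref{tab:perm_to_ts}, and then applies Theorem~\ref{thm:main}. This yields the \emph{asymmetric} description in Proposition~\ref{prop:ourBlunk}, where the two conditions that are not relative-Brauer conditions are $\operatorname{Res}_{F_2F_3/F_2}(B)=0$ and $\operatorname{Cor}_{F_2/k}(B)+\operatorname{Cor}_{F_3/k}(Q)=0$; Section~\ref{ssec:blunk_compare} then shows this agrees with Blunk's symmetric set of four conditions by a torsion argument that uses splitting by $K$. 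Your approach instead writes down a putative four-term exact sequence of permutation lattices, dualizes to tori, and chases two long exact sequences to identify $H^1(k,T)$ as a subquotient of $\Br(F_2)\oplus\Br(F_3)$. This is more hands-on and more geometric, but it lands you in a quotient rather than a subgroup, and the final ``canonical lift'' step is where you pay for that.

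Here is the concrete gap. You take a representative $(B,Q)$ with $\operatorname{Res}_{F_2F_3/F_2}(B)=\operatorname{Res}_{F_2F_3/F_3}(Q)=:\alpha$ and set $c=\operatorname{Cor}_{F_3/k}Q-\operatorname{Cor}_{F_2/k}B$. Using the identity $3\operatorname{Cor}_{F_2/k}(B)=2\operatorname{Cor}_{F_3/k}(Q)$ (which does follow from $\operatorname{Cor}_{F_2F_3/k}(\alpha)$ computed two ways), one checks that the shifted pair satisfies both corestriction conditions $\operatorname{Cor}_{F_2/k}(B')=\operatorname{Cor}_{F_3/k}(Q')=0$. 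But the restriction conditions $\operatorname{Res}_{F_2F_3/F_2}(B')=\operatorname{Res}_{F_2F_3/F_3}(Q')=0$ reduce to $\operatorname{Res}_{F_2F_3/k}(c)=\alpha$, and applying Mackey gives
\[
\operatorname{Res}_{F_2F_3/k}(c)=\operatorname{Cor}_{K/F_2F_3}\bigl(\delta^\ast\operatorname{Res}_{K/F_3}(Q)\bigr)-\alpha .
\]
This equals $\alpha$ only if $\operatorname{Res}_{K/F_3}(Q)=0$ \emph{and} $2\alpha=0$, i.e., only once you know the representative is split by $K$. Nothing in the subquotient picture with absolute Brauer groups forces that. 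The fix --- and this is exactly why the paper works with relative Brauer groups (Tate Mackey functors) --- is to first establish that the lift $(B,Q)$ can be chosen in $\Br(K/F_2)\times\Br(K/F_3)$: since $H^1(k,T)=H^1(K/k,T(K))$, the preimage in $H^2(Y)$ is in the image of the relative group, and the surjection $\rBr(F_2)\times\rBr(F_3)\twoheadrightarrow H^2(K/k,Y(K))$ lets you pick a $K$-split representative. Then $\alpha\in\Br(K/F_2F_3)$ is $2$-torsion, the stray term vanishes, and your computation closes. You gesture at this (``exponent bounds on relative Brauer groups'') but the sketch as written invokes splitting by $F_2F_3$ for classes that you have not yet shown to be split by anything, so the argument does not yet go through.

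Two smaller cautions. First, exactness of your four-term sequence is genuinely the crux and is not checked in the proposal (you flag this, but note that matching alternating ranks, which do sum to zero, is far from sufficient: you need $\ker d_0=X^*(T)$ and no torsion in $\ker d_1/\operatorname{im}d_0$). Second, you should confirm that the unique $c$ you propose is actually well-defined modulo nothing --- the ``uniqueness'' claim relies on $\gcd(2,3)=1$ plus $K$-splitting in exactly the way outlined above, so it should be established after, not before, passing to the relative framework. Once you restrict to $K$-split classes from the start, your computation and the paper's Section~\ref{ssec:blunk_compare} become essentially the same torsion bookkeeping; the difference is only in how you arrive at the presentation.
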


In \cite{Duncan2016}, the first author showed how
Blunk's idea can be extended to apply to an arbitrary toric variety
when the tori are retract rational.
Following up on this,
in \cite{BDLM}, it was shown that this is essentially the most general
case where the idea could be applied without appealing to
particularities of the base field.

\begin{theorem}[Ballard, Duncan, Lamarche, McFaddin]
Suppose $T$ is a torus over $k$.
The torus $T$ is retract rational if and only if
there are finite separable field extensions $F_1/k, \ldots, F_r/k$
and a group homomorphism  
\[
H^1(k,T) \to \prod_{i=1}^r \Br(F_i)
\]
that is functorial and injective for
all (not necessarily algebraic) field extensions.
\end{theorem}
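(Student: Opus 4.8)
The plan is to prove the two implications separately: the forward one is essentially the construction of Blunk as generalized in \cite{Duncan2016}, while the converse is the substantive new content.

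\emph{Forward direction.} Suppose $T$ is retract rational. By the classical criterion of Endo--Miyata and Colliot-Th\'el\`ene--Sansuc this is equivalent to the flasque part of the character lattice being invertible: choosing a flasque resolution $0 \to \widehat T \to \widehat Q \to \widehat S \to 0$ with $\widehat Q$ a permutation module, the module $\widehat S$ is a direct summand of a permutation module $\widehat{Q'}$. Dualizing to tori yields an exact sequence $1 \to S \to Q \to T \to 1$ with $Q$ quasi-split, hence with $H^1(K,Q_K)=0$ for every field extension $K/k$ by Hilbert~90 and Shapiro's lemma; the connecting map then gives a functorial injection $H^1(-,T)\hookrightarrow H^2(-,S)$. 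Since $\widehat{Q'}=\widehat S\oplus\widehat S'$ forces $Q'\cong S\times S'$, the functor $H^2(-,S)$ is a direct summand of $H^2(-,Q')\cong\prod_i \Br(F_i\otimes_k-)$ — Shapiro's lemma again, with $F_1,\dots,F_r$ the fixed fields of $E$ recorded by $\widehat{Q'}$. Composing gives the required homomorphism, and since every identification used commutes with base change, functoriality and injectivity over all field extensions are automatic.

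\emph{Converse direction.} Suppose we are given finite separable $F_i/k$ and a functorial injection $\psi\colon H^1(-,T)\hookrightarrow\prod_i \Br(F_i\otimes_k-)$. Reading Shapiro's lemma in the other direction, $\prod_i \Br(F_i\otimes_k-)\cong H^2(-,Q_0)$ for the quasi-split torus $Q_0=\prod_i R_{F_i/k}\mathbb{G}_m$, so $\psi$ becomes a functorial injection $H^1(-,T)\hookrightarrow H^2(-,Q_0)$. Fix a finite Galois splitting field $E/k$ of $T$ and of all the $F_i$, with group $G$. Restricting $\psi$ to subextensions of $E/k$ turns it into a morphism $\underline M_T\to\underline N_0$ of cohomological Mackey functors for $G$, where $\underline M_T(H)=H^1(H,T(E))$ and $\underline N_0$ is an induced — hence permutation-projective — Mackey functor. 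The plan is then: (i) establish, as a Mackey-functor translation of the Endo--Miyata criterion, that $T$ is retract rational exactly when the Mackey functor built from the flasque part of $\widehat T$ (equivalently, a suitable shift of $\underline M_T$) is a direct summand of a permutation Mackey functor; and (ii) upgrade the injection $\underline M_T\hookrightarrow\underline N_0$ to such a splitting. For (ii) the idea is to exploit the extra content of $\psi$ over non-$E$-split fields: feeding $\psi$ the generic fibre of a versal $T$-torsor — the $T$-torsor $Q''\to Q''/T$ arising from an embedding $T\hookrightarrow Q''$ into a quasi-split torus, evaluated over $k(Q''/T)$ — produces in each Brauer factor a class over the function field of a torus over $F_i$ that specializes to zero at the rational base point, and Saltman's geometric criterion for retract rationality, together with the purity and specialization behaviour of Brauer groups, should convert this into an honest retraction.

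I expect step (ii) to be the main obstacle. A functorial injection into a permutation Mackey functor is a priori far weaker than a split injection: an arbitrary embedding of a flasque lattice into a permutation lattice need not split, for otherwise every flasque torus would be retract rational. What makes the splitting possible here is precisely that $\psi$ exists and is injective over \emph{every} field extension, not only the $E$-split ones; the real work is turning that ``geometric'' functoriality into a retraction, equivalently into projectivity of $\underline M_T$ in the category of cohomological Mackey functors. (A contrapositive approach — showing that if the flasque part of $\widehat T$ is a non-unit in the flasque monoid then some subgroup $H\le G$ detects an element of $H^1(H,T(E))$ incompatible with membership in an induced Mackey functor, whence a field over which $\psi$ cannot exist — runs into the same core difficulty of matching the invertibility obstruction against the structure of $H^2$ of quasi-split tori.)
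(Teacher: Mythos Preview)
This theorem is not proved in the present paper: it is quoted from \cite{BDLM} in the introduction and used as motivation, so there is no ``paper's own proof'' to compare against.

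On the substance: your forward direction is correct and is the standard argument. The converse, however, is not a proof but a plan with an acknowledged hole. You correctly identify that a functorial injection of Mackey functors into a permutation-projective one is strictly weaker than a split injection, and that the extra input must come from injectivity over \emph{non}-split extensions (in particular over function fields of versal torsors). But the sentence ``Saltman's geometric criterion \ldots together with the purity and specialization behaviour of Brauer groups, should convert this into an honest retraction'' is exactly where the argument has to happen, and you have not carried it out. Concretely: you need to show that the image under $\psi$ of the class of the versal torsor in $\prod_i \Br\bigl(F_i\otimes_k k(Q''/T)\bigr)$ is unramified along every divisor of (a smooth compactification of) $Q''/T$, and then that this unramified class, combined with injectivity of $\psi$, forces the versal torsor to extend over an open set meeting the rational-point locus --- this is the step that yields retract rationality via Saltman's criterion. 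None of this is automatic; the purity argument requires knowing that the Brauer classes you land in are controlled by the flasque part of $\widehat{T}$, which is close to assuming what you want. Until that step is written down, the converse remains a sketch.
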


However, the results for general tori
have two shortcomings when compared with Blunk's.
First, while a map into a product of Brauer groups is known to exist,
little is known about the specific form of its image.
Our first theorem, which is a straightforward application of
standard techniques, remedies this:

\begin{theorem} \label{thm:main_existence}
If $T$ is retract rational,
then an isomorphism \eqref{eq:main_equation} exists.
\end{theorem}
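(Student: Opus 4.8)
The plan is to combine a flasque resolution of $T$ with the elementary observation that a direct summand of an abelian group is the kernel of the complementary idempotent. By the theory of flasque resolutions of tori (dualize a flasque resolution $0\to \widehat{T}\to P\to F\to 0$ of the character lattice; cf.\ \cite{Duncan2016,BDLM}), there is an exact sequence of $k$-tori
\[
1 \longrightarrow S \longrightarrow Q \longrightarrow T \longrightarrow 1
\]
with $Q$ quasi-trivial, say $Q=\prod_{i=1}^{r} R_{E_i/k}\mathbb{G}_m$ for finite separable extensions $E_i/k$, and with $\widehat{S}$ flasque. Since $T$ is retract rational, Saltman's criterion forces $\widehat{S}$ to be invertible, i.e.\ a direct summand of a permutation $G$-lattice. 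Now $H^1(k,Q)=\prod_i H^1(E_i,\mathbb{G}_m)=0$ by Hilbert~90 and Shapiro's lemma, so the connecting map in the long exact cohomology sequence yields an isomorphism $H^1(k,T)\xrightarrow{\ \sim\ }\operatorname{ker}\big(H^2(k,S)\to H^2(k,Q)\big)$, where $H^2(k,Q)=\prod_{i=1}^{r}\Br(E_i)$, again by Shapiro. Write $\phi\colon H^2(k,S)\to \prod_{i}\Br(E_i)$ for this map; so it remains to realize $H^2(k,S)$ itself as the kernel of a map between products of Brauer groups and then graft $\phi$ onto it.

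To this end, pick a torus $S'$ with $\widehat{S}\oplus\widehat{S'}$ a permutation lattice, so that $S''\colonequals S\times S'$ is quasi-trivial, say $S''=\prod_{j=1}^{s} R_{L_j/k}\mathbb{G}_m$. Then $H^2(k,S'')=H^2(k,S)\oplus H^2(k,S')=\prod_{j=1}^{s}\Br(L_j)$. Let $p\colon \prod_j\Br(L_j)\twoheadrightarrow H^2(k,S)$ be the projection, $\iota$ the inclusion, and $e\colonequals \iota\circ p$ the corresponding idempotent endomorphism of $\prod_j\Br(L_j)$ (in fact induced by the idempotent endomorphism $S''\to S\hookrightarrow S''$ of tori). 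Since $\operatorname{im}(e)=\operatorname{ker}(1-e)$ for an idempotent, we get $H^2(k,S)\cong \operatorname{ker}(1-e)$. Composing, $H^1(k,T)$ is carried isomorphically by $p$ onto the kernel of
\[
\prod_{j=1}^{s} \Br(L_j) \longrightarrow
\Big( \prod_{j=1}^{s} \Br(L_j) \Big) \times \Big( \prod_{i=1}^{r} \Br(E_i) \Big),
\qquad
x \longmapsto \big( (1-e)(x),\ \phi(p(x)) \big):
\]
indeed, $(1-e)(x)=0$ forces $x=\iota(p(x))$ with $p(x)\in H^2(k,S)$, and then the second coordinate vanishes exactly when $p(x)\in\operatorname{ker}\phi=H^1(k,T)$. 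This is of the shape \eqref{eq:main_equation}, with the domain fields $L_1,\dots,L_s$ and the target fields $L_1,\dots,L_s,E_1,\dots,E_r$; all of these are finite separable over $k$ since $T$ is split by a finite Galois extension and every field occurring is an intermediate field of it.

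The only nontrivial idea here is the passage from \emph{direct summand} to \emph{kernel} via the complementary idempotent, which is what lets us stay inside the class of (products of) Brauer groups; everything else is the standard flasque-resolution formalism together with Hilbert~90 and Shapiro's lemma. Accordingly I expect the main point requiring care to be bookkeeping: checking that $e$ and the composite map above are honest group homomorphisms between the stated groups, that the isomorphism $H^1(k,T)\cong\operatorname{ker}\phi$ is compatible with the identifications, and that all extensions $E_i,L_j$ are genuinely finite separable. None of these should present a real obstacle.
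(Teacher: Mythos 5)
Your proof is correct, but it takes a genuinely different route from the paper's. You build from a flasque resolution of the \emph{first} type $0 \to M \to P \to F \to 0$, i.e.\ a torus sequence $1 \to S \to Q \to T \to 1$ with $Q$ quasi-trivial and $\chars(S)=F$ flasque and (by retract rationality) invertible; Hilbert~90 for $Q$ gives $H^1(k,T) \cong \ker\bigl(H^2(k,S) \to H^2(k,Q)\bigr)$. Since $S$ is only special, $H^2(k,S)$ is not yet a product of Brauer groups, and that is precisely what forces your complementary-idempotent step: you embed $H^2(k,S)$ as $\ker(1-e)$ inside $H^2(k,S'')$ for a quasi-trivial $S''$ and then splice in the map to $H^2(k,Q)$. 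The paper instead starts from a coflasque resolution of the \emph{second} type $0 \to M \to C \to P \to 0$, with $P$ permutation and $C$ invertible (Lemma~\ref{lem:fl_cofl_compare} together with Theorem~\ref{thm:retrat_inv}); the torus sequence $1 \to \torus(P) \to \torus(C) \to T \to 1$ has $\torus(C)$ special, so $H^1(k,\torus(C))=0$ and $H^1(k,T) \cong \ker\bigl(H^2(k,\torus(P)) \to H^2(k,\torus(C))\bigr)$. The crucial difference is that now the \emph{source} of the map is already a product of Brauer groups, so no idempotent is needed: one only enlarges the target, replacing $H^2(k,\torus(C))$ by $H^2(k,\torus(Q'))$ for a permutation lattice $Q'$ with $C$ a direct summand, which leaves the kernel unchanged because the split inclusion $C \hookrightarrow Q'$ makes $H^2(k,\torus(C)) \to H^2(k,\torus(Q'))$ injective. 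Both arguments are sound; the dual choice of resolution converts your ``direct summand $\Rightarrow$ kernel'' device into the trivial fact that split-injective postcomposition preserves kernels. The paper's version is thereby slightly leaner (the target has one fewer block of Brauer groups), it produces explicitly the $G$-module morphism of permutation lattices that Theorem~\ref{thm:main_detailed} later relies on, and it yields the relative statement of Theorem~\ref{thm:main_existence_relative} at the same time, which your construction does not give directly.
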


The second shortcoming is that, while the proofs are effective,
the constructions are not especially illuminating.
Indeed, there are infinitely many choices of isomorphism
\eqref{eq:main_equation}.  Even when the groups 
$\{\operatorname{Br}(F_i)\},\{\operatorname{Br}(L_j)\}$ are fixed,
there are many different maps.  Moreover,
it is not easy to see when two different descriptions
of the same map are equal.
Thus, the goal of this paper is to develop computational
tools to understand these maps.
Our main tool is the machinery of Mackey functors.

In order to discuss our results, we briefly overview some facts that are
developed in more detail later in the paper.
A torus $T$ has a Galois splitting field $K/k$ with Galois group $G$.
The character lattice $M$ of $T$ is a $\mathbb{Z}G$-module.
To each $G$-set $X$, we may construct a \emph{permutation lattice}
$\mathbb{Z}[X]$ which is the free abelian group on $X$ with the
evident $G$-action.
There is a canonical bijection between transitive $G$-sets and
field extensions $k \subseteq F \subseteq K$.

A (cohomological) \emph{Mackey functor} can be thought of as an
assignment of an abelian
group to each subgroup $H\le G$ along with restriction, corestriction,
and conjugation maps like in group cohomology.
In particular, for a $G$-module $L$, 
group homology $H_i(-,L)$, group cohomology $H^i(-,L)$,
and Tate cohomology $\widehat{H}^i(-,L)$ are all examples.

\begin{theorem} \label{thm:main_detailed}
Suppose $T$ is retract rational.
Given $G$-sets $X$ and $Y$ and an exact sequence of
Mackey functors
\[
H^0(-,\mathbb{Z}[Y]) \xrightarrow{\psi} H^0(-,\mathbb{Z}[X]) \to H^1(-,M) \to 0,
\]
there is an associated exact sequence
\[
0 \to H^1(k,T) \to \prod_{i=1}^r \Br(F_i)
\xrightarrow{\psi^\ast} \prod_{i=1}^s \Br(L_i)
\]
where $F_1,\ldots, F_r$ (resp. $L_1,\ldots, L_s$) are the
finite separable field extensions corresponding to the orbits of
$X$ (resp. $Y$).
\end{theorem}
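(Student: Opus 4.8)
The plan is to exhibit the right-hand side of the sought exact sequence as a $\operatorname{Hom}$ group out of $H^1(-,M)$ into a fixed \emph{Brauer Mackey functor}, and then to compute that $\operatorname{Hom}$ group by means of a flasque resolution of $M$, where the retract-rationality hypothesis is used.

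Let $\mathcal{B}$ denote the assignment $H \mapsto \Br(K^H)$, equipped with restriction, corestriction, and conjugation coming from the usual maps between Brauer groups of intermediate fields of $K/k$; since $\operatorname{Cor}\circ\operatorname{Res}$ is multiplication by the index, $\mathcal{B}$ is a cohomological Mackey functor. In the category of cohomological Mackey functors the fixed-point functors $H^0(-,\Z[G/H])$ are projective generators, and the Yoneda lemma gives a natural isomorphism $\operatorname{Hom}\bigl(H^0(-,\Z[G/H]),\mathcal{N}\bigr)\cong\mathcal{N}(G/H)$ for every cohomological Mackey functor $\mathcal{N}$. Applying this with $\mathcal{N}=\mathcal{B}$ and summing over the orbits of $X$ and of $Y$ identifies $\operatorname{Hom}(H^0(-,\Z[X]),\mathcal{B})\cong\prod_{i=1}^r\Br(F_i)$ and $\operatorname{Hom}(H^0(-,\Z[Y]),\mathcal{B})\cong\prod_{j=1}^s\Br(L_j)$, and one checks that the map induced by $\psi$ is precisely the map $\psi^{\ast}$ in the statement. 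Since $\operatorname{Hom}(-,\mathcal{B})$ is a contravariant left-exact functor, applying it to the given exact sequence of Mackey functors yields the exact sequence
\[
0 \to \operatorname{Hom}\bigl(H^1(-,M),\mathcal{B}\bigr) \to \prod_{i=1}^r\Br(F_i) \xrightarrow{\ \psi^{\ast}\ } \prod_{j=1}^s\Br(L_j).
\]
It therefore suffices to produce a natural isomorphism $\operatorname{Hom}(H^1(-,M),\mathcal{B})\cong H^1(k,T)$.

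For this, recall that inflation--restriction together with Hilbert's Theorem 90 over $K$ (where $T$ splits) gives $H^1(k,T)\cong H^1(G,T(K))$, and more generally realizes $H\mapsto H^1(K^H,T)$ as the Mackey functor $H^1(-,T(K))$. Fix a flasque resolution $0\to M\to P\to F\to 0$ of the character lattice, with $P$ a permutation lattice and $F$ flasque; dualizing gives $0\to F^{\vee}\to P^{\vee}\to N\to 0$ and hence an exact sequence of tori $1\to S\to Q\to T\to 1$ in which $Q$ is quasi-trivial and $S$ has character lattice $F$. Taking fixed points and using that $H^1(H,P)=0$ for every $H\le G$ yields a presentation $H^0(-,P)\to H^0(-,F)\to H^1(-,M)\to 0$ of cohomological Mackey functors. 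Here the hypothesis enters: $T$ is retract rational exactly when the flasque module $F$ is invertible, so $H^0(-,F)$ is a direct summand of some $H^0(-,\Z[Z])$. Consequently $\operatorname{Hom}(H^0(-,F),\mathcal{B})$ is naturally a direct summand of $\mathcal{B}(Z)=\prod\Br(K^{H})$, and, comparing with the quasi-trivial torus with character lattice $\Z[Z]$ and splitting off the relevant idempotent, one identifies $\operatorname{Hom}(H^0(-,L),\mathcal{B})\cong H^2(k,S_L)$ naturally for every invertible lattice $L$, where $S_L$ is the torus with character lattice $L$; for $L=\Z[G/H]$ this is just Shapiro's lemma $\Br(K^H)=H^2(k,R_{K^H/k}\mathbb{G}_m)$. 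Under these identifications the presentation above turns $\operatorname{Hom}(H^1(-,M),\mathcal{B})$ into $\ker\bigl(H^2(k,S)\to H^2(k,Q)\bigr)$, and the long exact sequence of $1\to S\to Q\to T\to 1$ together with $H^1(k,Q)=0$ (Hilbert 90 for the quasi-trivial torus $Q$) identifies this kernel with $H^1(k,T)$, as desired.

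The main obstacle is the bundle of naturality statements in the previous paragraph: that $\mathcal{B}$ is genuinely a cohomological Mackey functor with the stated functoriality; that the Yoneda--Shapiro identification $\operatorname{Hom}(H^0(-,L),\mathcal{B})\cong H^2(k,S_L)$ is natural in the invertible lattice $L$, so that the surjection $P\twoheadrightarrow F$ is carried to the inclusion-induced map $H^2(k,S)\to H^2(k,Q)$; and that the resulting isomorphism $\operatorname{Hom}(H^1(-,M),\mathcal{B})\cong H^1(k,T)$ is independent of the chosen flasque resolution and compatible with the first part of the argument, so that the $\psi^{\ast}$ occurring in the displayed sequence really is the one named in the statement. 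Retract rationality is used in exactly one place — to make $H^0(-,F)$ a summand of a representable functor, so that $\operatorname{Hom}(H^0(-,F),\mathcal{B})$ is an honest piece of a product of Brauer groups with no higher $\operatorname{Ext}$ terms intervening; this is the same obstruction that appears in the theorem of Ballard, Duncan, Lamarche, and McFaddin.
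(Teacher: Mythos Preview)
Your argument is correct and takes a genuinely different route from the paper's. You recognize $\prod_i\Br(F_i)$ and $\prod_j\Br(L_j)$ as $\operatorname{Hom}(\coh(X),\mathcal{B})$ and $\operatorname{Hom}(\coh(Y),\mathcal{B})$ via Yoneda, apply left-exactness of $\operatorname{Hom}(-,\mathcal{B})$ to the given presentation to exhibit the kernel intrinsically as $\operatorname{Hom}(\fcoh(M),\mathcal{B})$, and then compute that Hom group via a flasque resolution of the first type, using retract rationality to place $\coh(F)$ in the Karoubi envelope of $\permcat{\Z G}$ so that the Yoneda/Shapiro identification $\operatorname{Hom}(\coh(L),\mathcal{B})\cong H^2(k,\torus(L))$ extends to $L=F$. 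The paper instead first manufactures one specific \emph{reference} presentation out of a coflasque resolution of the second type (this is Theorem~\ref{thm:main_existence_relative}), for which the Brauer sequence is already known to have kernel $H^1(k,T)$, and then compares the given presentation to the reference by extending both to projective resolutions of $\fcoh(M)$, invoking homotopy equivalence, using fullness of $\coh$ to lift the homotopy to the lattice level, and pushing it forward to the Brauer complexes.

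Your approach is cleaner here: the kernel is identified once and for all as a Hom out of $\fcoh(M)$, so its independence of the presentation is automatic, and retract rationality enters at a single transparent point. The paper's comparison-of-resolutions argument is less direct but is tailored to run verbatim through the stable category $\permcatStab{\Z G}$, which is what carries the relative (Tate) version and feeds into the explicit permutation-resolution machinery of Sections~\ref{sec:perm_resolutions}--\ref{sec:blunk_example}. The naturality checks you flag---that $\operatorname{Hom}(\coh(L),\mathcal{B})\cong H^2(k,\torus(L))$ is natural in the invertible lattice $L$, and that the map induced by $\psi$ on Hom groups is the torus-level $\psi^\ast$---are routine once both sides are viewed as additive functors on the idempotent completion of $\permcat{\Z G}$: they agree on permutation lattices and on morphisms between them by the very construction of Yoshida's equivalence, hence on all summands.
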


The maps $\psi$ and $\psi^\ast$ have several interpretations,
discussed in detail later in the paper.
The data defining the map $\psi$ boils down to a
$\mathbb{Z}G$-module morphism 
$\mathbb{Z}[Y] \to \mathbb{Z}[X]$.
The Brauer groups can be thought of as Galois cohomology groups of
quasitrivial tori and $\psi$ induces a morphism of these tori.
Alternatively, one can describe maps on the Brauer groups directly in
terms of restrictions, corestrictions, and
maps induced by field isomorphisms.
Our theorem gives all maps that occur using any of these interpretations.

The main advantage of Theorem~\ref{thm:main_detailed} is that
it transforms questions about Brauer groups to questions
about integral representation theory of finite groups.
However, by working with \emph{relative} Brauer groups instead,
we can reduce to questions involving modular representation
theory over finite fields, which is even more practical.

For a field $k \subseteq F \subseteq K$, we have the
\emph{relative Brauer group}
\[
\Br(K/F) := \operatorname{ker}\left(
\Br(F) \to \Br(K) \right).
\]
Since $H^1(k,T)=H^1(K/k,T(K))$ in our context,
all the invariants are already captured by the relative Brauer groups.

Our theorems above apply to relative Brauer groups as well.
Moreover, one can use what we call \emph{Tate Mackey functors} in this case,
which provide technical advantages.
In this context, there exists a finite set of ``minimal''
descriptions as in Theorem~\ref{thm:main_detailed},
and they can be computed in practice.

\smallskip

The rest of the paper is organized as follows.
In Section~\ref{sec:coflasque}, we review the theory of flasque and
coflasque lattices.  In Section~\ref{sec:tori}, we review algebraic tori
and their Galois cohomology.
In Section~\ref{sec:existence_proof}, we prove
Theorem~\ref{thm:main_existence} and its relative version.
In Section~\ref{sec:Mackey}, we recall the theory of cohomological
Mackey functors and develop the theory.
In Section~\ref{sec:main_theorem}, we prove
Theorem~\ref{thm:main_detailed} and its relative version.
In Section~\ref{sec:perm_resolutions}, we describe how one can
determine all possible descriptions of $H^1(k,T)$ using modular
representation theory.
Finally, in Section~\ref{sec:blunk_example} we revisit Blunk's theorem
and show how his description (and others) can be (re)discovered
using our theory.

\section{Flasque and coflasque resolutions}
\label{sec:coflasque}

We recall standard facts about the theory of flasque and coflasque
resolutions of $G$-lattices
(see, e.g., \cite{colliot-thelene1977r}
\cite{Colliot-Thelene.Sansuc87}
\cite{Lee09Flasque},
\cite{BDLM}).
Throughout this section, $G$ is a finite group.

Let $R$ be a ring.
Let $RG$ denote the group $R$-algebra of $G$.
An $RG$-lattice is a left $RG$-module
that is finitely generated and free as an $R$-module.
Given a $G$-set $X$, let $R[X]$ denote the free $R$-module with basis
indexed by $X$ with the evident left $G$-action.
A \emph{permutation $RG$-module} is a left $RG$-module of the form
$R[X]$ for some $G$-set $X$.
A \emph{permutation $RG$-lattice} is a finitely generated permutation
$RG$-module, which is, in particular, an $RG$-lattice.

Let $\Modcat{RG}$ denote the category of $RG$-modules, and
$\modcat{RG}$ denote the category of finitely generated
$RG$-modules.
Let $\permcat{RG}$ the full subcategory of permutation $RG$-lattices
in $\modcat{RG}$.
We simply say ``$G$-lattice'' etc.~when the ring $R$ is equal to the
integers $\mathbb{Z}$,
which will be the case for the remainder of this section.

Two $G$-lattices $L_1,L_2$ are \emph{stably permutation equivalent}
if there exist permutation $G$-lattices $P_1,P_2$ such that
$L_1 \oplus P_1 \cong L_2 \oplus P_2$.
By Shapiro's Lemma, for a permutation lattice $P$,
we have $H^1(G,P)=\widehat{H}^{\pm 1}(G,P)=0$.

\begin{definition}
Let $L$ be a $G$-lattice. 
\begin{enumerate}
\item $L$ is \emph{stably permutation} if $L$ is stably permutation
equivalent to a permutation lattice.
\item $L$ is \emph{invertible} if it is a direct summand of a
permutation $G$-lattice.
\item $L$ is {\it flasque} if $\widehat{H}^{-1}(H, L)=0$ for all subgroups $H \le G$.
\item $L$ is {\it coflasque} if $\widehat{H}^1(H, L)=0$ for all subgroups $H \le G$.
\end{enumerate}
\end{definition}

Given a $G$-lattice $L$, the \emph{dual $G$-lattice}
$L^{\vee}:= \operatorname{Hom}_{\Z}(L, \Z)$ is the group of all
group homomorphisms from $L$ to $\Z$ with the induced $G$-action, where
$G$ acts trivially on $\Z$.
Duality induces an exact anti-equivalence of the category of
$G$-lattices with itself.
Moreover, coflasque lattices are dual to flasque lattices.

\begin{definition} \label{def:fl_cofl_res}
Let $L$ be a $G$-lattice.
\begin{enumerate}
\item A \emph{flasque resolution of $L$ of the first type} is a short exact sequence 
\[
0\rightarrow L \rightarrow P \rightarrow F \rightarrow 0,
\]
\item a \emph{flasque resolution of $L$ of the second type} is a short exact sequence 
 \[
0\rightarrow P \rightarrow F \rightarrow L \rightarrow 0,
\]
\item a \emph{coflasque resolution of $L$ of the first type} is a short exact sequence 
\[
0\rightarrow C \rightarrow P \rightarrow L \rightarrow 0,
\]
\item a \emph{coflasque resolution of $L$ of the second type} is a short exact sequence 
\[
0\rightarrow L \rightarrow C \rightarrow P \rightarrow 0,
\]
\end{enumerate}
where, in each of the above sequences, $P$ is a permutation lattice,
$C$ is a coflasque lattice, and $F$ is a flasque lattice.
\end{definition}

\begin{lemma}
Flasque and coflasque resolutions of both types always exist.
\end{lemma}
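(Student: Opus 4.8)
The plan is to construct coflasque resolutions of the first type directly, and then to obtain the other three types by formal arguments — duality together with a single fibre product. The hard part, and essentially the only step with real content, is building a permutation lattice surjecting onto $L$ whose kernel is coflasque; everything else is bookkeeping.

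\emph{Coflasque resolutions of the first type.} First I would show that every $G$-lattice $L$ admits an exact sequence $0 \to C \to P \to L \to 0$ with $P$ a permutation lattice and $C$ coflasque. To build $P$, for each subgroup $H \le G$ I pick a $\Z$-basis $y_{H,1},\dots,y_{H,n_H}$ of the finite-rank lattice $L^{H}$ and set $P := \bigoplus_{H \le G}\bigoplus_{i=1}^{n_H}\Z[G/H]$, with $P \to L$ sending the trivial coset in the $i$-th copy of $\Z[G/H]$ to $y_{H,i}$ (this is well defined since the trivial coset is $H$-fixed and $y_{H,i}\in L^{H}$). Because $G$ has finitely many subgroups and each $L^{H}$ has finite rank, $P$ is a permutation lattice; the map is already surjective on the summands with $H=1$ (whose image contains a $\Z$-basis of $L$), so its kernel $C$ is a subgroup of the finitely generated free abelian group $P$, hence a $G$-lattice. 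To see $C$ is coflasque, I fix $K \le G$ and use the exact sequence
\[
\widehat{H}^{0}(K,P)\xrightarrow{\ \alpha\ }\widehat{H}^{0}(K,L)\to\widehat{H}^{1}(K,C)\to\widehat{H}^{1}(K,P).
\]
Restricting a permutation lattice to $K$ gives a permutation lattice $\bigoplus_{j}\Z[K/K_{j}]$, and $\widehat{H}^{1}(K,\Z[K/K_{j}]) \cong \widehat{H}^{1}(K_{j},\Z) = \operatorname{Hom}(K_{j},\Z) = 0$ by Shapiro's Lemma, so $\widehat{H}^{1}(K,P) = 0$ and it suffices that $\alpha$ be surjective. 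Given $y\in L^{K}$, write $y=\sum_{i}a_{i}y_{K,i}$ in the chosen basis of $L^{K}$; then $\sum_{i}a_{i}\cdot(\text{trivial coset in the }i\text{-th }\Z[G/K])$ is a $K$-fixed element of $P$ whose class maps to that of $y$ under $\alpha$. Hence $\widehat{H}^{1}(K,C) = 0$ for all $K$, i.e.\ $C$ is coflasque, and this sequence is the desired resolution.

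\emph{Flasque resolutions of the first type.} Next I would invoke the exact anti-equivalence $L\mapsto L^{\vee}$, which carries permutation lattices to permutation lattices (since $\Z[X]^{\vee}\cong\Z[X]$ for a finite $G$-set $X$) and coflasque lattices to flasque lattices, with $L^{\vee\vee}\cong L$. Applying the previous step to $L^{\vee}$ gives $0 \to C \to P \to L^{\vee}\to 0$; dualizing yields $0 \to L \to P^{\vee}\to C^{\vee}\to 0$ with $P^{\vee}$ a permutation lattice and $C^{\vee}$ flasque, a flasque resolution of the first type.

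\emph{The second-type resolutions.} Finally, for $L$ I would take a flasque resolution of the first type $0 \to L \to P\xrightarrow{\pi}F \to 0$ and a coflasque resolution of the first type $0 \to C \to Q\xrightarrow{\rho}F \to 0$ of $F$, and form the fibre product $E := P\times_{F}Q$ in the category of $G$-lattices. The two projections are surjective with kernels $C$ and $L$ respectively, giving exact sequences $0 \to C \to E \to P \to 0$ and $0 \to L \to E \to Q \to 0$. Permutation lattices are coflasque (the same Shapiro computation as above), so in the first sequence both $C$ and $P$ are coflasque and hence so is $E$ by the long exact sequence in $\widehat{H}^{1}$; the second sequence is then a coflasque resolution of $L$ of the second type. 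Dualizing a second-type coflasque resolution of $L^{\vee}$ then produces a second-type flasque resolution of $L$, finishing all four cases. I expect the construction in the first step to be the crux: the point is to force $\widehat{H}^{0}(K,P)\to\widehat{H}^{0}(K,L)$ to be surjective for every subgroup $K$ simultaneously, and taking one copy of $\Z[G/H]$ per basis vector of $L^{H}$ across all $H$ is exactly what achieves this.
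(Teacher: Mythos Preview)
Your proof is correct and follows essentially the same route as the paper: the construction of the first-type coflasque resolution via $\bigoplus_{H}\Z[G/H]^{n_H}\to L$ is identical, and your fibre-product argument for the second type is exactly the paper's pullback diagram. The only cosmetic difference is that the paper notes the extension $0\to C\to E\to P\to 0$ actually splits (since $\operatorname{Ext}^1_{\Z G}(P,C)=0$ for $P$ permutation and $C$ coflasque), whereas you deduce coflasqueness of $E$ directly from the long exact sequence---both arguments are fine and rest on the same Shapiro-type vanishing.
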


\begin{proof}
A proof can be found in \cite[0.6]{Colliot-Thelene.Sansuc87},
but we sketch the argument here since the ideas are so closely related
to the material on Mackey functors below.

First, we observe that it suffices to prove the existence for coflasque
resolutions and then the flasque resolutions will follow by duality.

For a coflasque resolution of the first type, we simply need to construct a
permutation lattice $P$ and a morphism $P \to L$ such that
$H^0(H,P) \to H^0(H,L)$ is surjective for all subgroups $H \le G$.
This can be constructed by observing that for each subgroup $H$,
a surjection
$\mathbb{Z}^{n_H} \to L^H$ of abelian groups corresponds to a morphism
$\mathbb{Z}[G/H]^{n_H} \to L$.
Our desired surjection is therefore
\[
\bigoplus_{H \le G} \mathbb{Z}[G/H]^{n_H} \to L .
\]

For coflasque resolutions of the second type,
we have the following commutative diagram with exact rows and columns: 
\begin{equation} \label{eq:cofl2_construction}
\xymatrix{
 & & 0 \ar[d] & 0 \ar[d] & \\
 &  & B \ar[d] \ar@{=}[r] & B \ar[d] & \\
0 \ar[r] & L \ar@{=}[d] \ar[r] & C \ar[d] \ar[r] & Q \ar[d] \ar[r] & 0 \\
0 \ar[r] & L \ar[r] & P \ar[d] \ar[r] & F \ar[d] \ar[r] & 0 \\
 & & 0 & 0 &
}
\end{equation}
where $F$ is flasque, $B$ and $C$ are coflasque, and $P$ and $Q$ are a
permutation lattices.
This is constructed by first taking a flasque resolution of the first
type of $L$ to obtain $P$ and $F$, then taking a coflasque resolution of the
first type of $F$ to obtain $Q$ and $B$, then finally constructing $C$ by
pullback.
Since $B$ is coflasque, $\operatorname{Ext}^1_{\mathbb{Z}G}(P,B)=0$
and we conclude that $C \cong P \oplus B$ is coflasque as well.
\end{proof}

The flasque and coflasque resolutions are never unique, but their
stable permutation equivalence classes are unique
(see \cite[0.6]{Colliot-Thelene.Sansuc87}).
However, the stable permutation equivalence classes of a
flasque/coflasque lattice from different types of resolutions are
\emph{not} necessarily stably permutation equivalent.
Nevertheless, we do have the following:

\begin{lemma} \label{lem:fl_cofl_compare}
Let $L$ be a $G$-lattice.
If the flasque/coflasque lattice from one of the resolutions in
Definition~\ref{def:fl_cofl_res} is invertible,
then the flasque/coflasque lattice in every other resolution is invertible. 
\end{lemma}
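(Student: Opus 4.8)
The plan is to sort the four resolutions into equivalence classes by a couple of purely formal manipulations, and then to bridge the remaining gap with one genuinely homological input. I will use repeatedly the following closure properties of invertible lattices: they are closed under direct sums and under $(-)^{\vee}$, they contain the permutation lattices, they are simultaneously flasque and coflasque (being direct summands of permutation lattices, whose relevant Tate cohomology vanishes), and whether a lattice is invertible depends only on its stable permutation equivalence class. Dual to the vanishing $\operatorname{Ext}^1_{\mathbb{Z}G}(P,C)=0$ ($P$ permutation, $C$ coflasque) used in the previous proof, I will also use $\operatorname{Ext}^1_{\mathbb{Z}G}(F,P)=0$ for $F$ flasque and $P$ permutation (apply the anti-equivalence $(-)^{\vee}$). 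Since the stable permutation equivalence class of the flasque/coflasque lattice is well defined within each type and invertibility is a stable-equivalence invariant, it suffices to show: if for one type there is a resolution of $L$ whose flasque/coflasque lattice is invertible, then for each of the other three types there is at least one such resolution.

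The first move is absorption. Given a first-type flasque resolution $0\to L\to P\to F\to 0$ with $F$ invertible, choose $F^{\ast}$ with $F\oplus F^{\ast}$ permutation; then $0\to L\to P\oplus F^{\ast}\to F\oplus F^{\ast}\to 0$ is a second-type coflasque resolution (the middle term $P\oplus F^{\ast}$ is invertible, hence coflasque, and the last term is permutation) whose coflasque lattice is invertible, and conversely the same trick turns a second-type coflasque resolution with invertible lattice back into a first-type flasque one; the flasque/coflasque-interchanged version of this argument links the second-type flasque resolutions with the first-type coflasque resolutions. Writing $\mathrm{fl}_i(L)$ (resp. $\mathrm{cofl}_i(L)$) for the assertion that a type-$i$ flasque (resp. coflasque) resolution of $L$ has invertible flasque (resp. coflasque) lattice, this yields $\mathrm{fl}_1(L)\Leftrightarrow\mathrm{cofl}_2(L)$ and $\mathrm{fl}_2(L)\Leftrightarrow\mathrm{cofl}_1(L)$ for every $G$-lattice $L$. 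The second move is duality: dualizing a type-$i$ flasque resolution of $L$ gives a type-$i$ coflasque resolution of $L^{\vee}$ and vice versa, and invertibility is preserved, so $\mathrm{fl}_i(L)\Leftrightarrow\mathrm{cofl}_i(L^{\vee})$. Thus the four conditions collapse to at most two classes, $\{\mathrm{fl}_1,\mathrm{cofl}_2\}$ and $\{\mathrm{fl}_2,\mathrm{cofl}_1\}$, and it remains only to merge them, i.e.\ to prove $\mathrm{fl}_1(L)\Leftrightarrow\mathrm{cofl}_1(L)$ (equivalently, via duality, $\mathrm{fl}_1(L)\Leftrightarrow\mathrm{fl}_1(L^{\vee})$: the invertibility of the flasque class of $L$ is self-dual).

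This last equivalence is the main obstacle: it is not formal, since — as noted just above the lemma — the flasque and coflasque lattices coming from first-type resolutions of the same $L$ lie in genuinely different stable classes, so no amount of absorbing or dualizing connects them. To cross over, I would take a first-type flasque resolution $0\to L\to P_1\to F\to 0$ and a first-type coflasque resolution $0\to C\to P_2\to L\to 0$, splice them into the four-term exact sequence $0\to C\to P_2\to P_1\to F\to 0$, and then — in the spirit of the diagram \eqref{eq:cofl2_construction} — interpolate a flasque resolution of $C$ and a coflasque resolution of $F$, forming the relevant pushouts and pullbacks and using the two $\operatorname{Ext}^1$-vanishings to peel off permutation summands, so as to relate the invertibility of $C$ to that of $F$ through short exact sequences one of whose ends can be made invertible and hence which split. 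The delicate part, and the real content of the lemma, is the bookkeeping in that auxiliary diagram which certifies that the flasque and coflasque lattices produced along the way sit in the intended stable classes; once this is in place, running it together with the absorption and duality moves propagates invertibility to the flasque/coflasque lattice of every one of the four resolutions.
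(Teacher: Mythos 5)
Your formal moves — absorption (adding a complement $F^\ast$ of an invertible $F$ so that $F\oplus F^\ast$ is permutation) and duality — are correct, and they match in spirit what the paper does with the diagram \eqref{eq:cofl2_construction} and the observation that invertibility is self-dual. In fact your absorption trick is arguably cleaner than the paper's diagram for establishing $\mathrm{fl}_1(L)\Leftrightarrow\mathrm{cofl}_2(L)$ and $\mathrm{fl}_2(L)\Leftrightarrow\mathrm{cofl}_1(L)$: you do not even need the $\operatorname{Ext}^1$-vanishing, only that a direct summand of a permutation lattice is both flasque and coflasque. You are also entirely right that duality relates $\mathrm{fl}_i(L)$ to $\mathrm{cofl}_i(L^{\vee})$, not to $\mathrm{cofl}_i(L)$, so that absorption together with duality closes into exactly two cycles, $\{\mathrm{fl}_1(L),\mathrm{cofl}_2(L),\mathrm{fl}_2(L^{\vee}),\mathrm{cofl}_1(L^{\vee})\}$ and $\{\mathrm{fl}_2(L),\mathrm{cofl}_1(L),\mathrm{fl}_1(L^{\vee}),\mathrm{cofl}_2(L^{\vee})\}$, without any apparent bridge between them. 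This is a genuine and nontrivial observation: the paper's own proof, consisting of the sentence ``the result now follows by duality and the diagram \eqref{eq:cofl2_construction},'' produces precisely these same two cycles (the diagram gives $\mathrm{fl}_1\Leftrightarrow\mathrm{cofl}_2$, the dualized diagram gives $\mathrm{cofl}_1\Leftrightarrow\mathrm{fl}_2$), so your analysis in effect uncovers that the paper's proof is at best very terse about the remaining link. Note, however, that in the only place the paper invokes the lemma (the proof of Theorem~\ref{thm:main_existence_relative}), it only needs $\mathrm{fl}_1(L)\Rightarrow\mathrm{cofl}_2(L)$, which both your argument and the diagram do deliver.

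The problem with your proposal is that the one step you yourself flag as ``the real content of the lemma'' — merging the two cycles, i.e.\ proving $\mathrm{fl}_1(L)\Leftrightarrow\mathrm{cofl}_1(L)$, equivalently $\mathrm{fl}_1(L)\Leftrightarrow\mathrm{fl}_1(L^{\vee})$ — is not actually carried out. The last paragraph gestures at splicing a type-1 flasque and a type-1 coflasque resolution into a four-term sequence, interpolating resolutions of the two ends, and ``forming the relevant pushouts and pullbacks,'' but it does not exhibit a short exact sequence (or a chain of them) that actually connects the stable class of $F$ to the stable class of $C$. If you run the construction you describe, you produce, on the one side, $\mathrm{cofl}_2(L)\cong P\oplus B$ with $B$ a coflasque lattice of $\mathrm{fl}_1(L)$, and on the other side $\mathrm{fl}_2(L)\cong P'\oplus F'$ with $F'$ a flasque lattice of $\mathrm{cofl}_1(L)$; both of these lie inside a single cycle and do not cross over, so the ``bookkeeping'' in fact lands you back where you started. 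A correct bridge requires an input beyond $\operatorname{Ext}^1$-vanishing and pushout/pullback diagrams — for instance the standard characterization that $\rho(L)$ is invertible if and only if certain maps $H^1(H,L)\to H^1(H',L)$ or $H^2$-classes vanish functorially, which is manifestly self-dual, or a direct argument via Schanuel-type splicing that explicitly produces an isomorphism $F\oplus(\text{perm.})\cong C^{\vee}\oplus(\text{perm.})$. As written, the proposal leaves the hardest implication as an assertion, so the proof is incomplete.
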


\begin{proof}
A lattice is permutation (resp. invertible) if and only its dual lattice
is permutation (resp. invertible).
The result now follows by duality and
the diagram \eqref{eq:cofl2_construction}.
\end{proof}

\section{Algebraic tori}
\label{sec:tori}

Throughout the following $k$ is a field, $k^s$ is its separable closure
and $\Gamma_k$ is its absolute Galois group, which is a profinite group.
Let $K$ be a finite Galois field
extension of $k$ with Galois group $G$.
We assume throughout that all algebras are associative and unital.

\subsection{\'Etale algebras}
\label{ssec:etale}

See \cite[\S{18}]{knus1998book} for more on this material.
An \emph{\'etale $k$-algebra} $E$ is a direct product 
\[
E = F_1 \times \cdots \times F_m
\]
where $F_1, \ldots, F_m$ are finite separable field extensions of $k$.
An \'etale $k$-algebra $E$ is \emph{split} if $E \cong k^n$
for some non-negative integer $n$.
An \'etale $k$-algebra is \emph{split by $K/k$} if $E\otimes_k K$ is
split.

There is a contravariant equivalence between the category of finite
discrete $\Gamma_k$-sets and the category of \'etale $k$-algebras.
Similarly, there is a contravariant equivalence between the category of
finite $G$-sets and the category of \'etale $k$-algebras split by $K$.
In both cases, transitive orbits correspond to fields.

\subsection{Tori}
\label{ssec:tori}

We recall standard results from, e.g.,
\cite{MR1634406} or \cite{Duncan2016}.

Let $\mathbb{G}_m$ be the multiplicative group scheme of the field $k$.
A \emph{(algebraic) torus} over $k$ is an algebraic group $T$ such that
$T_{k^s} := T \times_{\operatorname{Spec}(k)}
\operatorname{Spec}(k^s) \cong \mathbb{G}_{m,k^s}^n$ for some
non-negative integer $n$.
A torus is \emph{split} if $T \cong \mathbb{G}_m^n$ already over the
base field $k$.

A torus is \emph{split by $K$} if $T_K \cong \mathbb{G}_{m,K}^n$ for some
non-negative integer $n$ and some field extension $K$.
For every torus $T$, there is a unique minimal field extension $K/k$
such that $T$ is split by $K$.  We call this minimal field extension the
\emph{splitting field of $T$}.  The splitting field is always finite
Galois.

Every torus $T$ has an associated \emph{character lattice} $\chars(T)$,
which is a discrete $\Gamma_k$-lattice.
Conversely, every discrete $\Gamma_k$-lattice $L$ has an associated torus
$\torus(L)$.
There is an exact anti-equivalence between the category of discrete
$\Gamma_k$-lattices and the category of tori over $k$.
This restricts to an exact anti-equivalence between the category of
$G$-lattices and the category of $k$-tori split by $K$.
Since every torus has a finite Galois splitting field,
the action of $\Gamma_k$ on its character lattice
always factors through a finite group.

\begin{definition}
Let $T$ be a torus with character lattice $M:= \chars(T)$.
\begin{enumerate}
\item $T$ is \emph{quasi-trivial} if $M$ is permutation.
\item $T$ is \emph{special} if $M$ is invertible.  
\item $T$ is \emph{flasque} if $M$ is flasque. 
\item $T$ is \emph{coflasque } if $M$ is coflasque. 
\end{enumerate} 
\end{definition}

We have the following standard fact
(see \cite{huruguen2016special} and \cite{Merku}):

\begin{prop} \label{prop:special}
A torus $T$ is special if and only if $H^1(F,T_F)$ is trivial for every
(not necessarily algebraic) field extension $F/k$.
\end{prop}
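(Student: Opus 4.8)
The plan is to prove both directions by reducing to permutation lattices and applying Shapiro's Lemma. For the forward direction, suppose $T$ is special, so its character lattice $M$ is invertible, meaning there is a $G$-lattice $N$ and a permutation lattice $P$ with $M \oplus N \cong P$. For any field extension $F/k$, base change to $F$ and note that $K_F := K \otimes_k F$ (or rather the compositum, after passing to a field factor) is a Galois extension of $F$ with Galois group a subgroup $H \le G$; the character lattice of $T_F$ is $M$ regarded as an $H$-lattice. Since invertibility is inherited by restriction to subgroups, $M$ is still a direct summand of a permutation $H$-lattice, so $H^1(F, T_F) = H^1(H, M)$ is a direct summand of $H^1(H, P)$. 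By Shapiro's Lemma, $H^1(H, P) = 0$ for any permutation lattice $P$, hence $H^1(F, T_F) = 0$. One subtlety: $H^1(F,T_F)$ is computed as $H^1(\Gamma_F, T_F(k^s))$, but since $T$ is split by $K$, this coincides with $H^1(H, M^\vee \text{-torus values})$; more precisely one uses the identification $H^1(F, T_F) \cong H^1(H, \chars(T_F)^{\text{dual torus}})$ via the anti-equivalence and the fact that $H^1$ of a torus split by $K$ is $H^1(G, T(K))$, which vanishes when the character lattice is permutation.

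For the converse, suppose $H^1(F, T_F) = 0$ for all field extensions $F/k$; we want to show $M$ is invertible. The key tool is a coflasque resolution of the first type (Definition~\ref{def:fl_cofl_res}(3)): choose a short exact sequence $0 \to C \to P \to M \to 0$ with $P$ permutation and $C$ coflasque. Dualizing, we get $0 \to M^\vee \to P^\vee \to C^\vee \to 0$ with $P^\vee$ permutation and $C^\vee$ flasque; this is a flasque resolution of $M^\vee$ of the first type. Translating to tori, this corresponds to an exact sequence of tori $1 \to S \to Q \to T \to 1$ where $Q$ is quasi-trivial (character lattice $P$) and $S$ is a coflasque torus. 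Taking the long exact sequence in Galois cohomology over any $F$, and using $H^1(F, Q_F) = 0$ (Shapiro, since $Q$ is quasi-trivial) together with the hypothesis $H^1(F, T_F) = 0$, we should extract that $H^2(F, S_F) \hookrightarrow H^2(F, Q_F)$, or better, chase to conclude that the connecting map forces a splitting. Actually the cleanest route: a coflasque torus $S$ with $H^1(F, S_F) = 0$ for all $F$ — or the relevant vanishing derived from the hypothesis — is special, and then invertibility of the character lattice of $S$ combined with the resolution and Lemma~\ref{lem:fl_cofl_compare} propagates invertibility back to $M$.

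The main obstacle I anticipate is the converse direction, specifically pinning down exactly which cohomological vanishing for the auxiliary coflasque torus $S$ follows from the hypothesis on $T$, and then deducing that $S$ (equivalently $C$) is invertible rather than merely coflasque. The bridge is a standard lemma: a coflasque lattice $C$ is invertible if and only if $H^1(H, C^\vee) = 0$ (equivalently $\widehat H^{-1}$ vanishes on all subgroups in the appropriate sense) for all $H \le G$, which via Shapiro and the resolution is controlled by the splitting of $0 \to C \to P \to M \to 0$; the hypothesis that $H^1$ of $T$ vanishes over the function fields of the relevant varieties (or all field extensions, as stated) is precisely what forces every such extension to split after restriction to every subgroup, giving $\operatorname{Ext}^1_{\mathbb{Z}H}(M, C) $-type vanishing and hence that $M$ is a summand of $P$. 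Once invertibility of one flasque/coflasque lattice in one resolution is established, Lemma~\ref{lem:fl_cofl_compare} does the rest. I would cite \cite{huruguen2016special} and \cite{Merku} for the precise form of the function-field reduction if the "all field extensions" hypothesis needs to be leveraged via a single universal torsor.
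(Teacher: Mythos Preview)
The paper does not give its own proof of this proposition; it is stated as a standard fact with citations to \cite{huruguen2016special} and \cite{Merku}, so there is no argument in the paper to compare your proposal against directly.

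On your sketch itself: the forward direction is correct in spirit (if $M$ is invertible then $T$ is a direct factor of a quasi-trivial torus, whose $H^1$ vanishes over every field by Hilbert~90 and Shapiro), though your write-up conflates $H^1(H,M)$ with $H^1(F,T_F)$, which are different objects. The converse is where the content lies, and your outline has real gaps. First, the contravariance is wrong: applying $\mathcal{D}$ to $0 \to C \to P \to M \to 0$ yields $1 \to T \to Q \to \mathcal{D}(C) \to 1$, not the sequence $1 \to S \to Q \to T \to 1$ you wrote; that shape would instead come from a flasque resolution of $M$ of the first type. Second, your proposed ``bridge lemma''---that a lattice which is both flasque and coflasque must be invertible---is not established in the paper, is not obvious, and is not how the cited references proceed. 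The actual argument uses a versal $T$-torsor over a suitable $k$-variety and shows that vanishing of $H^1$ over its function field forces the relevant extension to split. You correctly flag this in your final sentence, so in effect your converse reduces to deferring to the same references the paper already cites.
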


Given a $G$-set $X$, we obtain a permutation $G$-lattice
$\mathbb{Z}[G]$.
Thus, given an \'etale $k$-algebra $E$, we obtain a
quasi-trivial torus, namely the \emph{Weil restriction}
$\mathbb{R}_{E/k} \mathbb{G}_m$.

\subsection{Brauer groups and Galois cohomology}

Let $T$ be a $k$-torus with splitting field $K/k$
and Galois group $G$.
We have Galois cohomology groups
\[
H^i(k,T) = H^i(\Gamma_k,T(k_s))
\textrm{ and }
H^i(K/k,T) = H^i(G,T(K)) .
\]
In particular, we have cohomological descriptions of
the Brauer group
\[
\Br(k) \cong H^2(k,\mathbb{G}_m) = H^2(\Gamma_k,k_s^\times),
\]
and the relative Brauer group
\[
\Br(K/k) \cong H^2(K/k,\mathbb{G}_m) = H^2(G,K^\times).
\]
We use the notation $\rBr(k) := \Br(K/k)$ for the relative Brauer group
whenever the overfield $K$ is clear.

Recall some standard consequences of Hilbert's Theorem 90
and Shapiro's Lemma:

\begin{prop}[\cite{BDLM}]
Let $X$ be a $G$-set of cardinality $n$ with orbits $X_1, \ldots, X_r$.
Let $E= F_1 \times \cdots \times F_r$ be the corresponding
\'etale $k$-algebra of degree $n$,
with field extensions $F_1, \ldots, F_r$.
Let $T=\mathbb{R}_{E/k} \mathbb{G}_m$ be the
corresponding quasitrivial torus.
Then 
\begin{enumerate}
    \item $\chars(T) \cong \mathbb{Z}[X]$,
    \item $T(k) \cong E^{\times}$,
    \item $H^1(k,T)=0$, and
    \item $H^2(k,T)=\prod_{i=1}^{r}\Br(F_i)$.
\end{enumerate}
\end{prop}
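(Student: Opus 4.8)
The plan is to reduce everything to the case of a transitive $G$-set and then read off all four assertions from the universal property of the Weil restriction together with Shapiro's lemma and Hilbert's Theorem~90. First, observe that all the constructions in play are compatible with finite products: writing $X = X_1 \sqcup \cdots \sqcup X_r$ one has $\Z[X] = \bigoplus_i \Z[X_i]$, $E = \prod_i F_i$, $\mathbb{R}_{E/k}\mathbb{G}_m \cong \prod_i \mathbb{R}_{F_i/k}\mathbb{G}_m$, and group cohomology commutes with finite direct sums of coefficient modules. Hence it suffices to treat $X = G/H$ transitive, $F = K^H$, and $T = \mathbb{R}_{F/k}\mathbb{G}_m$; the general statement then follows by taking products.

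Next, pin down the $\Gamma_k$-module $T(k^s)$. By the universal property of the Weil restriction, for every commutative $k$-algebra $A$ one has $T(A) = \mathbb{G}_m(A\otimes_k F) = (A\otimes_k F)^\times$; taking $A = k$ already gives $T(k) = F^\times$, hence assertion (2) after reassembling the product. Taking $A = k^s$, the \'etale $k$-algebra $F$ splits: $k^s\otimes_k F \cong \prod_{\sigma}k^s$, where $\sigma$ runs over the set $\Sigma$ of $k$-embeddings $F\hookrightarrow k^s$, and $\Gamma_k$ acts by permuting the factors through its transitive action on $\Sigma$; a choice of base point identifies $\Sigma$ with $\Gamma_k/\Gamma_F$. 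Therefore
\[
T(k^s) \;\cong\; \prod_{\Gamma_k/\Gamma_F}(k^s)^\times \;\cong\; \operatorname{Coind}_{\Gamma_F}^{\Gamma_k}\bigl((k^s)^\times\bigr)
\]
as $\Gamma_k$-modules, where on the right $(k^s)^\times$ carries its natural $\Gamma_F$-action ($k^s$ being also a separable closure of $F$, as $F/k$ is separable). Dualizing, $\chars(T) = \operatorname{Hom}(T_{k^s},\mathbb{G}_{m,k^s})$ is the free abelian group on the factors of this product with $\Gamma_k$ permuting them, i.e.\ $\Z[\Gamma_k/\Gamma_F]$, whose underlying $G$-lattice is $\Z[G/H] = \Z[X]$; this is assertion (1). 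Note the $\Gamma_k$-action here factors through $G$, consistently with $T$ being split by $K$.

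Finally, invoke Shapiro's lemma: from $T(k^s) \cong \operatorname{Coind}_{\Gamma_F}^{\Gamma_k}\bigl((k^s)^\times\bigr)$ one gets $H^i(k,T) = H^i(\Gamma_k, T(k^s)) \cong H^i(\Gamma_F, (k^s)^\times) = H^i(F,\mathbb{G}_m)$ for all $i$. For $i=1$ this vanishes by Hilbert's Theorem~90, giving (3); for $i=2$ it is $\Br(F)$ by the cohomological description of the Brauer group, and reassembling the product decomposition from the first step yields $H^2(k,T) = \prod_{i=1}^r \Br(F_i)$, which is (4).

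The only step carrying genuine content is the identification of the $\Gamma_k$-module structure of $T(k^s)$ — concretely the splitting $k^s\otimes_k F \cong \prod_{\sigma}k^s$ and the fact that $\Gamma_k$ permutes the factors via embeddings $F\hookrightarrow k^s$; the product reduction, Shapiro's lemma, Hilbert~90, and the definition of the Brauer group are all standard. Two minor bookkeeping points to keep track of are that the co-induced and induced modules coincide here because $[\Gamma_k:\Gamma_F]$ is finite, and that passing from the permutation $\Gamma_k$-lattice $\Z[\Gamma_k/\Gamma_F]$ to the $G$-lattice $\Z[G/H]$ is harmless since the action already factors through $G$.
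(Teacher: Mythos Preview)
Your proof is correct and matches the paper's own treatment: the paper does not actually prove this proposition but simply recalls it as ``standard consequences of Hilbert's Theorem~90 and Shapiro's Lemma'' with a citation to \cite{BDLM}, and your argument is precisely the expected elaboration of those two ingredients together with the universal property of Weil restriction.
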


In view of the preceding, we will use the following shorthand notations.
Given a $G$-set $X$ with \'etale $k$-algebra $E$, we write
\begin{enumerate}
\item $\torus(X) := \torus(\mathbb{Z}[X])$,
\item $\Br(E) = \Br(X) := H^2(k,\torus(X))$, and
\item $\rBr(E) = \rBr(X) := H^2(K/k,\torus(X))$.
\end{enumerate}

\subsection{Retract rationality}
\label{ssec:rrat}

We recall some standard terminology regarding variants of rationality
studied in algebraic geometry:
\begin{definition}
Let $X$ be a $k$-variety.
\begin{itemize}
\item $X$ is \emph{rational} if $X$ is birationally
equivalent to $\mathbb{A}_k^n$ for some $n\geq 0$.
\item $X$ is \emph{stably rational } if
$X \times \mathbb{A}_k^m $ is rational for some $m \ge 0$.
\item $X$ is \emph{retract rational} if there exists an integer $n$,
a dominant rational map
$f:\mathbb{A}_k^n \dasharrow X$ and a rational map $s: X
\dasharrow \mathbb{A}_k^n $ such that $f\circ s$ is the identity on
$X$.
\end{itemize}
\end{definition}

\begin{theorem}
\label{thm:retrat_inv}
Let $T$ be a $k$-torus with character lattice $M:=\chars(T)$,
and suppose 
\[
1\rightarrow M\rightarrow P \rightarrow F \rightarrow 1
\]
is a flasque resolution of the first type. 
\begin{enumerate}
\item $T$ is stably rational if and only if $F$ is stably permutation. 
\item $T$ is retract rational if and only if $F$ is invertible.
\end{enumerate}
\end{theorem}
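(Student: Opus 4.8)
The plan is to move everything to the category of tori via the anti-equivalence recalled in Section~\ref{sec:tori}, prove the two ``if'' directions by a direct geometric argument built on the no-name lemma and Proposition~\ref{prop:special}, and obtain the ``only if'' directions from the birational invariance of the stable class of the flasque lattice. Applying the anti-equivalence to the flasque resolution $0 \to M \to P \to F \to 0$ produces a short exact sequence of $k$-tori
\[
1 \longrightarrow \torus(F) \longrightarrow \torus(P) \longrightarrow T \longrightarrow 1 ,
\]
in which $\torus(P)$ is quasitrivial and $\torus(F)$ is flasque. A quasitrivial torus is an open subvariety of affine space (it is a Weil restriction of $\mathbb{G}_m$ inside the corresponding Weil restriction of $\mathbb{A}^1$), hence rational; and the surjection $\torus(P) \to T$ makes $\torus(P)$ a torsor under $\torus(F)$ over $T$.

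For the ``if'' directions: in case (2) the lattice $F$ is invertible by hypothesis, while in case (1) we may first replace the resolution by $0 \to M \to P\oplus P_1 \to F\oplus P_1 \to 0$ for a permutation lattice $P_1$ with $F\oplus P_1$ permutation — possible because $F$ is stably permutation — so that (keeping the names $P$ and $F$) the flasque lattice is now permutation, in particular invertible. In either case $\torus(F)$ is special, so by Proposition~\ref{prop:special} we have $H^1\!\bigl(k(T),\torus(F)_{k(T)}\bigr)=0$; the generic fibre of the torsor $\torus(P)\to T$ is therefore trivial, i.e. the torsor has a rational section, and so $\torus(P)$ is birational to $T\times\torus(F)$. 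In case (1), $\torus(F)$ is quasitrivial, hence rational, so $T\times\mathbb{A}^{\dim F}$ is birational to the rational variety $\torus(P)$, and $T$ is stably rational. In case (2), composing the projection $T\times\torus(F)\to T$ with the section $t\mapsto(t,e)$ exhibits $T$ as a birational retract of $T\times\torus(F)$, hence of the rational variety $\torus(P)$, which is exactly retract rationality of $T$.

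For the ``only if'' directions one invokes the theory of \cite{Colliot-Thelene.Sansuc87} (and, for the refinement to retract rationality, \cite{Saltman1984}): by the uniqueness of flasque resolutions up to stable permutation equivalence recalled in Section~\ref{sec:coflasque}, the stable class $[F]$ is a well-defined invariant of $T$, and the point is that it is a stable birational invariant, while the (stable-invariant) property ``$F$ is invertible'' is a retract-birational invariant. Concretely, for a smooth projective compactification $X\supseteq T$ the $G$-lattice $\operatorname{Pic}(X_{k^s})$ is flasque and lies in the class $[F]$ — this follows from the exact sequence relating the divisors supported on $X\setminus T$, $\operatorname{Pic}(X_{k^s})$ and the character lattice $M$, compared with a flasque resolution — and the Picard lattice of a smooth projective geometrically rational $k$-variety changes only by permutation direct summands under blow-ups, hence is unchanged up to stable permutation equivalence under (stable, resp. retract) birational maps. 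Granting this, stable rationality of $T$ forces $[F]=[\operatorname{Pic}(\mathbb{P}^n_{k^s})]=0$, i.e. $F$ stably permutation, and retract rationality of $T$ forces $[F]$ to contain an invertible lattice, whence $F$ itself is invertible. The main obstacle is precisely this last input — the (retract) birational invariance of the flasque class — which is the technical heart of \cite{Voskresenskiĭ_1974,Saltman1984,Colliot-Thelene.Sansuc87} and relies on the passage through smooth compactifications and the behaviour of Picard groups under birational maps; the ``if'' directions, by contrast, are essentially formal once the no-name lemma and Proposition~\ref{prop:special} are available.
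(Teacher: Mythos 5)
This theorem is not proved in the paper: it is stated with citations to Voskresenski\u{\i} and Saltman, and the body of the paper takes it as a black box. Your sketch is a correct and faithful reconstruction of the standard argument, so there is no ``paper's proof'' to compare against; I will instead comment on the internal soundness of your write-up.

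Your two ``if'' directions are essentially complete and correctly organized. Dualizing $0\to M\to P\to F\to 0$ to $1\to\torus(F)\to\torus(P)\to T\to 1$, observing that $\torus(P)$ is rational (open in the affine space $R_{E/k}(\mathbb{A}^1)$), and using Proposition~\ref{prop:special} together with the triviality of $H^1(k(T),\torus(F))$ to split the torsor generically, so that $\torus(P)$ is birational to $T\times\torus(F)$, is exactly the right mechanism. The reduction in case (1) to $F$ actually permutation by adding a permutation summand to both $P$ and $F$ is legitimate. The only nit is that in case (2) you should make explicit that a retract of a variety birational to $\mathbb{A}^n$ is a retract of $\mathbb{A}^n$ (composing the rational maps through the birational isomorphism); you gesture at this but a referee would want the sentence.

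For the ``only if'' directions you correctly identify the real content: the flasque class $[F]$ is recovered as $[\operatorname{Pic}(X_{k^s})]$ for a smooth projective model $X$ of $T$, and one must show this class is a (stable, resp.\ retract) birational invariant. You are honest that this is where the weight of \cite{Voskresenskiĭ_1974,Saltman1984,Colliot-Thelene.Sansuc87} lies, and your derivations from it (that $F$ stably permutation follows from $[F]=0$, and that $F$ invertible follows from $[F]$ containing an invertible lattice because a flasque direct summand of a permutation lattice is invertible) are both correct. One caveat worth flagging: the phrase ``changes only by permutation summands under blow-ups'' suffices for the stably-rational case, but Saltman's retract-rational case does not proceed by factoring a retract-birational map into blow-ups; it uses a different argument (lifting through a projective cover / a local-global argument over the generic point). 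Since you explicitly defer this to the references, the gap is declared rather than hidden, which is acceptable for a statement the paper itself only cites.

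Overall: correct, with the hard input appropriately attributed; matches the standard treatment rather than anything special to this paper.
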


\begin{proof}
The stable rationality statement is essentially due to
Voskresenski\u{i}~\cite[Theorem 2]{Voskresenskii_1974}.  
The retract rationality statement is due to
Saltman~\cite[Theorem 3.14]{Saltman1984}.
\end{proof}

\section{Proof of the Existence Theorem}
\label{sec:existence_proof}

Here we prove Theorem~\ref{thm:main_existence},
which is an immediate consequence of the following stronger result.

\begin{theorem} \label{thm:main_existence_relative}
Suppose $T$ is retract rational.
There exist $G$-sets $X_0$ and $X_1$ and a $G$-module homomorphism
$\psi : \mathbb{Z}[X_1] \to \mathbb{Z}[X_0]$ with isomorphisms
\begin{align}
H^1(k, T) \cong&
\operatorname{ker}\left(\psi^\ast : \Br(X_0)\to \Br(X_1)\right)
\label{eq:abs_map}
\\
\cong&
\operatorname{ker}\left(\psi^\ast : \rBr(X_0)\to \rBr(X_1)\right)
\label{eq:rel_map} .
\end{align}
\end{theorem}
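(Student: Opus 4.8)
The plan is to produce the $G$-sets $X_0, X_1$ and the map $\psi$ directly from a coflasque resolution of the first type of the character lattice $M = \chars(T)$, and then to identify the kernel of $\psi^\ast$ with $H^1(k,T)$ using the long exact sequences in Galois cohomology together with Hilbert 90. First I would fix a splitting field $K/k$ with Galois group $G$ and write $M = \chars(T)$. By the lemma on existence of resolutions, choose a coflasque resolution of the first type
\[
0 \to C \to P_0 \to M \to 0 ,
\]
with $P_0$ permutation and $C$ coflasque. Because $T$ is retract rational, Theorem~\ref{thm:retrat_inv} says the flasque lattice in a flasque resolution is invertible; by Lemma~\ref{lem:fl_cofl_compare} and duality, $C$ is then an invertible coflasque lattice. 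Next, take a coflasque resolution of the first type of $C$ itself,
\[
0 \to C' \to P_1 \to C \to 0 .
\]
Splicing these two gives an exact sequence $0 \to C' \to P_1 \xrightarrow{\psi'} P_0 \to M \to 0$. Set $X_0, X_1$ to be the $G$-sets with $\mathbb{Z}[X_0] = P_0$, $\mathbb{Z}[X_1] = P_1$, and let $\psi : \mathbb{Z}[X_1] \to \mathbb{Z}[X_0]$ be the composite $P_1 \to C \hookrightarrow P_0$, i.e.\ $\psi = \psi'$.

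Now I would dualize everything, since the tori $\torus(X_i) = \torus(\mathbb{Z}[X_i])$ correspond to $\mathbb{Z}[X_i]^\vee$ under the anti-equivalence, and applying $\torus(-)$ turns the four-term exact sequence of lattices into a four-term exact sequence of tori. Breaking this into two short exact sequences of tori and taking the long exact Galois cohomology sequences, I get (writing $Q_i := \torus(X_i)$ and using that these are quasitrivial, so $H^1(k, Q_i) = 0$ and $H^2(k, Q_i) = \Br(X_i)$ by the Proposition of Section~2.3) a connecting map that identifies $H^1(k,T)$ with a subquotient of $\Br(X_0)$. Concretely: from $0 \to C \to P_0 \to M \to 0$ we dualize to $1 \to T \to Q_0' \to S \to 1$ for an appropriate torus $S$ with $\chars(S) = C^\vee$; since $C$ is coflasque its dual $C^\vee$ is flasque, and one shows $H^1(k, S)$ is controlled by the second resolution. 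The key point is that exactness of $\mathbb{Z}[X_1] \xrightarrow{\psi} \mathbb{Z}[X_0] \to M \to 0$ (on the level of the relevant Mackey/cohomology functors) translates, after dualizing and applying $H^1, H^2$, into exactness of
\[
0 \to H^1(k, T) \to \Br(X_0) \xrightarrow{\psi^\ast} \Br(X_1) ,
\]
which is \eqref{eq:abs_map}. For \eqref{eq:rel_map}, I would use that every class in $H^1(k,T)$, being split by $K$, already lives in $H^1(K/k, T(K))$, and likewise $H^2(G, K^\times) = \rBr(k)$ captures the relevant part of $\Br(k)$; running the same long exact sequence argument with $G$-cohomology over $K$ in place of $\Gamma_k$-cohomology gives the relative statement, and the two kernels agree because the comparison maps $\rBr(X_i) \to \Br(X_i)$ are compatible with $\psi^\ast$ and injective on the relevant subgroups.

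The main obstacle I anticipate is verifying the exactness at $\Br(X_0)$ precisely — i.e.\ that the image of $H^1(k,T) \to \Br(X_0)$ is exactly $\ker(\psi^\ast)$ and not merely contained in it. This requires that the connecting homomorphisms in the two long exact sequences fit together correctly, which in turn uses invertibility of $C$: one needs $H^1(k, S_F) = 0$ (or a flasque-type vanishing) for the intermediate torus so that the relevant segment of the long exact sequence degenerates. Invertibility gives this vanishing for all field extensions $F/k$ by Proposition~\ref{prop:special} once we arrange the intermediate lattice to be invertible, which is exactly why retract rationality is the hypothesis. A secondary technical point is checking that $\psi$ can be chosen so that the induced $\psi^\ast$ on Brauer groups is literally the map obtained by functoriality of $H^2(k, \torus(-))$; this is a diagram chase using the naturality of the anti-equivalence between lattices and tori and of the cohomology functors, and I would defer the careful bookkeeping to the Mackey-functor formalism of Section~\ref{sec:Mackey}, which is designed precisely to make such identifications clean.
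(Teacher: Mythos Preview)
There is a genuine gap: you chose the wrong type of coflasque resolution. With a coflasque resolution of the \emph{first} type $0 \to C \to P_0 \to M \to 0$, applying $\torus(-)$ yields $1 \to T \to \torus(P_0) \to \torus(C) \to 1$, i.e.\ $T$ is a \emph{subtorus}. The long exact sequence then gives
\[
H^0(k,\torus(C)) \to H^1(k,T) \to H^1(k,\torus(P_0)) = 0,
\]
so $H^1(k,T)$ appears only as a \emph{cokernel} of degree-zero terms, not inside any $H^2$. The vanishing $H^1(k,\torus(C))=0$ that you invoke from invertibility sits one degree too high: it yields $H^2(k,T) \hookrightarrow \Br(X_0)$, not $H^1(k,T)$. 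Your asserted exact sequence $0 \to H^1(k,T) \to \Br(X_0) \to \Br(X_1)$ therefore does not follow from the diagram you set up, and the ``obstacle you anticipate'' is in fact a non-starter, not merely a bookkeeping issue.

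The paper instead uses a coflasque resolution of the \emph{second} type, $0 \to M \to C \to P \to 0$, which dualizes to $1 \to \torus(P) \to \torus(C) \to T \to 1$ with $T$ as a \emph{quotient}. Now the connecting map lands $H^1(k,T)$ directly in $H^2(k,\torus(P)) = \Br(X_0)$, and $H^1(k,\torus(C))=0$ gives injectivity on the left. To get a permutation target on the right, the paper does not take a further surjection $P_1 \twoheadrightarrow C$ as you do (that goes the wrong way again); rather, it uses invertibility to choose a \emph{split injection} $C \hookrightarrow Q$ into a permutation lattice, so that $H^2(k,\torus(C)) \hookrightarrow H^2(k,\torus(Q)) = \Br(X_1)$ and the composite $P \to C \hookrightarrow Q$ is the required $\psi$. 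Your second resolution $0 \to C' \to P_1 \to C \to 0$ does not supply such an embedding.
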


\begin{proof}
We have a coflasque resolution of $M$ of the second type 
\begin{equation}\label{eq2}
0 \rightarrow M \rightarrow C \rightarrow P \rightarrow 0 
\end{equation}
where $C$ is invertible by Lemma~\ref{lem:fl_cofl_compare}
and Theorem~\ref{thm:retrat_inv}.
Applying duality and Galois cohomology,
we obtain the following exact sequence
\begin{equation}\label{eq3}
H^1(k, \torus(C)) \to H^1(k, \torus(M)) \to H^2(k, \torus(P)) \to
H^2(k, \torus(C))
\end{equation}
Since $C$ is invertible, $D(C)$ is special and this implies that  $H^1(k, \torus(C))=0$.

In addition, there exists a permutation lattice $Q$ such that $C$ is a
direct summand of $Q$.
Define the map $\psi: P \to Q$ as the composition of
$P \to C$ and $C \to Q$.
Moreover, the factorization of the identity
map on $C$ through $Q$ gives a factorization of the identity map 
\begin{equation}\label{eq4}
H^2(k, \torus(C)) \rightarrow H^2(k, \torus(Q)) \rightarrow
H^2(k, \torus(C)). 
\end{equation}
By combining the above equations \eqref{eq3} and \eqref{eq4}, we obtain
the following exact sequence 
\[
0 \rightarrow H^1(k, \torus(M)) \rightarrow H^2(k, \torus(P))
\rightarrow H^2(k, \torus(Q)).
\]
After making canonical identifications, this establishes the isomorphism
\eqref{eq:abs_map}.

We now show that the second isomorphism \eqref{eq:rel_map}
follows from the first \eqref{eq:abs_map}.

If $Y$ is a transitive $G$-set corresponding to a field extension
$F/k$, then we have a surjective map $\pi : \mathbb{Z}[G] \to \mathbb{Z}[Y]$
that corresponds to the restriction map in the exact sequence
\[
0 \to \rBr(F) \to \Br(F) \xrightarrow{\pi^\ast} \Br(K).
\]
Therefore,
we have the following commutative diagram with exact rows
\[
\xymatrix{
0 \ar[r] &
\rBr(X_0) \ar[r] \ar[d]^{\Psi^\ast} &
\Br(X_0) \ar[r] \ar[d]^{\Psi^\ast} &
\Br(K)^r \\
0 \ar[r] &
\rBr(X_1) \ar[r]  &
\Br(X_1) \ar[r] &
\Br(K)^s} .
\]
Since the splitting field of $T$ is $K$, the isomorphism
\eqref{eq:rel_map} follows from \eqref{eq:abs_map}.
\end{proof}

In the proof above, we could have instead established
the relative Brauer iomorphism \eqref{eq:rel_map} first,
and then used that to obtain the version for
absolute Brauer groups
\[
0 \to H^1(k, T) \to \Br(X_0) \to
\Br(X_1) \times \Br(K)^r
\]
by ensuring we split all the classes in $\operatorname{Br}(X_0)$
by $K$.
However, this process highlights that the relationship between
descriptions using absolute Brauer groups and descriptions using
relative Brauer groups is not completely trivial.
See Section~\ref{ssec:blunk_compare} for a concrete example of this phenomenon.

\section{Mackey functors}
\label{sec:Mackey}

Here we introduce the notion of a Mackey functor.
Our main reference is \cite{T-W}.
Throughout this subsection, $R$ is a commutative ring and $G$ is a finite group.

\begin{definition}
  A {\it Mackey functor} for a group $G$ over a commutative ring $R$ is a function 
\begin{align*}
    \mathcal{M}:\left\{ \textup{subgroups of $G$} \right\} \rightarrow \modcat{R}
\end{align*}
with morphisms
\setlength{\abovedisplayskip}{0pt}
\begin{align*}
I_{H}^K: \mathcal{M}(H) &\rightarrow \mathcal{M}(K)\\
R_{H}^K: \mathcal{M}(K) &\rightarrow \mathcal{M}(H)\\
C_g : \mathcal{M}(H)& \rightarrow \mathcal{M}(\leftindex^g{H})
\end{align*}
 for all subgroups $H\le K \le G$ and $g \in G$, such that
 
 \begin{itemize}
     \item $I^H_H$, $R^H_H$, $C_h: \mathcal{M}(H) \rightarrow \mathcal{M}(H)$ are identity morphisms for all subgroups $H\le G$ and $h \in H$
     
     \item $R^H_J R^K_H = R^K_J$ and $ I^K_H I^H_J = I^K_J$ for all subgroups $J \le H \le K$
     
    \item $C_g C_h = C_{gh}$ for $g, h \in G$
    
    \item $R^{\leftindex^g{K}}_{\leftindex^g{H}} C_g = C_g R^K_H$ and $I^{\leftindex^g{K}}_{\leftindex^g{H}} C_g = C_g I^K_H$ for $g\in G$ and for all subgroups $H \le K$.
    
    \item $\displaystyle{R^K_JI_H^K = \sum_{x\in [J\backslash K/ H]} I^J_{H\cap \leftindex^x\!{J}}\, C_x \,R^H_{H\cap \leftindex^x\!{J}}}$ for all subgroups $J, H \le K$.
 \end{itemize}
\end{definition}

To motivate this definition,
consider the example of the representation ring of $G$
along with the representation rings all its subgroups.
Here $I^\bullet_\bullet$ corresponds to induction,
$R^\bullet_\bullet$ corresponds to restriction, and
$C_\bullet$ corresponds to conjugation.
The last axiom above then models the Mackey Decomposition Theorem,
which explains the name ``Mackey functor.''

\begin{example}
For a $G$-module $M$, the
\emph{fixed point functor} $\operatorname{FP}_M$ is defined as follows.
For each subgroup $H \le G$, we define
$\operatorname{FP}_M(H) = M^H$ as the module of $H$-invariants.
Now the map $C_g : M^H \to M^{\leftindex^g{H}}$ is given by $C_g(m)=gm$,
the map
$R^K_H : M^K \to M^H$ is given by inclusion $R^K_H(m)=m$,
and the map $I^K_H : M^H \to M^K$ corresponds to the \emph{relative transfer}
\[
I^K_H(m) = \operatorname{tr}_H^K(m) := \sum_{[g] \in K/H} gm
\]
where $g$ runs over a choice of representatives for the left cosets
of $H$ in $K$.
\end{example}

\begin{definition}
 A Mackey functor $\mathcal{M}$ for $G$ is called \emph{cohomological} if whenever $H\le K \le G$, the map 
 \[
 I_H^KR^K_H:\mathcal{M}(K)\rightarrow \mathcal{M}(K)
 \] 
 is multiplication by the index $[K:H]$.
\end{definition}

\begin{example}
The prototypical example of a cohomological Mackey functor is
group cohomology.
Indeed, for a $G$-module $M$ and a non-negative integer $i$,
we write $\mathcal{H}^i(M)$ to denote the
cohomological Mackey functor given by 
\[
H \mapsto H^i(H, M)
\]
where $R^K_H$ is restriction, $C_g$ is conjugation, and
$I^K_H$ is the \emph{corestriction} or \emph{transfer} operation.
Similarly, one defines cohomological Mackey functors $\mathcal{H}_i(M)$ for group
homology and $\widehat{\mathcal{H}}^i(M)$ for Tate cohomology.
\end{example}

The zeroth cohomology functor $\mathcal{H}^0(M)$ is just another name
for the fixed point functor $\operatorname{FP}_M$.
There is also a \emph{fixed quotient functor} $\operatorname{FQ}_M$,
which is equal to $\mathcal{H}_0(M)$.
We will use the notation $\mathcal{H}^0(M)$
and $\mathcal{H}_0(M)$ in the remainder of the paper.

\begin{example}
Let $K/k$ be a finite Galois field extensions with Galois group $G$.
We define the \emph{relative Brauer group Mackey functor}
as $\widehat{\mathcal{B}} := \mathcal{H}^2(K^\times)$
where $K^\times$ has the natural $\mathbb{Z}G$-module structure.
Equivalently, one may think of $\widehat{\mathcal{B}}$ as the Mackey
functor given by
\[
H \mapsto \widehat{\operatorname{Br}}(K^H) = \operatorname{Br}(K/K^H).
\]
Here, the map $I^\bullet_\bullet$ corresponds to restriction of fields
$\operatorname{Res}_{\bullet/\bullet}$,
the map $R^\bullet_\bullet$ corresponds
to \emph{corestriction}
$\operatorname{Cor}_{\bullet/\bullet}$, and $C_\bullet$ corresponds to
pullback by $k$-algebra isomorphisms.
\end{example}

\begin{example}
Let $K/k$ be a finite Galois field extensions with Galois group $G$.
We have a canonical surjection $\pi : \Gamma_k \to G$ from the absolute
Galois group of $k$ with kernel equal to the absolute Galois group of $K$.
We define the \emph{Brauer group Mackey functor} $\mathcal{B}$
as follows.
For a subgroup $H$ we define $\mathbb{Z}$-modules
\[
\mathcal{B}(H) := H^2(\pi^{-1}(H),(k^s)^\times) \cong
\operatorname{Br}(K^H) .
\]
The operations $I^J_H$, $R^J_H$, and $C_g$ are defined by lifting the
corresponding operations to $\Gamma_k$ or by interpreting the operations
directly in terms of field extensions as in the relative case.
\end{example}

There is a notion of a \emph{morphism of Mackey functors}
$f : M \to N$ given by a collection of morphisms $M(H) \to N(H)$ where
all the evident diagrams involving $I^\bullet_\bullet$,
$R^\bullet_\bullet$, and $C_\bullet$ commute.
Let $\operatorname{CoMack}_R(G)$ denote the category of cohomological
Mackey functors for $G$ over $R$.

We may consider $\mathcal{H}^0$ as a functor from
$\Modcat{RG}$ to $\operatorname{CoMack}_R(G)$
via
\[
V \mapsto \mathcal{H}^0(V)
\]
and similarly for $\mathcal{H}_0$.
There is also an \emph{evaluation functor}
\[
\operatorname{ev} : \operatorname{CoMack}_R(G) \to \Modcat{RG}
\]
which sends a Mackey functor $\mathcal{M}$ to the $RG$-module $\mathcal{M}(1)$.

\begin{prop}
The functors $\mathcal{H}_0 \vdash \operatorname{ev} \vdash \mathcal{H}^0$
form an adjoint triple.
In other words, $\mathcal{H}_0$ is a left adjoint of evaluation
and $\mathcal{H}^0$ is a right adjoint of evaluation.
\end{prop}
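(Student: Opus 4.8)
The plan is to produce, for each half of the adjoint triple, an explicit natural bijection of $\operatorname{Hom}$-sets, and then to check naturality in both variables. The two halves have parallel proofs, with the roles of the induction maps $I^\bullet_\bullet$ and the restriction maps $R^\bullet_\bullet$ interchanged, so I would write out the adjunction $\operatorname{ev}\dashv\mathcal{H}^0$ in full and abbreviate the dual argument for $\mathcal{H}_0\dashv\operatorname{ev}$. Throughout I would use the Mackey axioms freely --- above all the double coset relation specialized to the trivial subgroup --- together with the explicit descriptions of $\mathcal{H}^0(V)$ (with $\mathcal{H}^0(V)(H)=V^H$, the maps $R^\bullet_\bullet$ being inclusions of fixed submodules, $I^\bullet_\bullet$ the transfers $v\mapsto\sum_{[g]\in K/H}gv$, and $C_g$ the module action) and of $\mathcal{H}_0(V)$ (with $\mathcal{H}_0(V)(H)=V_H$ the coinvariants, $I^\bullet_\bullet$ the natural surjections, $R^\bullet_\bullet$ the transfers, $C_g$ the module action); both are cohomological since $\sum_{[g]\in K/H}gv=[K:H]v$ when $v\in V^K$, and similarly on coinvariants.

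For $\operatorname{ev}\dashv\mathcal{H}^0$, fix $\mathcal{M}\in\operatorname{CoMack}_R(G)$ and $V\in\Modcat{RG}$. I would send a morphism $\phi\colon\mathcal{M}\to\mathcal{H}^0(V)$ to $\phi_1\in\operatorname{Hom}_{RG}(\mathcal{M}(1),V)$; this is $RG$-linear because $\phi$ commutes with $C_g$, which on $\mathcal{M}(1)$ and on $\mathcal{H}^0(V)(1)=V$ is the module action of $g$. Conversely, given $RG$-linear $f\colon\mathcal{M}(1)\to V$, I would set $\phi^f_H\colon\mathcal{M}(H)\to V^H$, $\phi^f_H(m)=f(R^H_1 m)$; this lands in $V^H$ since $h\cdot f(R^H_1 m)=f(C_h R^H_1 m)=f(R^H_1 C_h m)=f(R^H_1 m)$ for $h\in H$, using $C_h R^H_1=R^H_1 C_h$ and $C_h=\operatorname{id}_{\mathcal{M}(H)}$. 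Compatibility of $\phi^f$ with $R^\bullet_\bullet$ and $C_\bullet$ is immediate from $R^H_1 R^K_H=R^K_1$ and $R^{{}^g H}_1 C_g=C_g R^H_1$; compatibility with $I^\bullet_\bullet$ is the one substantive point, and I would get it from the double coset axiom with $J=1$, which yields $R^K_1 I^K_H=\sum_{[x]\in K/H}C_x R^H_1$ in $\mathcal{M}$, so that $\phi^f_K(I^K_H m)=f\bigl(\sum_{[x]}C_x R^H_1 m\bigr)=\sum_{[x]}x\cdot f(R^H_1 m)$, which is exactly the transfer of $\mathcal{H}^0(V)$ applied to $\phi^f_H(m)$. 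Finally $\phi^f_1=f\circ R^1_1=f$, while conversely any Mackey morphism $\phi$ satisfies $\phi_H(m)=\phi_1(R^H_1 m)$ because $\phi$ commutes with $R^H_1$ and $R^H_1$ on $\mathcal{H}^0(V)$ is the inclusion $V^H\hookrightarrow V$; hence the two assignments are mutually inverse, and naturality in $\mathcal{M}$ and $V$ is a routine diagram chase.

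The adjunction $\mathcal{H}_0\dashv\operatorname{ev}$ is handled dually. Using $\mathcal{H}_0(V)(1)=V$ and that the induction map $I^H_1\colon V\to V_H$ of $\mathcal{H}_0(V)$ is the canonical projection, I would send $\psi\colon\mathcal{H}_0(V)\to\mathcal{M}$ to $\psi_1\in\operatorname{Hom}_{RG}(V,\mathcal{M}(1))$, and an $RG$-map $f\colon V\to\mathcal{M}(1)$ to $\psi^f_H\colon V_H\to\mathcal{M}(H)$, $\psi^f_H([v])=I^H_1 f(v)$. This is well defined on coinvariants since $I^H_1 f(hv-v)=I^H_1 C_h f(v)-I^H_1 f(v)=0$, using $I^H_1 C_h=C_h I^H_1=I^H_1$; compatibility with $I^\bullet_\bullet$ is $I^K_H I^H_1=I^K_1$; and compatibility with $R^\bullet_\bullet$ follows by precomposing with the surjection $I^K_1\colon V\to V_K$ and applying the double coset axiom with the trivial group on the other factor, in both $\mathcal{H}_0(V)$ and $\mathcal{M}$. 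That $\psi\mapsto\psi_1$ and $f\mapsto\psi^f$ are mutually inverse uses $I^1_1=\operatorname{id}$ in one direction and, in the other, that $\psi$ commutes with the projection $I^H_1\colon V\to V_H$; naturality is again routine.

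The main obstacle is the verification that $\phi^f$ (resp. $\psi^f$) commutes with the transfer/induction maps: this is exactly where the double coset axiom enters, specialized to the trivial subgroup, and once that identity is in hand the remaining compatibilities and the two inverse-bijection claims are bookkeeping with the elementary Mackey axioms. I would also take care with the coinvariants computation in the left-adjoint case; I note in passing that the cohomological hypothesis on $\mathcal{M}$ is not itself needed for the bijections, only to guarantee that $\mathcal{H}^0(V)$ and $\mathcal{H}_0(V)$ lie in $\operatorname{CoMack}_R(G)$ and that $\operatorname{ev}$ is defined there.
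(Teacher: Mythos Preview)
Your argument is correct. The explicit bijections you construct are the standard ones, and your use of the Mackey double coset axiom specialized to the trivial subgroup is exactly the right tool for checking compatibility with the transfer maps in both directions. The remark at the end is also apt: the cohomological hypothesis plays no role in the bijections themselves.

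However, your approach differs from the paper's. The paper does not prove this proposition directly; it simply cites \cite[6.1]{Thvenaz1990SimpleMF}, where the adjoint triple is established for the category of \emph{all} Mackey functors, and then observes that $\mathcal{H}_0$ and $\mathcal{H}^0$ land in the full subcategory of cohomological Mackey functors, so the adjunctions restrict. Your proof, by contrast, is a self-contained elementary verification working entirely inside $\operatorname{CoMack}_R(G)$. What your approach buys is transparency and independence from the literature: a reader can see exactly why the adjunctions hold from the axioms alone. What the paper's approach buys is brevity and the correct attribution of a known result; for a research paper whose focus lies elsewhere, reproving this would be a digression.
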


\begin{proof}
This is proved in \cite[6.1]{Thvenaz1990SimpleMF} for the category of
\emph{all} Mackey
functors, but $\mathcal{H}_0$ and $\mathcal{H}^0$ factor through
the full subcategory of cohomological Mackey functors.
\end{proof}

In particular, since $\operatorname{ev} \circ \mathcal{H}^0$ is naturally isomorphic
to the identity functor, we see that $\mathcal{H}^0$ is fully faithful.

\subsection{Yoshida's Theorem}
\label{ssec:yoshida}

For a finite group $G$ and a commutative ring $R$, let
$\gamma_R(G)$ be the $R$-algebra
\[
\gamma_R(G) := \operatorname{End}_{RG} \left(
\bigoplus_{H\le G} \operatorname{Ind}_{H}^G(R)
\right)^{\operatorname{op}}.
\]
As an $R$-module, the ring $\gamma_R(G)$ has a basis indexed by elements
$[HgK]$ where $H$ and $K$ vary over all subgroups of $G$ and $g$ varies
over a choice of representatives from each double coset in
$H\backslash G/K$.

We recall the following theorem of Yoshida~\cite{YoshidaII}:

\begin{theorem}[Yoshida] \label{thm:ordinary_Yoshida}
There are equivalences of categories between
\begin{enumerate}
\item the category $\operatorname{CoMack}_R(G)$ of
cohomological Mackey functors,
\item the category $\Modcat{\gamma_R(G)}$ of left $\gamma_R(G)$-modules,
and
\item the category
\[
\operatorname{Add}_R\left(\permcat{RG},\Modcat{R}\right)
\]
of $R$-linear functors from the category of finitely generated
permutation $RG$-modules to the category of $R$-modules.
\end{enumerate}
\end{theorem}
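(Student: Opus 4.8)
The plan is to establish the three equivalences of Yoshida's theorem by setting up a chain of functors and checking they are mutually inverse, following the standard Morita-theoretic template. The natural intermediary is statement (2), the category of $\gamma_R(G)$-modules: I would first relate (2) and (3), and then (3) and (1).

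First I would handle the equivalence between $\Modcat{\gamma_R(G)}$ and $\operatorname{Add}_R(\permcat{RG},\Modcat{R})$. The key observation is that $\gamma_R(G)^{\operatorname{op}}$ is by definition the endomorphism ring of the object $P := \bigoplus_{H\le G}\operatorname{Ind}_H^G(R)$ in $\permcat{RG}$, and that $P$ is a \emph{generator} of $\permcat{RG}$: every finitely generated permutation $RG$-module $R[X]$ is a direct summand of a finite direct sum of copies of the $\operatorname{Ind}_H^G(R) = R[G/H]$, since an arbitrary $G$-set is a disjoint union of transitive ones. Hence $\permcat{RG}$ is the idempotent completion (Karoubi envelope) of the one-object category with endomorphism ring $\gamma_R(G)^{\operatorname{op}}$, up to the $R$-linear completion. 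The standard fact that $R$-linear additive functors out of (the additive/idempotent completion of) a single object with endomorphism ring $A$ correspond to $A^{\operatorname{op}}$-modules — via $F \mapsto F(P)$ in one direction and $N \mapsto \operatorname{Hom}_A(-,N)$-type constructions in the other — then gives the equivalence, with the $\operatorname{op}$ in the definition of $\gamma_R(G)$ precisely accounting for the variance. I would cite Yoshida or a standard reference for the bookkeeping rather than reprove it.

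Next I would relate $\operatorname{Add}_R(\permcat{RG},\Modcat{R})$ with $\operatorname{CoMack}_R(G)$. Given a cohomological Mackey functor $\mathcal{M}$, one produces an $R$-linear functor on $\permcat{RG}$ by sending $R[G/H] \mapsto \mathcal{M}(H)$ and more generally $R[X] \mapsto \bigoplus_i \mathcal{M}(H_i)$ for the orbit decomposition $X = \coprod G/H_i$; the effect on morphisms is forced by the double-coset basis of $\operatorname{Hom}_{RG}(R[G/H],R[G/K])$, which matches exactly the double-coset formula in the last Mackey axiom, with $I$, $R$, $C$ supplying the images of the three types of elementary maps. Conversely, an $R$-linear functor $F$ on $\permcat{RG}$ yields a Mackey functor via $H \mapsto F(R[G/H])$, with $I^K_H$, $R^K_H$, $C_g$ read off from the corresponding maps between the $R[G/\bullet]$; the cohomological condition $I^K_H R^K_H = [K:H]$ corresponds to the fact that the composite $R[G/K]\to R[G/H]\to R[G/K]$ of the natural projection and transfer is multiplication by $[K:H]$, which is automatic for genuine module maps, so \emph{every} such $F$ gives a cohomological Mackey functor. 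Checking that these two constructions are inverse up to natural isomorphism is the routine but lengthy part.

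The main obstacle is the second equivalence, specifically verifying that the Mackey axioms — especially the double-coset (Mackey decomposition) axiom — are \emph{exactly equivalent} to functoriality on $\permcat{RG}$, together with showing the cohomological condition is automatic on the functor side. This requires a careful identification of $\operatorname{Hom}_{RG}(R[G/H],R[G/K])$ with the free $R$-module on double cosets $H\backslash G/K$ and tracking how composition of such morphisms decomposes; the combinatorics of this composition is precisely what the last Mackey axiom encodes, so the proof amounts to matching two a priori different descriptions of the same composition law. I would organize this by first treating the generators $R[G/H]$ and the three elementary morphism types, reducing the general case to these by additivity and the fact that every morphism of permutation modules is a matrix of double-coset maps. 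Everything else — the first equivalence and the formal consequences — follows from standard Morita theory and can be stated with references to \cite{YoshidaII} and \cite{T-W}.
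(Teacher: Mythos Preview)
The paper does not prove this theorem: it is stated as a result of Yoshida with a citation to \cite{YoshidaII}, and the paper only records what the equivalences look like, namely
\[
\mathcal{M} \mapsto \bigoplus_{H\le G} \mathcal{M}(H)
\qquad\text{and}\qquad
\mathcal{M} \mapsto \bigl(R[X] \mapsto \operatorname{Hom}_{\operatorname{CoMack}_R(G)}(\mathcal{H}^0(R[X]),\mathcal{M})\bigr),
\]
the latter with a reference to \cite[3.4]{Bouc}. So there is no proof in the paper to compare against; your proposal goes well beyond what the paper does.

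That said, your outline is a correct sketch of how the theorem is actually proved. The Morita-type argument for (2)$\Leftrightarrow$(3), using that $P=\bigoplus_H R[G/H]$ is an additive generator of $\permcat{RG}$, is exactly the standard route. For (1)$\Leftrightarrow$(3), your direct description $R[G/H]\mapsto \mathcal{M}(H)$ and the paper's Yoneda-style description agree, since $\mathcal{H}^0(R[G/H])$ is (co)representable in the relevant sense and $\operatorname{Hom}(\mathcal{H}^0(R[G/H]),\mathcal{M})\cong \mathcal{M}(H)$. Your identification of the crux---that composition in $\operatorname{Hom}_{RG}(R[G/H],R[G/K])$ written in the double-coset basis is precisely the Mackey decomposition formula, and that the cohomological condition is automatic because the composite $R[G/K]\to R[G/H]\to R[G/K]$ equals multiplication by $[K:H]$---is correct and is the substantive content of Yoshida's argument. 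In short: your plan is sound, and the paper simply takes the theorem as input rather than proving it.
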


The equivalence $\operatorname{CoMack}_R(G) \to \Modcat{\gamma_R(G)}$
is given by taking
\[
\mathcal{M} \mapsto \bigoplus_{H\le G} \mathcal{M}(H).
\]
The equivalence $\operatorname{CoMack}_R(G) \to 
\operatorname{Add}_R\left(\permcat{RG},\Modcat{R}\right)$
is given by
\[
\mathcal{M} \mapsto
\left(R[X] \mapsto
\operatorname{Hom}_{\operatorname{CoMack}_R(G)}
(\coh(R[X]),\mathcal{M})
\right)
\]
as described in \cite[3.4]{Bouc}.

These different categories make it easier to see certain facts about
cohomological Mackey functors.
For example, by trading
$I^\bullet_\bullet$ and $R^\bullet_\bullet$,
taking $C_g$ to $C_{g^{-1}}$, and reversing the order of composition, we
have a notion of ``contravariant Mackey functor.''
These correspond to right $\gamma_R(G)$-modules
and contravariant $R$-linear functors.
The fact that permutation lattices are isomorphic to their own dual
representations emphasizes the symmetry in a different way.

The category $\operatorname{CoMack}_R(G)$ is an abelian category with enough
projectives since it is equivalent to a category of modules.
Indeed, we have the following description of the
projective objects.

\begin{theorem} \cite[16.5]{T-W} \label{thm:comack_projective}
A cohomological Mackey functor is projective in $\operatorname{CoMack}_R(G)$
if and only if it is isomorphic to $\mathcal{H}^0(P)$ where
$P$ is a direct summand of a permutation
$RG$-module.
\end{theorem}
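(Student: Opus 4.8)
The plan is to work through Yoshida's equivalence, translating the statement into a claim about modules over the Hecke-type algebra $\gamma_R(G)$, and to exploit the adjoint triple $\mathcal{H}_0 \vdash \operatorname{ev} \vdash \mathcal{H}^0$. The ``if'' direction is the easier half: if $P$ is a direct summand of a permutation $RG$-module $R[X]$, then $\mathcal{H}^0(P)$ is a direct summand of $\mathcal{H}^0(R[X])$, so it suffices to show each $\mathcal{H}^0(R[X])$ is projective, and for this it is enough to treat $X = G/H$ a transitive $G$-set since $\mathcal{H}^0$ is additive. Under the equivalence $\operatorname{CoMack}_R(G) \to \operatorname{Add}_R(\permcat{RG}, \Modcat{R})$, the object $\mathcal{H}^0(R[G/H])$ goes to the representable functor $R[Y] \mapsto \operatorname{Hom}_{\permcat{RG}}(R[G/H], R[Y])$ (using that $\mathcal{H}^0$ is fully faithful, as noted just before Section~\ref{ssec:yoshida}), and representable functors are projective in a functor category with values in modules. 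Equivalently, under $\operatorname{CoMack}_R(G) \to \Modcat{\gamma_R(G)}$ the functor $\mathcal{M} \mapsto \bigoplus_H \mathcal{M}(H)$ sends $\mathcal{H}^0(\operatorname{Ind}_H^G R)$ to a direct summand of the free module $\gamma_R(G)$ itself, by the very definition of $\gamma_R(G)$ as an endomorphism ring of $\bigoplus_H \operatorname{Ind}_H^G R$; hence it is projective.

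For the ``only if'' direction, suppose $\mathcal{M}$ is projective in $\operatorname{CoMack}_R(G)$. Since the category has enough projectives of the form $\mathcal{H}^0(P)$ with $P$ permutation — indeed $\bigoplus_H \mathcal{H}^0(\operatorname{Ind}_H^G R)$ is a projective generator, again from Yoshida's description — we may write $\mathcal{M}$ as a direct summand of some $\mathcal{H}^0(R[X])$. The point is then to ``descend'' this splitting from Mackey functors to $RG$-modules. Applying the evaluation functor $\operatorname{ev}$ to the splitting $\mathcal{H}^0(R[X]) \cong \mathcal{M} \oplus \mathcal{N}$ gives a direct sum decomposition $R[X] \cong \operatorname{ev}(\mathcal{M}) \oplus \operatorname{ev}(\mathcal{N})$ of $RG$-modules, so $\operatorname{ev}(\mathcal{M})$ is a direct summand of the permutation module $R[X]$. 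It remains to identify $\mathcal{M}$ with $\mathcal{H}^0(\operatorname{ev}(\mathcal{M}))$. Here I would use that $\mathcal{H}^0$ is fully faithful with $\operatorname{ev} \circ \mathcal{H}^0 \cong \operatorname{id}$ (stated in the excerpt), so $\mathcal{H}^0$ identifies $\Modcat{RG}$ with a full reflective/coreflective subcategory of $\operatorname{CoMack}_R(G)$; the essential image of $\mathcal{H}^0$ is closed under direct summands because $\mathcal{H}^0(\operatorname{ev}(-))$ is an idempotent endofunctor on that image and $\mathcal{H}^0(R[X])$ lies in the image. Concretely: the counit-type comparison map $\mathcal{H}^0(\operatorname{ev}(\mathcal{M})) \to \mathcal{H}^0(\operatorname{ev}(\mathcal{H}^0(P))) = \mathcal{H}^0(P)$ obtained by applying $\mathcal{H}^0 \circ \operatorname{ev}$ to the inclusion $\mathcal{M} \hookrightarrow \mathcal{H}^0(P)$, together with the unit $\mathcal{M} \to \mathcal{H}^0(\operatorname{ev}(\mathcal{M}))$, fits into a diagram that exhibits $\mathcal{M}$ as a retract of $\mathcal{H}^0(\operatorname{ev}(\mathcal{M}))$ and vice versa, forcing them to be isomorphic.

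The one delicate point — and the step I expect to be the main obstacle — is verifying that the unit $\eta_{\mathcal{M}} : \mathcal{M} \to \mathcal{H}^0(\operatorname{ev}(\mathcal{M}))$ is an isomorphism whenever $\mathcal{M}$ is a direct summand of some $\mathcal{H}^0(P)$. Naturality of $\eta$ gives a commuting square with the split inclusion $\iota : \mathcal{M} \hookrightarrow \mathcal{H}^0(P)$ and its retraction $\rho$; since $\eta_{\mathcal{H}^0(P)}$ is an isomorphism (that is exactly the statement $\operatorname{ev}\circ\mathcal{H}^0 \cong \operatorname{id}$ combined with $\mathcal{H}^0$ fully faithful, so that $\mathcal{H}^0(\operatorname{ev}(\mathcal{H}^0(P))) \cong \mathcal{H}^0(P)$ compatibly with $\eta$), a diagram chase using $\rho\iota = \operatorname{id}_{\mathcal{M}}$ and the functoriality of $\mathcal{H}^0\circ\operatorname{ev}$ shows $\eta_{\mathcal{M}}$ is split mono; surjectivity of $\eta_{\mathcal{M}}$ follows because $\mathcal{H}^0(\operatorname{ev}(-))$ applied to the idempotent $\iota\rho$ on $\mathcal{H}^0(P)$ has image $\mathcal{H}^0(\operatorname{ev}(\mathcal{M}))$, matching the image of $\iota\rho$ itself under $\eta$. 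Once this is in hand, combining with the ``if'' direction completes the proof. I would also remark that this recovers the cited statement \cite[16.5]{T-W} and that the role of ``permutation $RG$-module'' rather than ``arbitrary $RG$-module'' is precisely that $\permcat{RG}$ is the domain of Yoshida's functor-category description, so projectives in $\operatorname{CoMack}_R(G)$ see exactly the summands of permutation modules and nothing more.
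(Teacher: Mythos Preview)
The paper does not prove this theorem; it is stated with a citation to \cite[16.5]{T-W} and no argument is given. So there is no proof in the paper to compare yours against.

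That said, your proposal is essentially correct and follows the natural line through Yoshida's equivalence. The ``if'' direction is fine, and your alternative argument via $\gamma_R(G)$---that $\bigoplus_{H} \mathcal{H}^0(R[G/H])$ corresponds to the regular module under the equivalence $\mathcal{M} \mapsto \bigoplus_H \mathcal{M}(H)$---is the cleanest route; the representable-functor version has a minor variance slip relative to the formula the paper actually writes down (which is contravariant in $R[X]$), but since permutation lattices are self-dual this is harmless, and in any case the $\gamma_R(G)$ argument stands on its own.

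For the ``only if'' direction your strategy is right, and the ``delicate point'' you flag is in fact routine: for any adjunction with fully faithful right adjoint, the class of objects on which the unit is an isomorphism is closed under retracts. Concretely, if $\rho\iota = 1_{\mathcal{M}}$ with $\iota : \mathcal{M} \to \mathcal{H}^0(P)$, set $\varphi := \rho \circ \eta_{\mathcal{H}^0(P)}^{-1} \circ \mathcal{H}^0(\operatorname{ev}(\iota))$; the two naturality squares for $\iota$ and $\rho$ give $\varphi \circ \eta_{\mathcal{M}} = 1$ and $\eta_{\mathcal{M}} \circ \varphi = 1$ directly, with no further chase needed. An equivalent and perhaps slicker phrasing: the idempotent $\iota\rho$ on $\mathcal{H}^0(P)$ equals $\mathcal{H}^0(f)$ for some idempotent $f$ on $P$ by full faithfulness, idempotents split in $\Modcat{RG}$, and $\mathcal{H}^0$ carries that splitting to one of $\mathcal{H}^0(P)$ with summand $\mathcal{M}$.
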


There is an explicit description of the indecomposable projective
cohomological Mackey functors, which we shall see
in Section~\ref{ssec:tsm}.
    
\subsection{Tate Mackey functors}
\label{ssec:tate}

Here we introduce \emph{Tate Mackey functors}.
While we seem to be the first to use this name, we are certainly not the
first to study them (see, e.g., \cite{Linckelmann16}). 

\begin{definition}
A \emph{Tate Mackey functor $\mathcal{M}$ for $G$} is a cohomological Mackey
functor such that $\mathcal{M}(1)=0$.
\end{definition}

\begin{example}
The functors $\widehat{\mathcal{H}}^i(M)$
are Tate Mackey functors for all $i$.
Therefore, $\mathcal{H}^i(M)$ is a Tate Mackey functor for $i > 0$.
As a particular case, the relative Brauer functor $\widehat{\mathcal{B}}$
is a Tate Mackey functor.
\end{example}

\begin{definition}
We define $\tau_R(G) :=
\gamma_R(G)/\langle [1e1] \rangle$ as the quotient
of Yoshida's algebra $\gamma_R(G)$ by the two-sided ideal generated by
$[1e1]$.
\end{definition}

Let $\operatorname{TMack}_R(G)$ denote the full subcategory of
Tate Mackey functors for $G$ over $R$.
Let $\modcatStab{RG}$ (resp. $\permcatStab{RG}$)
denote the \emph{stable module category} of
$\modcat{RG}$ (resp. $\permcat{RG}$) (see, e.g. \cite[IV.1]{ARS}).
We have the following ``stable'' version of Yoshida's theorem:

\begin{theorem} \label{thm:stable_Yoshida}
There are equivalences of categories between
\begin{enumerate}
\item the category $\operatorname{TMack}_R(G)$ of
Tate Mackey functors,
\item the category $\Modcat{\tau_R(G)}$ of left $\tau_R(G)$-modules,
and
\item the category
\[
\operatorname{Add}_R\left(\permcatStab{RG},\Modcat{R}\right)
\]
of $R$-linear functors from the stable category of permutation
$RG$-lattices to the category of $R$-modules.
\end{enumerate}
\end{theorem}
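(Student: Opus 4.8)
The strategy is to deduce the stable version (Theorem~\ref{thm:stable_Yoshida}) from the non-stable version (Theorem~\ref{thm:ordinary_Yoshida}) by quotienting all three categories by a compatible system of subcategories/ideals, using the observation that $\operatorname{TMack}_R(G)$ is exactly the subcategory of Mackey functors killed by the idempotent-ish piece corresponding to the trivial subgroup. First I would establish the equivalence $(1)\Leftrightarrow(2)$. Recall from Yoshida's theorem that $\operatorname{CoMack}_R(G)\simeq\Modcat{\gamma_R(G)}$ via $\mathcal{M}\mapsto\bigoplus_{H\le G}\mathcal{M}(H)$, and that the element $[1e1]$ corresponds (in the endomorphism-algebra picture) to the projection onto the summand $\operatorname{Ind}_1^G(R)=RG$. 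Under the module equivalence, a cohomological Mackey functor $\mathcal{M}$ has $\mathcal{M}(1)=0$ if and only if the corresponding $\gamma_R(G)$-module is annihilated by the two-sided ideal $\langle[1e1]\rangle$ — because $\mathcal{M}(1)$ is recovered as $[1e1]\cdot\bigl(\bigoplus_H\mathcal{M}(H)\bigr)$, and $[1e1]$ acting through any double coset $[1g1]$ still lands in the $\mathcal{M}(1)$-summand, so the full ideal acts as zero precisely when $\mathcal{M}(1)=0$. Hence $\operatorname{TMack}_R(G)$ is identified with the category of $\gamma_R(G)$-modules on which $\langle[1e1]\rangle$ acts trivially, which is canonically $\Modcat{\gamma_R(G)/\langle[1e1]\rangle}=\Modcat{\tau_R(G)}$. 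This gives $(1)\simeq(2)$, and the equivalence is again given by $\mathcal{M}\mapsto\bigoplus_{H}\mathcal{M}(H)$ (now a $\tau_R(G)$-module).

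Next I would establish $(1)\Leftrightarrow(3)$. By Yoshida, $\operatorname{CoMack}_R(G)\simeq\operatorname{Add}_R(\permcat{RG},\Modcat{R})$ via $\mathcal{M}\mapsto\bigl(R[X]\mapsto\operatorname{Hom}_{\operatorname{CoMack}_R(G)}(\coh(R[X]),\mathcal{M})\bigr)$. The key point is that an additive functor $F:\permcat{RG}\to\Modcat{R}$ factors through the stable category $\permcatStab{RG}$ precisely when it kills all morphisms that factor through a projective (equivalently, here, through $RG$ itself, since the projectives in $\permcat{RG}$ are the free modules), and, dually, a Mackey functor lies in $\operatorname{TMack}_R(G)$ iff the corresponding functor does so. Concretely, for $\mathcal{M}\in\operatorname{CoMack}_R(G)$ the associated functor sends the free module $RG=R[G/1]$ to $\operatorname{Hom}_{\operatorname{CoMack}_R(G)}(\coh(RG),\mathcal{M})$; since $\coh(RG)$ is the representable projective generator for the ``subgroup $1$'' data, this Hom-group is naturally isomorphic to $\mathcal{M}(1)$ (this is the same fully-faithfulness/adjunction computation $\operatorname{ev}\circ\coh\cong\operatorname{id}$ noted just before \S\ref{ssec:yoshida}). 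So $\mathcal{M}$ is Tate iff the functor vanishes on $RG$, iff (by additivity) it vanishes on all projectives of $\permcat{RG}$, iff it descends to an $R$-linear functor on $\permcatStab{RG}$. Running this in both directions — and checking that the inverse equivalence of Yoshida, which builds $\mathcal{M}$ from $F$ via $\mathcal{M}(H)=F(R[G/H])$ up to the standard bookkeeping, sends functors on the stable category back to Tate Mackey functors — yields $\operatorname{TMack}_R(G)\simeq\operatorname{Add}_R(\permcatStab{RG},\Modcat{R})$. Finally I would note that these two identifications are compatible: both arise by ``quotienting out the $RG$-part,'' so the diagram of equivalences $(1),(2),(3)$ from Yoshida's theorem restricts to a commuting diagram of equivalences among the Tate versions, which is the assertion.

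The main obstacle I expect is the bookkeeping in the $(1)\Leftrightarrow(3)$ half: one must pin down exactly which morphisms in $\permcat{RG}$ are annihilated by the functor attached to a Tate Mackey functor and confirm this is precisely the ideal of maps factoring through projectives (so that the quotient is the stable category in the sense of \cite[IV.1]{ARS}), rather than some larger or smaller ideal. This requires knowing that in $\permcat{RG}$ the projective objects coincide with the free modules — true because a permutation lattice $R[X]$ is projective iff it is a direct sum of copies of $RG$, which one checks summand-by-summand on orbits since $R[G/H]$ is projective iff $H=1$ (at least after localizing; over a general commutative ring one must be slightly careful, but the relevant statement is that $[1e1]$ generates exactly the ideal of morphisms through free modules, which is what \cite{YoshidaII} and \cite{Bouc} make available). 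Once that identification of ideals is in hand, both equivalences are formal consequences of Yoshida's theorem together with the universal property of quotient categories/quotient rings, so the proof is short; I would simply cite \cite[3.4]{Bouc} and \cite{YoshidaII} for the facts being quotiented and present the argument as above.
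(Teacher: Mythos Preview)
Your proposal is correct and follows essentially the same route as the paper: both arguments observe that the three categories in the Tate version are full subcategories of the corresponding categories in Yoshida's theorem, and then verify that Yoshida's equivalences restrict appropriately by identifying the condition $\mathcal{M}(1)=0$ with annihilation by $\langle[1e1]\rangle$ on the module side and with $F(RG)=0$ on the functor side. Your write-up is considerably more detailed than the paper's (which dispatches the whole thing in a short paragraph), and your caution about whether ``morphisms through projectives'' matches ``morphisms through free modules'' is unnecessary here---any projective $RG$-module is a summand of a free one, so an additive functor vanishing on $RG$ automatically kills every morphism factoring through a projective---but the underlying argument is the same.
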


\begin{proof}
Each of these categories is a full subcategory of the corresponding
category in Yoshida's Theorem. 
Thus, we simply need to verify that the equivalences of Yoshida's
theorem preserve these subcategories.

Multiplication by $[1e1]$ in $\gamma_R(G)$ corresponds to application of
the morphism $I^1_1 : \mathcal{M}(1) \to \mathcal{M}(1)$
in a cohomological Mackey functor.
From this, it follows that the first two categories are equivalent.
Under Yoshida's correspondence, the condition that $F(RG) = 0$ for the
additive category is equivalent to asking that $F(1) = 0$ for the
cohomological functor since $R[G] = R[G/1]$.
\end{proof}

Applying this to the usual extension/restriction adjunction
for the defining surjective $R$-algebra homomorphism
$\gamma_R(G) \to \tau_R(G)$ implies the following:

\begin{prop} \label{prop:adjoint_CT}
There is an adjoint pair $\pi \dashv \iota$ of functors
\[
\pi : \operatorname{CoMack}_R(G) \to\operatorname{TMack}_R(G),
\quad
\iota : \operatorname{TMack}_R(G) \to\operatorname{CoMack}_R(G)
\]
where
$\pi$ is the quotient functor $M \mapsto M/ \langle M(1) \rangle$ and
$\iota$ is the inclusion functor.
\end{prop}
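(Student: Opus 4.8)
The plan is to transport the statement across the two versions of Yoshida's theorem into a statement about module categories, where it becomes the tautological fact that restriction of scalars along a ring surjection has a left adjoint. First I would invoke Theorem~\ref{thm:ordinary_Yoshida} to identify $\operatorname{CoMack}_R(G)$ with $\Modcat{\gamma_R(G)}$, and Theorem~\ref{thm:stable_Yoshida} to identify $\operatorname{TMack}_R(G)$ with $\Modcat{\tau_R(G)}$; as noted in the proof of Theorem~\ref{thm:stable_Yoshida}, these identifications are compatible in the sense that the inclusion $\iota$ corresponds precisely to restriction of scalars $\operatorname{Res}_\phi$ along the defining surjective homomorphism $\phi : \gamma_R(G) \to \tau_R(G)$. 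It therefore suffices to show that the left adjoint of $\operatorname{Res}_\phi$, namely the extension-of-scalars functor $\tau_R(G) \otimes_{\gamma_R(G)} -$, corresponds under Yoshida's equivalence to the quotient functor $\mathcal{M} \mapsto \mathcal{M}/\langle \mathcal{M}(1)\rangle$.

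To see this, write $N = \bigoplus_{H \le G} \mathcal{M}(H)$ for the $\gamma_R(G)$-module attached to a cohomological Mackey functor $\mathcal{M}$. Since $\phi$ is surjective with kernel the two-sided ideal $I := \langle [1e1] \rangle$, the extension-of-scalars functor is simply $N \mapsto N/IN$. Now multiplication by $[1e1]$ acts on $N$ as the projection onto the summand $\mathcal{M}(1)$ (followed by $I^1_1 = \mathrm{id}$), as recalled in the proof of Theorem~\ref{thm:stable_Yoshida}; hence $[1e1] N = \mathcal{M}(1)$, and because $N$ is a unital module we get $IN = \gamma_R(G)\,[1e1]\,\gamma_R(G)\,N = \gamma_R(G) \cdot \mathcal{M}(1)$, the left $\gamma_R(G)$-submodule generated by the degree-$1$ component. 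Under Yoshida's dictionary $\gamma_R(G)$-submodules of $N$ correspond to sub-Mackey functors of $\mathcal{M}$, so this submodule is the smallest sub-Mackey functor $\langle \mathcal{M}(1)\rangle$ containing $\mathcal{M}(1)$, and $N/IN$ corresponds to $\mathcal{M}/\langle \mathcal{M}(1)\rangle = \pi(\mathcal{M})$. One checks in passing that this is genuinely a Tate Mackey functor, i.e.\ that $\pi(\mathcal{M})(1) = 0$, which is immediate since $\mathcal{M}(1) \subseteq \langle \mathcal{M}(1)\rangle$.

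Everything here is formal except the identification $IN = \gamma_R(G)\cdot\mathcal{M}(1)$, which I expect to be the one point worth spelling out: it rests on the observation that $[1e1]\,\gamma_R(G)\,N = [1e1] N$ (using unitality) together with the explicit description of $[1e1]$ as a projection onto the evaluation summand. Once that is in hand, the adjunction $\pi \dashv \iota$ is obtained by transporting the extension/restriction adjunction across the equivalences, and it is worth remarking that the unit and counit are then the obvious quotient and (identity) maps, so that $\pi$ and $\iota$ really are the canonical quotient and inclusion functors named in the statement rather than twisted variants.
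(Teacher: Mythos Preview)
Your approach is exactly the one the paper takes: the proposition is stated as an immediate consequence of the extension/restriction adjunction for the surjection $\gamma_R(G)\to\tau_R(G)$, transported across the two Yoshida equivalences. You have simply filled in the verification that extension of scalars corresponds to $\mathcal{M}\mapsto\mathcal{M}/\langle\mathcal{M}(1)\rangle$, which the paper leaves implicit.
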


In particular, the functor
$\widehat{\mathcal{H}}^0 : \modcat{R} \to \operatorname{TMack}_R(G)$
is full.

\section{Proof of the Description Theorem}
\label{sec:main_theorem}

Here we proof Theorem~\ref{thm:main_detailed}.
In fact, we simultaneously prove a relative version.

For the rest of the paper, we use the shorthand notation
\[
\mathcal{H}^0(X) := \mathcal{H}^0(\mathbb{Z}[X])
\textrm{ and }
\widehat{\mathcal{H}}^0(X) := \widehat{\mathcal{H}}^0(\mathbb{Z}[X])
\]
whenever $X$ is a $G$-set.

\begin{theorem} \label{thm:main}
Let $G = \textup{Gal}(K/k)$ and $T$ be a retract rational torus over $k$
with character $G$-lattice $M$.
Suppose $X_0$ and $X_1$ are $G$-sets.
If there exists an exact sequence of cohomological Mackey functors 
\begin{equation} \label{eq:start_res}
\mathcal{H}^0(X_1)  \xrightarrow{\psi}
\mathcal{H}^0(X_0)\xrightarrow{\epsilon}\mathcal{H}^1(M)\rightarrow 0
\end{equation}
then there is an exact sequence 
\begin{equation} \label{eq:end_res}
0\rightarrow H^1(k, T) \rightarrow \Br(X_0)\xrightarrow{\Psi} \Br(X_1).
\end{equation}
Similarly, if there exists an exact sequence of Tate Mackey functors 
\begin{equation} \label{eq:start_res_rel}
\zcoh(X_1)  \xrightarrow{\psi} \zcoh(X_0)\xrightarrow{\epsilon} \fcoh(M)\rightarrow 0
\end{equation}
then there is an exact sequence 
\begin{equation} \label{eq:end_res_rel}
0\rightarrow H^1(k, T) \rightarrow \widehat{\textup{Br}}(X_0)\xrightarrow{\Psi} \widehat{\textup{Br}}(X_1).
\end{equation}
\end{theorem}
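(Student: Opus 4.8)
The plan is to recognise both sides of the desired exact sequence as $\operatorname{Hom}$-groups into the Brauer group Mackey functor, and then to simply apply the left-exact functor $\operatorname{Hom}_{\operatorname{CoMack}_{\mathbb{Z}}(G)}(-,\mathcal{B})$ to the hypothesised presentation \eqref{eq:start_res} (resp. $\operatorname{Hom}_{\operatorname{TMack}_{\mathbb{Z}}(G)}(-,\widehat{\mathcal{B}})$ in the relative case). Concretely I would isolate two natural identifications and then combine them.

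\emph{Identification of the $\operatorname{Br}(X_i)$.} For any $G$-set $X$ I claim a natural isomorphism $\operatorname{Br}(X)\cong\operatorname{Hom}_{\operatorname{CoMack}_{\mathbb{Z}}(G)}(\mathcal{H}^0(X),\mathcal{B})$. First, $\operatorname{Hom}_{\operatorname{CoMack}_{\mathbb{Z}}(G)}(\mathcal{H}^0(\mathbb{Z}[G/H]),\mathcal{N})\cong\mathcal{N}(H)$ for every cohomological Mackey functor $\mathcal{N}$: this is built into Yoshida's Theorem~\ref{thm:ordinary_Yoshida} (the additive functor attached to $\mathcal{N}$ takes the value $\mathcal{N}(H)$ at $\mathbb{Z}[G/H]$), and it may also be checked directly, since $\mathcal{H}^0(\mathbb{Z}[G/H])$ is projective and the statement holds when $\mathcal{N}=\mathcal{H}^0(\text{permutation})$ by full faithfulness of $\mathcal{H}^0$. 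Taking $\mathcal{N}=\mathcal{B}$ and decomposing $X$ into orbits gives $\operatorname{Hom}(\mathcal{H}^0(X),\mathcal{B})\cong\prod_i\mathcal{B}(H_i)=\prod_i\operatorname{Br}(F_i)=\operatorname{Br}(X)$. The real content is naturality with respect to $\mathbb{Z}G$-morphisms of permutation lattices: a morphism $\mathbb{Z}[X']\to\mathbb{Z}[X]$ decomposes along double cosets into restriction-, corestriction-, and conjugation-type maps, and under Shapiro's Lemma these match exactly the operations $\operatorname{Res}$, $\operatorname{Cor}$, $C_\bullet$ that define $\mathcal{B}$; carrying out this bookkeeping is the crux.

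\emph{Identification of $H^1(k,T)$.} Next I claim $H^1(k,T)\cong\operatorname{Hom}_{\operatorname{CoMack}_{\mathbb{Z}}(G)}(\mathcal{H}^1(M),\mathcal{B})$. Fix a flasque resolution $0\to M\to P_0\to F\to 0$ of the first type; since $T$ is retract rational, $F$ is invertible by Theorem~\ref{thm:retrat_inv}. The long exact sequence of fixed-point Mackey functors together with $\mathcal{H}^1(P_0)=0$ yields a presentation $\mathcal{H}^0(P_0)\to\mathcal{H}^0(F)\to\mathcal{H}^1(M)\to 0$. Applying $\operatorname{Hom}(-,\mathcal{B})$ and the identification above — extended from permutation lattices to the invertible lattice $F$ by choosing a permutation lattice $F\oplus F'$ and using additivity of all functors involved — identifies $\operatorname{Hom}(\mathcal{H}^1(M),\mathcal{B})$ with the kernel of $H^2(k,\torus(F))\to H^2(k,\torus(P_0))$. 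On the other hand, applying the anti-equivalence $\torus$ to the flasque resolution gives the exact sequence of tori $0\to\torus(F)\to\torus(P_0)\to T\to 0$, whose Galois cohomology sequence identifies this same kernel with $H^1(k,T)$, using $H^1(k,\torus(P_0))=0$ exactly as in the proof of Theorem~\ref{thm:main_existence_relative}. This is precisely the step that uses retract rationality: without invertibility of $F$ one cannot evaluate $\operatorname{Hom}(\mathcal{H}^0(F),\mathcal{B})$ as a Brauer group, and the identification breaks.

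\emph{Conclusion and relative version.} Applying the left-exact contravariant functor $\operatorname{Hom}_{\operatorname{CoMack}_{\mathbb{Z}}(G)}(-,\mathcal{B})$ to the right-exact sequence \eqref{eq:start_res} gives the exact sequence
\[
0\to\operatorname{Hom}(\mathcal{H}^1(M),\mathcal{B})\to\operatorname{Hom}(\mathcal{H}^0(X_0),\mathcal{B})\to\operatorname{Hom}(\mathcal{H}^0(X_1),\mathcal{B}),
\]
and substituting the two identifications produces \eqref{eq:end_res} with $\Psi$ the induced map; since $\mathcal{H}^0$ is fully faithful, $\psi=\mathcal{H}^0(f)$ for a $\mathbb{Z}G$-morphism $f\colon\mathbb{Z}[X_1]\to\mathbb{Z}[X_0]$ and $\Psi=H^2(k,\torus(f))$, the map ``$\psi^\ast$'' of Theorem~\ref{thm:main_detailed}. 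The relative statement follows from the identical argument carried out inside $\operatorname{TMack}_{\mathbb{Z}}(G)$: $\widehat{\mathcal{H}}^0(X)=\pi(\mathcal{H}^0(X))$ is projective, $\operatorname{TMack}_{\mathbb{Z}}(G)$ is closed under kernels and cokernels in $\operatorname{CoMack}_{\mathbb{Z}}(G)$ so \eqref{eq:start_res_rel} stays right exact, one uses $\widehat{\mathcal{B}}=\mathcal{H}^2(K^\times)$, the adjunction $\pi\dashv\iota$ of Proposition~\ref{prop:adjoint_CT}, and the stable Yoshida Theorem~\ref{thm:stable_Yoshida} in place of their ordinary analogues, and $H^1(k,T)=H^1(K/k,T(K))$ shows nothing is lost by passing to relative Brauer groups. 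The main obstacle is the first identification together with its naturality — making Yoshida's abstract equivalence concrete enough to see that ``$\operatorname{Hom}$ into the Brauer Mackey functor'' is literally ``$X\mapsto\prod_i\operatorname{Br}(F_i)$'' with restrictions, corestrictions, and isomorphism-induced maps as its transition maps — with the correct invocation of retract rationality at the flasque lattice $F$ a secondary point.
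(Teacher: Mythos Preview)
Your argument is correct, and it takes a genuinely different route from the paper's own proof. The paper argues by \emph{comparison}: it first invokes Theorem~\ref{thm:main_existence_relative} to manufacture one particular presentation $\zcoh(Y_1)\to\zcoh(Y_0)\to\mathcal{H}^1(M)\to 0$ (arising from a coflasque resolution of the second type) for which the Brauer-group sequence is already known to be exact; it then extends both the given $X_\bullet$-presentation and this $Y_\bullet$-presentation to full projective resolutions of $\mathcal{H}^1(M)$, lifts the resulting chain homotopy equivalence to the level of $\mathbb{Z}G$-lattices using fullness of $\zcoh$, and applies the (relative) Brauer functor to transport the exactness from the $Y$-side to the $X$-side via the stable Yoshida theorem.

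Your approach dispenses with the existence theorem as an input: by recognising every term of the target sequence as a value of the single left-exact functor $\operatorname{Hom}_{\operatorname{CoMack}}(-,\mathcal{B})$ (respectively $\operatorname{Hom}_{\operatorname{TMack}}(-,\widehat{\mathcal{B}})$), the conclusion drops out of abstract nonsense. The identification $H^1(k,T)\cong\operatorname{Hom}(\mathcal{H}^1(M),\mathcal{B})$, which you obtain from a flasque resolution of the \emph{first} type, is an intrinsic characterisation that the paper never isolates. The price you pay is that the naturality of $\operatorname{Hom}(\mathcal{H}^0(-),\mathcal{B})\cong H^2(k,\torus(-))$ along $\mathbb{Z}G$-morphisms of permutation (and invertible) lattices must be checked head-on --- precisely the double-coset bookkeeping you flag --- whereas the paper absorbs the same verification into its appeal to Theorem~\ref{thm:stable_Yoshida}. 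Both arguments invoke retract rationality at the same juncture (invertibility of the flasque lattice), though via different resolution types; your use of the first-type flasque resolution and the direct summand trick is arguably more transparent than the paper's route through Lemma~\ref{lem:fl_cofl_compare}.
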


\begin{proof}
We will prove only the relative Brauer
statement using \eqref{eq:start_res_rel}
since the two proofs are similar.

First, we show that the $G$-module homomorphism from
Theorem~\ref{thm:main_existence_relative} already gives
rise to a sequence as in \eqref{eq:start_res_rel}.
Indeed, the theorem says that there are $G$-sets $Y_0$ and $Y_1$
with a $G$-module homomorphism
$\Phi: \Z[Y_1] \rightarrow \Z[Y_0]$ such that the following is exact
\begin{equation} \label{eq:reminder}
0\rightarrow H^1(k, T) \rightarrow \rBr(Y_0)\xrightarrow{\Psi}
\rBr(Y_1).
\end{equation}
The sequence comes from a coflasque resolution of the second type so we
have an exact sequence of $G$-lattices
\[
0 \to M \to C \to \mathbb{Z}[Y_0] \to 0
\]
with $C$ coflasque and a surjection $\mathbb{Z}[Y_1] \to C$.
Thus, we have an exact sequence of Mackey functors
\[
\zcoh(Y_1) \xrightarrow{\Phi_\ast} \zcoh(Y_0)\xrightarrow{\epsilon} \fcoh(M) \rightarrow 0
\]
such that \eqref{eq:reminder} is exact.
We now use the established sequence using $\{Y_i\}$ to compare to the
given sequence using $\{X_i\}$.

We have two projective presentations of $\fcoh(M)$ in the category of
Tate Mackey functors, which extend to projective resolutions
for appropriate $X_2,Y_2,\ldots$.
Therefore, we have two chain complexes of
projective objects
\[
\xymatrix{
\cdots \ar[r] &
\zcoh(Y_2) \ar[r] \ar[d]^{f_2} &
\zcoh(Y_1) \ar[r] \ar[d]^{f_1} &
\zcoh(Y_0) \ar[r] \ar[d]^{f_0} &
0 \\
\cdots \ar[r] &
\zcoh(X_2) \ar[r] &
\zcoh(X_1) \ar[r] &
\zcoh(X_0) \ar[r] &
0
}
\]
where $f_{\bullet}$ is a homotopy equivalence.
Since the functor $\zcoh$ is full, there exists a corresponding diagram
\[
\xymatrix{
\cdots \ar[r] &
\mathbb{Z}[Y_2] \ar[r] \ar[d]^{f_2} &
\mathbb{Z}[Y_1] \ar[r] \ar[d]^{f_1} &
\mathbb{Z}[Y_0] \ar[r] \ar[d]^{f_0} &
0 \\
\cdots \ar[r] &
\mathbb{Z}[X_2] \ar[r] &
\mathbb{Z}[X_1] \ar[r] &
\mathbb{Z}[X_0] \ar[r] &
0
}
\]
along with the associated morphisms establishing that it is a homotopy
equivalence.
(However, this new diagram is only a homotopy equivalence of chain
complexes in the \emph{stable} category.)

Nevertheless, we can take duals and apply the Brauer group
Mackey functor to obtain
\[
\xymatrix{
0 \ar[r] &
\Br(X_0) \ar[d]^{f_0^\ast} \ar[r] &
\Br(X_1) \ar[d]^{f_1^\ast} \ar[r] &
\Br(X_2) \ar[d]^{f_2^\ast} \ar[r] &
\cdots \\
0 \ar[r] &
\Br(Y_0) \ar[r] &
\Br(Y_2) \ar[r] &
\Br(Y_1) \ar[r] &
\cdots
}
\]
which commutes and is a homotopy equivalence by
the stable Yoshida's Theorem~\ref{thm:stable_Yoshida}.
Therefore $f_0^\ast$ induces an isomorphism of the kernels
so the exactness of \eqref{eq:reminder} implies
the exactness of \eqref{eq:end_res_rel}.
\end{proof}

As we'll see below, one can construct exact sequences as above for
\emph{any} finitely generated cohomological/Tate Mackey functor.
The proof of the theorem above gives equivalences between the
corresponding complexes of relative/absolute Brauer groups.
However, even in the case where $M$ is the character lattice of torus
$T$ and we are resolving $\mathcal{H}^1(M)$, there is no connection 
to $H^1(k,T)$ unless the torus is retract rational.

\section{Permutation resolutions of Tate Mackey functors}
\label{sec:perm_resolutions}

The authors' main motivation for this paper was to
explicitly construct descriptions of $H^1(k,T)$ using Brauer groups.
Theorem~\ref{thm:main} allows us to reduce this question to
the study of Mackey functors over $\mathbb{Z}$.

From~Theorem~\ref{thm:comack_projective},
the category $\operatorname{CoMack}_R(G)$
has enough projectives, so we can construct projective resolutions.
Moreover, projective objects are always direct summands of objects of
the form $\mathcal{H}^0(X)$ where $X$ is a $G$-set.
Consequently, this is also true for $\operatorname{TMack}_R(G)$ using the
quotient objects $\widehat{\mathcal{H}}^0(X)$.

However, while Mackey functors over fields and complete discrete
valuation rings have a nice theory, the integral case
is less tractable.
In particular, we do not have a Krull-Schmidt theorem.
The category of Tate Mackey functors allows us considerably more
control.

\begin{prop} \label{prop:Tate_p_decomposition}
Let $G$ be a finite group of order $n$
and suppose $p_1,\ldots,p_r$ are the distinct primes dividing $n$.
We have canonical equivalences of categories
\[
\operatorname{TMack}_{\mathbb{Z}}(G) \cong
\operatorname{TMack}_{\mathbb{Z}/n\mathbb{Z}}(G)
\]
and
\[
\operatorname{TMack}_{\mathbb{Z}}(G) \cong
\operatorname{TMack}_{\mathbb{Z}_{p_1}}(G) \times
\cdots \times
\operatorname{TMack}_{\mathbb{Z}_{p_r}}(G).
\]
(Here $\mathbb{Z}_{p_i}$ denotes the $p_i$-adic integers.)
\end{prop}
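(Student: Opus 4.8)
The plan is to identify $\tau_{\mathbb{Z}}(G)$ explicitly and show it is a finite ring on which $n = |G|$ acts as zero, after which both equivalences become statements about modules over that ring. First I would recall from Theorem~\ref{thm:stable_Yoshida} that $\operatorname{TMack}_R(G) \cong \Modcat{\tau_R(G)}$, so it suffices to produce canonical ring isomorphisms $\tau_{\mathbb{Z}}(G) \cong \tau_{\mathbb{Z}/n\mathbb{Z}}(G)$ and $\tau_{\mathbb{Z}}(G) \cong \prod_i \tau_{\mathbb{Z}_{p_i}}(G)$. For the first, the key observation is that every generator $[HgK]$ of $\tau_{\mathbb{Z}}(G)$ other than those supported at the trivial subgroup is killed by multiplication by a suitable index $[G:H]$ (via the cohomological relation $I^K_H R^K_H = [K{:}H]$ together with the defining relation $[1e1]=0$ which forces the whole "bottom layer" to vanish); pushing this through the basis of double cosets one sees that $n \cdot \tau_{\mathbb{Z}}(G) = 0$, so the surjection $\gamma_{\mathbb{Z}}(G) \to \tau_{\mathbb{Z}}(G)$ factors through $\gamma_{\mathbb{Z}/n\mathbb{Z}}(G) = \gamma_{\mathbb{Z}}(G) \otimes \mathbb{Z}/n\mathbb{Z}$, and comparing $\mathbb{Z}$-module bases (both sides are free/finite on the same double-coset symbols modulo $[1e1]$) shows the induced map $\tau_{\mathbb{Z}/n\mathbb{Z}}(G) \to \tau_{\mathbb{Z}}(G)$ is an isomorphism.

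Next I would establish that $\tau_{\mathbb{Z}}(G)$, now known to be a $\mathbb{Z}/n\mathbb{Z}$-algebra that is finitely generated as a module, decomposes along the primary decomposition $\mathbb{Z}/n\mathbb{Z} \cong \prod_i \mathbb{Z}/p_i^{a_i}\mathbb{Z}$. Since $\tau_{\mathbb{Z}}(G)$ is a module over this product of local rings, the central idempotents of $\mathbb{Z}/n\mathbb{Z}$ lift to central idempotents $e_i \in \tau_{\mathbb{Z}}(G)$ and give $\tau_{\mathbb{Z}}(G) \cong \prod_i e_i \tau_{\mathbb{Z}}(G)$. It then remains to identify $e_i \tau_{\mathbb{Z}}(G)$ with $\tau_{\mathbb{Z}_{p_i}}(G)$: the point is that $\gamma_{\mathbb{Z}_{p_i}}(G) = \gamma_{\mathbb{Z}}(G) \otimes_{\mathbb{Z}} \mathbb{Z}_{p_i}$ (Yoshida's algebra commutes with flat base change, being an $\operatorname{End}$ of a finitely generated projective), so $\tau_{\mathbb{Z}_{p_i}}(G) = \tau_{\mathbb{Z}}(G) \otimes_{\mathbb{Z}} \mathbb{Z}_{p_i}$, and since $\tau_{\mathbb{Z}}(G)$ is a finite $p_i^{a_i}$-torsion-plus-other-primes module the factor surviving $\otimes \mathbb{Z}_{p_i}$ is exactly $e_i \tau_{\mathbb{Z}}(G)$. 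Assembling these gives the second equivalence; transporting back through stable Yoshida yields the claimed equivalences of Tate Mackey functor categories.

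The main obstacle I expect is the first step: proving cleanly that $n$ annihilates $\tau_{\mathbb{Z}}(G)$, i.e. that quotienting Yoshida's algebra by $[1e1]$ really does force all of the "non-trivial-subgroup" basis elements to be torsion with exponent dividing $n$. This needs a careful bookkeeping of how the two-sided ideal $\langle [1e1]\rangle$ propagates through the multiplication rule on double-coset basis elements — essentially a reindexed form of the Mackey relation — and one has to be sure no basis element escapes being hit by an index-multiplication relation. An alternative, perhaps cleaner, route here is to argue on the functor side: a Tate Mackey functor $\mathcal{M}$ has $\mathcal{M}(1)=0$, and for any $H \le G$ the composite $I^H_1 R^H_1 = [H{:}1] = |H|$ factors through $\mathcal{M}(1)=0$, hence $|H|$ annihilates $\mathcal{M}(H)$; taking $H=G$ shows $n$ annihilates every value, so $n$ annihilates the category and therefore $\tau_{\mathbb{Z}}(G)$. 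This reduces the obstacle to a one-line Mackey-functor computation and makes the $\mathbb{Z}/n\mathbb{Z}$ statement immediate, with the product decomposition following by the idempotent-lifting argument above.
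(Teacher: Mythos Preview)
Your proposal is correct, and in fact the ``alternative, perhaps cleaner, route'' you sketch at the end is exactly the argument the paper gives: for a Tate Mackey functor $\mathcal{M}$ the composite $I^H_1 R^H_1$ is multiplication by $|H|$ (cohomological) and factors through $\mathcal{M}(1)=0$ (Tate), so $|H|$ kills $\mathcal{M}(H)$ and hence $n$ kills every value; the rest is the Chinese remainder theorem on $\mathbb{Z}/n\mathbb{Z}$. Your initial plan of tracking the ideal $\langle[1e1]\rangle$ through the double-coset basis would work but is unnecessary --- the paper bypasses all of that bookkeeping by arguing on the functor side, as you yourself anticipated. (One minor wording issue: ``taking $H=G$'' is not quite the point --- you need that $|H|\mid n$ for every $H$, which you've already observed.) Your treatment of the $p$-adic factors via flat base change of $\gamma$ and idempotent decomposition is more explicit than the paper's one-line appeal to CRT, but the content is the same.
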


\begin{proof}

Suppose $\mathcal{M}$ is in $\operatorname{TMack}_{\mathbb{Z}}(G)$.
Since $\mathcal{M}$ is cohomological, the map
\[
I^H_1 R^H_1 : \mathcal{M}(H) \to \mathcal{M}(H)
\]
is multiplication by $|H|$.  However, since $\mathcal{M}$ is Tate,
this map is also the zero map.
Thus, multiplication by $n$ is trivial for every subgroup $H$.

In view of Theorem~\ref{thm:stable_Yoshida},
the first equivalence now follows from the fact that 
the ring $\tau_{\mathbb{Z}}(G)$ has a $\mathbb{Z}/n\mathbb{Z}$-algebra
structure.
By the Chinese remainder theorem,
\[
\mathbb{Z}/n\mathbb{Z} \cong
\mathbb{Z}/p_1^{s_1}\mathbb{Z} \times \cdots
\times \mathbb{Z}/p_r^{s_r}\mathbb{Z},
\]
which then implies the second equivalence.
\end{proof}

We will use the shorthand notations
\begin{align*}
\operatorname{TMack}(G) &:= \operatorname{TMack}_{\mathbb{Z}}(G)\\
\operatorname{TMack}_p(G) &:= \operatorname{TMack}_{\mathbb{Z}_p}(G)\\
\tau(G) &:= \tau_{\mathbb{Z}}(G) \\
\tau_p(G) &:= \tau_{\mathbb{Z}_p}(G)
\end{align*}
for the remainder of the paper.

The ring $\tau(G)$ is finitely generated as a $\mathbb{Z}/|G|\mathbb{Z}$-module,
therefore we have the following:

\begin{prop}
The ring $\tau(G)$ is finite.
\end{prop}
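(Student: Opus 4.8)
The statement to prove is that $\tau(G)$ is finite. The plan is to combine two facts that are either proved or essentially proved in the preceding material. First, I would recall that by Yoshida's Theorem~\ref{thm:ordinary_Yoshida}, the ring $\gamma_{\mathbb Z}(G)$ is free of finite rank as a $\mathbb Z$-module: it has an explicit $\mathbb Z$-basis indexed by the triples $[HgK]$ with $H,K$ ranging over subgroups of $G$ and $g$ over double-coset representatives in $H\backslash G/K$, and since $G$ is finite there are only finitely many such triples. Hence $\gamma_{\mathbb Z}(G)$ is a finitely generated abelian group; consequently so is its quotient $\tau(G) = \gamma_{\mathbb Z}(G)/\langle[1e1]\rangle$.

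So it remains to see that $\tau(G)$ is in fact torsion, which forces a finitely generated abelian group to be finite. This is precisely the content of the displayed observation immediately preceding the statement: the ring $\tau(G)$ carries a $\mathbb Z/|G|\mathbb Z$-algebra structure. Concretely, one argues as in the proof of Proposition~\ref{prop:Tate_p_decomposition}: under the equivalence of Theorem~\ref{thm:stable_Yoshida}, left $\tau(G)$-modules are the same as Tate Mackey functors, and for any such functor $\mathcal M$ the cohomological identity $I^H_1 R^H_1 = [H:1] = |H|$ together with $R^H_1 = 0$ (since $\mathcal M(1) = 0$) shows that multiplication by $|H|$, hence by $|G|$, annihilates every $\mathcal M(H)$. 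Taking $\mathcal M = \tau(G)$ as a module over itself shows $|G|\cdot\tau(G) = 0$, so $\tau(G)$ is a $\mathbb Z/|G|\mathbb Z$-module.

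Putting the two steps together: $\tau(G)$ is a finitely generated $\mathbb Z$-module which is annihilated by $|G|$, hence is a finite set, being a finitely generated module over the finite ring $\mathbb Z/|G|\mathbb Z$. This completes the argument.

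I do not anticipate a genuine obstacle here — the statement is a corollary of material already assembled. The only point requiring a little care is making the $\mathbb Z/|G|\mathbb Z$-algebra claim precise: one should either invoke it directly as already established in the text (it is asserted in the sentence just before the proposition, and the ``multiplication by $|G|$ is trivial'' computation appears in the proof of Proposition~\ref{prop:Tate_p_decomposition}), or reprove the one-line annihilation computation for $\tau(G)$ viewed as a module over itself. Either route is routine; the substance is simply "finitely generated abelian group + torsion = finite."
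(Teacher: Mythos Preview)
Your proposal is correct and matches the paper's own argument exactly: the paper simply observes (in the sentence immediately preceding the proposition) that $\tau(G)$ is finitely generated as a $\mathbb{Z}/|G|\mathbb{Z}$-module and concludes finiteness, which is precisely the ``finitely generated abelian group annihilated by $|G|$'' reasoning you spell out.
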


In particular, $\tau(G)$ is an artinian ring and we obtain the following:

\begin{prop} \label{prop:Tate_Krull_Schmidt}
The category of finitely generated Tate Mackey functors over
$\mathbb{Z}$ or $\mathbb{Z}_p$ is Krull-Schmidt and
there are finitely many indecomposable projective objects.
\end{prop}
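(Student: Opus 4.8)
The plan is to deduce Proposition~\ref{prop:Tate_Krull_Schmidt} from the preceding structural facts, namely that $\tau(G)$ (resp. $\tau_p(G)$) is a finite ring, hence artinian, together with the stable Yoshida equivalence $\operatorname{TMack}(G) \cong \Modcat{\tau(G)}$ of Theorem~\ref{thm:stable_Yoshida}. First I would observe that under this equivalence, finitely generated Tate Mackey functors correspond to finitely generated $\tau(G)$-modules; since $\tau(G)$ is finite as a set, a finitely generated module over it is itself finite, and in particular has finite length. Any module category over an artinian ring in which the objects considered have finite length is Krull--Schmidt: this is the classical theorem that finite-length modules have local endomorphism rings on their indecomposable summands, so the Krull--Schmidt--Azumaya theorem applies. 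The same argument runs verbatim with $\tau_p(G)$ in place of $\tau(G)$, using the product decomposition of Proposition~\ref{prop:Tate_p_decomposition} if one wants to pass between the two.

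The key steps, in order, are: (1) invoke Theorem~\ref{thm:stable_Yoshida} to replace $\operatorname{TMack}(G)$ by $\Modcat{\tau(G)}$ and the finitely generated subcategory by finitely generated $\tau(G)$-modules; (2) note that the preceding proposition gives that $\tau(G)$ is a finite ring, so it is artinian and every finitely generated module is of finite length; (3) apply the Krull--Schmidt--Azumaya theorem for finite-length modules to conclude the category is Krull--Schmidt; (4) for the finiteness of the set of indecomposable projectives, observe that the projective indecomposable $\tau(G)$-modules are exactly the direct summands appearing in a decomposition of $\tau(G)$ as a left module over itself, and since $\tau(G)$ has finite length there are only finitely many isomorphism classes of such summands (equivalently, finitely many primitive idempotents up to conjugacy in the artinian ring $\tau(G)$). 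The $\mathbb{Z}_p$ case follows identically since $\tau_p(G)$ is likewise a finite ring.

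I expect no serious obstacle here; the proposition is essentially a formal consequence of the artinian-ring theory once the stable Yoshida equivalence and the finiteness of $\tau(G)$ are in hand. The only point requiring a little care is making sure that ``finitely generated Tate Mackey functor'' really does correspond to ``finitely generated $\tau(G)$-module'' under the equivalence, and that finitely generated over the finite ring $\tau(G)$ forces finite (hence finite-length) — but this is immediate. If one wants to be fully self-contained about the count of indecomposable projectives, the mildly delicate part is the standard fact that over an artinian ring a complete set of primitive orthogonal idempotents is finite and that every indecomposable projective is a summand of the free module of rank one; I would simply cite this rather than reprove it.

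\begin{proof}
By the stable Yoshida Theorem~\ref{thm:stable_Yoshida}, the category of Tate Mackey functors over $\mathbb{Z}$ (resp. $\mathbb{Z}_p$) is equivalent to the category of left modules over $\tau(G)$ (resp. $\tau_p(G)$), and under this equivalence finitely generated Tate Mackey functors correspond to finitely generated modules. By the previous proposition, $\tau(G)$ is a finite ring, and $\tau_p(G)$ is finite for the same reason; in particular both are artinian. A finitely generated module over a finite ring is finite, hence of finite length, so the Krull--Schmidt--Azumaya theorem applies and the category is Krull--Schmidt. Finally, every indecomposable projective module over an artinian ring $\Lambda$ is isomorphic to a direct summand of $\Lambda$ viewed as a left module over itself; since $\Lambda = \tau(G)$ (resp. $\tau_p(G)$) has finite length, there are only finitely many such summands up to isomorphism, so there are finitely many indecomposable projective objects.
\end{proof}
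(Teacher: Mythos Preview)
Your proof is correct and follows exactly the approach the paper intends: the paper simply remarks that $\tau(G)$ is a finite (hence artinian) ring and states the proposition as an immediate consequence, and your argument spells out precisely the standard artinian-ring reasoning behind that remark.
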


In particular, we have the existence of projective covers
and, therefore, minimal projective resolutions.
In fact, the indecomposable projective objects can be completely
classified.  This follows from the classification for the indecomposable
projective objects for cohomological Mackey functors over
$\mathbb{Z}_p$, which we describe in the next section.

\subsection{Trivial source modules}
\label{ssec:tsm}

We recall some basic facts about trivial source modules from
\cite{Linckelmann_2018} and \cite{Lassueur2023}.

\begin{definition}
Let $G$ be a group, $p$ be a prime, and $R$ be the ring
$\mathbb{F}_p$ or $\mathbb{Z}_p$.
A \emph{trivial source module} is an indecomposable summand
of a permutation $RG$-lattice.
\end{definition}

The possible trivial source modules can be constructed as follows.
Let $H$ be a $p$-subgroup of $G$, let $N_G(H)$ be the normalizer of
$H$ in $G$, and let $\overline{N}_G(H) := N_G(H)/H$.
Now consider a simple $\mathbb{F}_p\overline{N}_G(H)$-module $V$;
in other words, $V$ is an irreducible representation
of $\overline{N}_G(H)$ over $\mathbb{F}_p$.
Let $U$ be the projective cover of $V$ as a
$\mathbb{F}_p\overline{N}_G(H)$-module.
We may view $U$ as an $\mathbb{F}_pN_G(H)$-module by inflation.
We obtain a $\mathbb{F}_pG$-module $W$ as the Green correspondent of
$U$.
The module $W$ is a trivial source $\mathbb{F}_pG$-module.
Trivial source modules are liftable to $\mathbb{Z}_pG$-lattices
and we obtain a $\mathbb{Z}_pG$-lattice $T_{H,V}$.
In summary:

\begin{prop}
Up to isomorphism, the trivial source modules over $\mathbb{Z}_pG$ are
the modules $T_{H,V}$ where $H$ varies over all $p$-subgroups of $G$,
and $V$ varies over all simple $\mathbb{F}_p\overline{N}_G(H)$-modules.
\end{prop}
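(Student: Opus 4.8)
The statement to be proved is the classification of trivial source $\mathbb{Z}_pG$-modules: up to isomorphism they are exactly the lattices $T_{H,V}$, as $H$ ranges over $p$-subgroups of $G$ and $V$ ranges over simple $\mathbb{F}_p\overline{N}_G(H)$-modules. The plan is to assemble this from three standard pieces of modular representation theory, which I would cite rather than reprove. First, the theory of vertices and sources: every indecomposable $\mathbb{F}_pG$-module $W$ has a vertex $P$ (a $p$-subgroup of $G$, well-defined up to conjugacy) and a source, and $W$ is a trivial source module precisely when its source is the trivial module of its vertex. For such $W$, the vertex $P$ is automatically a $p$-group, and the Green correspondence sets up a bijection between indecomposable trivial source $\mathbb{F}_pG$-modules with vertex $P$ and indecomposable trivial source $\mathbb{F}_pN_G(P)$-modules with vertex $P$. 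Second, for the normalizer $N_G(P)$, an indecomposable trivial source module with vertex exactly $P$ is (via inflation and the fact that $P$ acts trivially on such modules because the source is trivial) the inflation of a projective indecomposable $\mathbb{F}_p\overline{N}_G(P)$-module, and projective indecomposable modules over $\mathbb{F}_p\overline{N}_G(P)$ are parametrized by their simple tops, i.e.\ by simple $\mathbb{F}_p\overline{N}_G(P)$-modules $V$. Third, lifting: trivial source modules lift uniquely to $\mathbb{Z}_pG$-lattices (Scott's theorem / the idempotent-lifting argument, since $\mathbb{Z}_p$ is a complete DVR), so the classification over $\mathbb{F}_p$ transfers to $\mathbb{Z}_p$ and gives the $T_{H,V}$.

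Concretely, the steps I would carry out, in order, are: (1) recall the definition of vertex and source and the statement of the Green correspondence, noting that a trivial source module has $p$-group vertex and trivial source by definition; (2) reduce via Green correspondence to classifying indecomposable trivial source $\mathbb{F}_pN_G(H)$-modules with vertex $H$, for a fixed $p$-subgroup $H$; (3) observe that since the source is the trivial $\mathbb{F}_pH$-module, $H$ acts trivially on such a module, so it is inflated from $\overline{N}_G(H)=N_G(H)/H$, and that having vertex exactly $H$ forces the inflated module to be $\mathbb{F}_p\overline{N}_G(H)$-projective; (4) identify indecomposable projective $\mathbb{F}_p\overline{N}_G(H)$-modules with simple $\mathbb{F}_p\overline{N}_G(H)$-modules $V$ by taking $V$ to its projective cover $U$; (5) conclude that the indecomposable trivial source $\mathbb{F}_pG$-modules are exactly the Green correspondents $W$ of these $U$; (6) invoke liftability to pass from $W$ to $T_{H,V}$ over $\mathbb{Z}_pG$, and check that distinct pairs $(H,V)$ (up to $G$-conjugacy) give non-isomorphic lattices, using injectivity of the Green correspondence and uniqueness of the lift.

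The only subtle point — and the one I would flag as the main obstacle — is step (3): one must verify that a trivial source $\mathbb{F}_pN_G(H)$-module with vertex \emph{exactly} $H$, when regarded as an $\mathbb{F}_p\overline{N}_G(H)$-module, is projective, and conversely that every indecomposable projective $\mathbb{F}_p\overline{N}_G(H)$-module, inflated to $N_G(H)$, has vertex exactly $H$ (not a proper subgroup) and trivial source. This is where the defect-group/vertex bookkeeping happens: a projective module for $\overline{N}_G(H)$ has vertex $1$ over $\overline{N}_G(H)$, which inflates to vertex $H$ over $N_G(H)$, and one checks the source is trivial because the module restricted to $H$ is a direct sum of copies of the trivial module (as $H$ acts trivially). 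Since all of this is classical — it is exactly the content of the references \cite{Linckelmann_2018} and \cite{Lassueur2023} cited just before the statement — the ``proof'' is really a matter of correctly stitching together the Green correspondence, the triviality of the $H$-action, and the lifting theorem, and I would write it as a short argument with pointers to those sources rather than a self-contained development.
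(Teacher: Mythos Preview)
Your proposal is correct and matches the paper's approach: the paper does not give a formal proof of this proposition at all, but states it as a recalled fact from \cite{Linckelmann_2018} and \cite{Lassueur2023}, with the preceding paragraph sketching exactly the construction you describe (simple $\mathbb{F}_p\overline{N}_G(H)$-module $\to$ projective cover $\to$ inflate to $N_G(H)$ $\to$ Green correspondent $\to$ lift to $\mathbb{Z}_p$). Your write-up is in fact more detailed than the paper's treatment, correctly identifying step~(3) as the only point requiring care, and your plan to handle it via vertex bookkeeping under inflation is the standard one.
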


Note that each $T_{H,V}$ has vertex $H$.
In particular, this means that $T_{H,V}$ is a direct summand of the
permutation lattice $\mathbb{Z}_p[G/H]$.
Of course, $T_{H,V}$ is usually also a direct summand of several other
permutation lattices, which is what makes finding permutation
resolutions so interesting.

We can now state the classification of indecomposable projective Tate
Mackey functors:

\begin{prop}
The isomorphism classes of indecomposable projective objects of
$\operatorname{TMack}(G)$ are precisely the functors
\[
\mathcal{T}_{H,V} := \widehat{\mathcal{H}}^0(T_{H,V})
\]
where
\begin{enumerate}
\item $p$ varies over all primes dividing $|G|$,
\item $H$ varies over all non-trivial $p$-subgroups of $G$,
\item $V$ varies over all simple $\mathbb{F}_p N_G(H)/H$-modules, and
\item $T_{H,V}$ is the trivial source module over $\mathbb{Z}_p$
associated to $(H,V)$.
\end{enumerate}
\end{prop}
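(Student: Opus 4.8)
The plan is to reduce the classification of indecomposable projective Tate Mackey functors to the already-established classification for cohomological Mackey functors, using the machinery built up in the preceding subsections. First I would observe that by Proposition~\ref{prop:Tate_p_decomposition} we have a product decomposition $\operatorname{TMack}(G) \cong \prod_{i} \operatorname{TMack}_{p_i}(G)$, so an indecomposable projective object of $\operatorname{TMack}(G)$ corresponds to an indecomposable projective object of $\operatorname{TMack}_p(G)$ for exactly one prime $p \mid |G|$. Thus it suffices to classify the indecomposable projectives of $\operatorname{TMack}_p(G)$ for a fixed prime $p$.

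Next I would identify the indecomposable projective objects of $\operatorname{CoMack}_{\mathbb{Z}_p}(G)$. By Theorem~\ref{thm:comack_projective}, the projective cohomological Mackey functors over $\mathbb{Z}_p$ are exactly the $\coh(P)$ with $P$ a direct summand of a permutation $\mathbb{Z}_pG$-module; since $\mathbb{Z}_p$ is a complete DVR, the Krull--Schmidt theorem holds for $\mathbb{Z}_pG$-lattices, so the indecomposable such $P$ are precisely the trivial source modules $T_{H,V}$ as $H$ ranges over $p$-subgroups of $G$ and $V$ over simple $\mathbb{F}_p\overline{N}_G(H)$-modules (using the classification recalled in Section~\ref{ssec:tsm}). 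The fully faithfulness of $\coh$ (noted after the adjoint triple proposition) then forces $\coh(T_{H,V})$ to be indecomposable, and these exhaust the indecomposable projectives of $\operatorname{CoMack}_{\mathbb{Z}_p}(G)$.

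Then I would transfer this to the Tate setting via the adjunction $\pi \dashv \iota$ of Proposition~\ref{prop:adjoint_CT}, where $\pi(\mathcal{M}) = \mathcal{M}/\langle \mathcal{M}(1)\rangle$. Because $\pi$ is a left adjoint to the inclusion and, at the level of module categories, corresponds to base change along the surjection $\gamma_{\mathbb{Z}_p}(G) \to \tau_{\mathbb{Z}_p}(G)$, it sends projectives to projectives; moreover every projective Tate Mackey functor arises as $\pi$ of a projective cohomological one. Applying $\pi$ to $\coh(T_{H,V})$ yields $\zcoh(T_{H,V}) = \mathcal{T}_{H,V}$ (since quotienting $\coh(L)$ by its value at the trivial subgroup is exactly $\zcoh(L)$, and $\pi \circ \coh = \zcoh$ on the relevant objects). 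The key point is then to determine which $\mathcal{T}_{H,V}$ are nonzero and which coincide: $\zcoh(T_{H,V}) = 0$ precisely when $T_{H,V}$ is projective as a $\mathbb{Z}_pG$-lattice, i.e. when $H = 1$ (in which case $T_{1,V}$ is a summand of $\mathbb{Z}_p[G]$ and all its Tate cohomology vanishes). This is why the trivial subgroup is excluded in the statement. For the remaining $H \ne 1$, I would argue that distinct pairs $(H,V)$ give non-isomorphic $\mathcal{T}_{H,V}$ by noting that $\zcoh$ restricted to non-projective trivial source modules is still faithful enough to separate isomorphism classes — concretely, one recovers $H$ as (a conjugacy-class invariant of) the vertex and $V$ from the action on an appropriate Brauer quotient, which survives passage to the Tate functor since the vertex data lives away from the trivial subgroup.

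The main obstacle I anticipate is the last faithfulness/separation step: showing that $\pi$ (equivalently $\zcoh$) does not identify two non-isomorphic indecomposable projectives $\coh(T_{H,V})$ and $\coh(T_{H',V'})$ with $H, H' \ne 1$, and that it kills exactly the ones with trivial vertex. The clean way to handle this is to work on the module-category side: $\tau_{\mathbb{Z}_p}(G) = \gamma_{\mathbb{Z}_p}(G)/\langle [1e1]\rangle$, and $[1e1]$ is the idempotent-like element cutting out the ``vertex-$1$'' block, so the indecomposable projective $\tau_{\mathbb{Z}_p}(G)$-modules are exactly the images of those indecomposable projective $\gamma_{\mathbb{Z}_p}(G)$-modules not annihilated by this quotient, with no further collapsing — a standard fact about quotients by a two-sided ideal generated by an idempotent. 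Invoking Yoshida's theorem and its stable version (Theorem~\ref{thm:stable_Yoshida}) to translate back, together with the vertex bookkeeping for trivial source modules, completes the argument.
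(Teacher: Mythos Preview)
Your argument is correct and reaches the same conclusion, but the route differs from the paper's in one notable respect. The paper works through the classification of \emph{simple} cohomological Mackey functors $\mathcal{S}_{H,V}$ (from \cite{Thvenaz1990SimpleMF,T-W}): it identifies $\coh(T_{H,V})$ as the projective cover of $\mathcal{S}_{H,V}$ in $\operatorname{CoMack}_{\mathbb{Z}_p}(G)$, then observes that $\mathcal{S}_{H,V}$ is already a nonzero Tate functor precisely when $H\neq 1$ (since $\mathcal{S}_{H,V}(1)=0$ in that case), so $\zcoh(T_{H,V})\to\mathcal{S}_{H,V}$ remains a projective cover in $\operatorname{TMack}_p(G)$; distinctness and exhaustiveness then come for free from the projective--simple bijection. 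You instead bypass the simple functors entirely, using full faithfulness of $\coh$ plus Krull--Schmidt for $\mathbb{Z}_pG$-lattices to pin down the indecomposable projectives of $\operatorname{CoMack}_{\mathbb{Z}_p}(G)$ directly as the $\coh(T_{H,V})$, and then handle the passage to $\operatorname{TMack}_p(G)$ ring-theoretically via the quotient $\gamma_{\mathbb{Z}_p}(G)\twoheadrightarrow\tau_{\mathbb{Z}_p}(G)$. Your ``standard fact'' is valid here because $[1e1]$ really is an idempotent (it is the projection onto the summand $\mathbb{Z}_p[G/1]$) and the rings are semiperfect, so the indecomposable projectives over the quotient are exactly the nonzero images $P_S/\langle[1e1]\rangle P_S$ with no further identifications --- but note that the proof of this fact is itself the projective--simple bijection, so at bottom the two arguments coincide. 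The paper's version is more explicit about which simples survive; yours is a bit cleaner if one wants to avoid invoking the internal structure of the $\mathcal{S}_{H,V}$.
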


\begin{proof}
Simple Mackey functors were classified in \cite{Thvenaz1990SimpleMF};
the cohomological case can be found in \cite[Proposition 16.10]{T-W}.
For $\operatorname{CoMack}_{\mathbb{F}_p}(G)$, the simple functors
$\mathcal{S}_{H,V}$ are indexed by $p$-subgroups $H$ of $G$ along
with a simple $\mathbb{F}_p \overline{N}_G(H)$-module $V$.
The full construction is rather complex, but we only need that
$\mathcal{S}_{H,V}(H)=V$ and $\mathcal{S}_{H,V}(K)=0$ if $K < H$.

Their projective covers are given by
\[
\mathcal{H}^0(T_{H,V} \otimes \mathbb{F}_p) \to \mathcal{S}_{H,V}
\]
in the category of cohomological Mackey functors over $\mathbb{F}_p$.
Note that $\mathcal{S}_{H,V}$ is also a simple Mackey functor over $\mathbb{Z}_p$.
Since $T_{H,V} \to T_{H,V} \otimes \mathbb{F}_p$ is a projective cover
in the category of $\mathbb{Z}_pG$-modules,
the projective cover is 
\[
\mathcal{H}^0(T_{H,V}) \to \mathcal{S}_{H,V}
\]
in $\operatorname{CoMack}_{\mathbb{Z}_p}(G)$.

We now consider the quotients of these objects in
$\operatorname{TMack}_{\mathbb{Z}_p}(G)$ via the
functor $\pi$ from Proposition~\ref{prop:adjoint_CT}.
Since they are simple, either $\pi(\mathcal{S}_{H,V}) \cong \mathcal{S}_{H,V}$
or $\pi(\mathcal{S}_{H,V}) \cong 0$.
If $H\ne 1$, then $S_{H,V}(1)=0$ but $S_{H,V}(H)\ne 0$
so they are already non-zero Tate Mackey functors.
Thus, when $H \ne 1$, we obtain projective covers
\[
\widehat{\mathcal{H}}^0(T_{H,V}) \to \mathcal{S}_{H,V}
\]
in $\operatorname{TMack}_{\mathbb{Z}_p}(G)$.
However, when $H=1$, the module $T_{1,V}$ is a direct summand of
$\mathbb{Z}_p[G/1]=\mathbb{Z}_pG$ and therefore
$\widehat{\mathcal{H}}^0(T_{1,V})=0$.
Consequently, $\pi(\mathcal{S}_{1,V}) \cong 0$.

We have characterized the Tate Mackey functors over $\mathbb{Z}_p$
for all primes $p$.  The case for $\mathbb{Z}$
now follows from Proposition~\ref{prop:Tate_p_decomposition}.
\end{proof}

\subsection{Projective resolutions}
\label{ssc:proj_res}

Suppose $\mathcal{M}$ is a Tate Mackey Functor.  We have
a minimal projective resolution
\[
\cdots \to \mathcal{P}_2 \to \mathcal{P}_1 \to \mathcal{P}_0 \to M
\]
where
\[
\mathcal{P}_i \cong \bigoplus_{j=1}^r \left(\mathcal{T}_{H,V}\right)^{\oplus \beta_i(H,V)} .
\]
for uniquely determined non-negative integers $\beta_i(H,V)$.
If $N$ is the number of pairs $(H,V)$, then we may view each
$\beta_i$ as a vector in $\mathbb{Z}^N$ with non-negative entries.
Alternatively, we may view $\beta_i$ as a positive element in
$K_0(\tau(G))$.

The following is not original and is surely well known in much more
general contexts, but we could not find a direct reference:

\begin{prop} \label{prop:Betti}
Suppose $\mathcal{P}_\bullet \to \mathcal{M}$ is a minimal projective
resolution with invariants $\{ \beta_i \}$ in $K_0(\tau(G))$.
Suppose $\ldots, \mathcal{P}'_2, \mathcal{P}'_1, \mathcal{P}'_0$ are projective
Tate Mackey functors with corresponding invariants $\{ \beta'_i \}$.
There exists a projective resolution
\[
\cdots \to \mathcal{P}'_2 \to \mathcal{P}'_1 \to \mathcal{P}'_0 \to
\mathcal{M}  
\]
if and only if
\[
\sum_{i=0}^d (-1)^i \beta'_i(H,V) \ge \sum_{i=0}^d (-1)^i \beta_i(H,V)
\]
for all $(H,V)$ and all $d \ge 0$.
\end{prop}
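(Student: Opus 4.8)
The plan is to reduce the statement to an elementary question about non-negative integer sequences, using the structure theory of minimal projective resolutions over an artinian ring. By Theorem~\ref{thm:stable_Yoshida}, the category of finitely generated Tate Mackey functors is equivalent to the finitely generated module category of $\tau(G)$, which is finite, hence artinian, hence semiperfect; so by Proposition~\ref{prop:Tate_Krull_Schmidt} it is Krull--Schmidt, has projective covers, and $K_0(\tau(G))$ is free abelian on the classes $[\mathcal{T}_{H,V}]$, its ``positive'' cone being the vectors all of whose $(H,V)$-coordinates are $\ge 0$. I would start from the standard fact that over a semiperfect ring a minimal projective resolution of a module is, as a complex, a direct summand of \emph{every} projective resolution of that module, the complement being a contractible complex of projectives. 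Hence a projective resolution of $\mathcal{M}$ with $i$-th term $\mathcal{P}'_i$ exists if and only if there is a contractible complex $\mathcal{C}_\bullet$ of finitely generated projective Tate Mackey functors with $\mathcal{P}'_i \cong \mathcal{P}_i \oplus \mathcal{C}_i$ for all $i$; given such a $\mathcal{C}_\bullet$, one obtains the resolution by transporting the differentials of $\mathcal{P}_\bullet \oplus \mathcal{C}_\bullet$ across these isomorphisms.

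Next I would identify contractible complexes concretely. In a Krull--Schmidt setting a bounded-below contractible complex of finitely generated projectives is a direct sum of ``disks'' $D_n(\mathcal{R})$: a projective $\mathcal{R}$ placed in homological degrees $n$ and $n-1$ (for some $n \ge 1$) joined by the identity map. This is the usual argument that an acyclic bounded-below complex of projectives splits, one degree at a time, at its cycle objects. It follows that $\mathcal{C}_i \cong \mathcal{R}_{i+1} \oplus \mathcal{R}_i$ for finitely generated projectives $\mathcal{R}_1, \mathcal{R}_2, \dots$, where we set $\mathcal{R}_0 := 0$. Passing to classes in $K_0(\tau(G))$, with $\rho_n := [\mathcal{R}_n]$, the problem becomes purely numerical: given positive vectors $\beta_i$ and $\beta'_i$, decide whether there are positive vectors $\rho_1, \rho_2, \dots$ with $\rho_0 = 0$ and $\beta'_i = \beta_i + \rho_{i+1} + \rho_i$ for all $i \ge 0$. (Conversely, from any such $\rho_\bullet$ one builds $\mathcal{C}_\bullet = \bigoplus_{n \ge 1} D_n(\mathcal{R}_n)$ and the resolution $\mathcal{P}_\bullet \oplus \mathcal{C}_\bullet$, whose $i$-th term has class $\beta_i + \rho_{i+1} + \rho_i = \beta'_i$ and is therefore isomorphic to $\mathcal{P}'_i$ by Krull--Schmidt.)

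I would then solve the recursion coordinate by coordinate. Fix $(H,V)$ and write $\delta_i \in \Z$ for the $(H,V)$-coordinate of $\beta'_i - \beta_i$. The relations $\rho_{i+1} = \delta_i - \rho_i$ together with $\rho_0 = 0$ determine $\rho_{d+1} = \delta_d - \delta_{d-1} + \delta_{d-2} - \cdots + (-1)^d \delta_0$ for every $d \ge 0$, and the system admits a solution in non-negative integers precisely when all of these alternating partial sums are non-negative; this non-negativity condition, for all $d \ge 0$ and all $(H,V)$, is exactly the family of inequalities asserted in the statement.

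The only step here that is not purely formal is the structural input invoked at the start---that the minimal resolution splits off any projective resolution, together with the disk decomposition of contractible complexes---and this is exactly where the hypotheses enter: both rest on $\tau(G)$ being semiperfect, i.e., on Proposition~\ref{prop:Tate_Krull_Schmidt}. Over a general coefficient ring the category of Tate Mackey functors need not be Krull--Schmidt, and then no such clean description is available. Everything after the reduction to the numerical problem is telescoping-sum bookkeeping.
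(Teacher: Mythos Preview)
Your argument is correct and is essentially the paper's own: both reduce to the identity $\mathcal{P}'_i \cong \mathcal{P}_i \oplus \mathcal{R}_i \oplus \mathcal{R}_{i+1}$ (the paper reaches it via Schanuel's lemma and the observation that minimal syzygies have no projective summands, you via the direct-summand property of minimal resolutions and the disk decomposition of contractible complexes), and then both read off the same telescoping recursion in $K_0(\tau(G))$. One caveat: your formula $\rho_{d+1}=\sum_{i=0}^d(-1)^{d-i}\delta_i$ is right, but it equals $(-1)^d\sum_{i=0}^d(-1)^i(\beta'_i-\beta_i)$, so the non-negativity you derive is literally the displayed inequality only for even $d$ and its reverse for odd $d$; the paper's own proof yields the same corrected condition, so this is a sign slip in the \emph{statement} rather than a defect in your reasoning, but your final sentence should not claim an exact match.
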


\begin{proof}
Let $\Omega_i(M)$ be $i$th syzygy of the minimal
projective resolution.  In other words,
$\Omega_0(\mathcal{M})=\mathcal{M}$ and
we have exact sequences
\[
0 \to \Omega_{i+1}(\mathcal{M}) \to \mathcal{P}_i \to \Omega_i(\mathcal{M}) \to 0
\]
for every $i$.

Suppose the desired projective resolution exists where
$\Omega'_i(\mathcal{M})$ is the $i$th syzygy of
the other resolution.
Recall that we have projectives $\mathcal{Q}_i$ and $\mathcal{Q}'_i$ such that
\[
\Omega'_i(\mathcal{M}) \oplus \mathcal{Q}_i \cong \Omega_i(\mathcal{M}) \oplus
\mathcal{Q}'_i .
\]
Since the sequence for $\Omega_i(\mathcal{M})$ is minimal, it has no projective
summands.  Thus, by Krull-Schmidt, we have
\begin{equation} \label{eq:minimal_omega}
\Omega'_i(\mathcal{M}) \cong \Omega_i(\mathcal{M}) \oplus \mathcal{R}_i
\end{equation}
for some projective $\mathcal{R}_i$.

By Schanuel's Lemma, we have an isomorphism
\begin{equation} \label{eq:Schanuel}
\Omega'_{d+1}(\mathcal{M}) \oplus \mathcal{P}_d \oplus
\mathcal{P}'_{d-1} \oplus \mathcal{P}_{d-2} \cdots
\cong
\Omega_{d+1}(\mathcal{M}) \oplus \mathcal{P}'_d \oplus \mathcal{P}_{d-1}
\oplus \mathcal{P}'_{d-2} \cdots 
\end{equation}
Passing to K-theory, the two equations \eqref{eq:minimal_omega}
and \eqref{eq:Schanuel} imply
\[
[\mathcal{R}_d] + [\mathcal{P}_d] + [\mathcal{P}'_{d-1}] + [\mathcal{P}_{d-2}] \cdots =
[\mathcal{P}'_d] + [\mathcal{P}_{d-1}] + [\mathcal{P}'_{d-2}]
\]
after canceling $\Omega_{d+1}(\mathcal{M})$ from both sides.
Since $[\mathcal{R}_d]$ is positive,
we get the desired inequality.

For the converse, we simply build our resolution while establishing the
equations
\[
\Omega'_i \cong \Omega_i \oplus \mathcal{R}_i
\]
and
\[
\mathcal{P}'_i \cong \mathcal{P}_i \oplus \mathcal{R}_i \oplus
\mathcal{R}_{i+1}
\]
recursively as we go.
Indeed, comparing
\[
0 \to \Omega_{i+1}(\mathcal{M}) \to \mathcal{P}_i \to
\Omega_i(\mathcal{M}) \to 0
\]
and
\[
0 \to \Omega'_{i+1}(\mathcal{M}) \to \mathcal{P}'_i \to
\Omega_i(\mathcal{M}) \oplus \mathcal{R}_i \to 0
\]
the desired equations follow quickly from the fact that
$\mathcal{P}_i \to \Omega_i(\mathcal{M})$ is a projective cover and
$\mathcal{R}_i$ is projective.
\end{proof}

\subsection{Permutation resolutions}
\label{ssec:perm_res}

In view of Theorem~\ref{thm:main_detailed}, given an invertible
$G$-lattice $M$, we want to be able to compute permutation presentations
of $\mathcal{H}^1(M)$ in the following sense.

\begin{definition}
Let $\mathcal{M}$ be a Tate Mackey functor.
A \emph{permutation resolution} of $\mathcal{M}$ is an exact sequence of
Tate Mackey functors
\[
\cdots \to \widehat{\mathcal{H}}^0(X_2) \to \widehat{\mathcal{H}}^0(X_1)
 \to \widehat{\mathcal{H}}^0(X_0) \to \mathcal{M} \to 0
\]
where $X_0, X_1, X_2, \ldots$ are $G$-sets.
A \emph{permutation presentation} of $\mathcal{M}$ is a permutation
resolution with only two $G$-sets $X_0,X_1$.
\end{definition}

Since permutation Tate Mackey functors are in particular projective,
we can use Proposition~\ref{prop:Betti}.
In the remainder of this section, we sketch how to determine all
permutation resolutions for an arbitrary Tate Mackey functor.
An extended example is worked out in Section~\ref{sec:blunk_example}.

\subsubsection{Determine decompositions of permutation lattices}

We need to be able to determine the decomposition
\begin{equation} \label{eq:perm_decomp}
\widehat{\mathcal{H}}^0(\mathbb{Z}[G/K])
\cong \bigoplus_{H,V} \left(\mathcal{T}_{H,V}\right)^{\oplus m^K_{H,V}} 
\end{equation}
where $m^K_{H,V}$ are non-negative integers.
Equivalently, an explicit description of the map
\begin{equation} \label{eq:perm_decomp_Ktheory}
K_0(\permcat{\mathbb{Z}G}) \to K_0(\tau(G))
\end{equation}
as a matrix in the natural bases
has the integers $m^K_{H,V}$ as its entries.

This can be done directly using the theory of Mackey functors
(for example, using \cite[8.6, 8.8]{T-W}).
However, this is essentially about the representation
theory of $\mathbb{F}_pG$-modules.
Indeed, for each prime $p$ dividing $|G|$, we compute the decomposition
of $\mathbb{F}_p[G/K]$ into indecomposable summands, which must be trivial
source modules, then lift to $\mathbb{Z}_p$.
We find that
\[
\mathbb{F}_p[G/K]
\cong \bigoplus_{H,V} \left(T_{H,V} \otimes \mathbb{F}_p \right)^{\oplus
n^K_{H,V}} 
\]
where the $n^K_{H,V}$ agree with the multiplicities $m^K_{H,V}$ from
\eqref{eq:perm_decomp}, except that $n^K_{H,V}$ only makes sense for $H$
a $p$-group and $m^K_{H,V}$ only makes sense when $H \ne 1$.

These decompositions can be computed explicitly using 
the \texttt{meataxe} in GAP \cite{GAP4}.
Alternatively, one can determine multiplicities by computing the image
of permutation lattices in the trivial source ring and then decomposing
them using the trivial source character tables
(see \cite[\S{5.5}]{BensonI} and \cite{Boehmler2024}).

\subsubsection{Compute a minimal projective resolution}

Since the finitely generated subcategory of $\operatorname{TMack}(G)$
is a Krull-Schmidt category, minimal projective resolutions exist.
Computing them amounts to iteratively finding a projective cover of a Tate Mackey
functor.
In order to compute these in practice, it suffices to consider
each prime $p$ separately.
One then considers successive Brauer quotients of $p$-groups, starting
with the Sylow $p$-subgroup and then working down the subgroup lattice.
Since the Green correspondent of each trivial source module with vertex
$H$ is a projective module in $\mathbb{Z}_pH$, we just compute
projective covers for $\mathbb{F}_pH$-modules.

\subsubsection{Determine permutation resolutions}

Suppose $\beta_0, \beta_1,\ldots$ are the Betti vectors in
$K_0(\tau(G))$ of the minimal
projective resolution of $\mathcal{M}$.
From \eqref{eq:perm_decomp_Ktheory}, we can compute the multiplicities
for $\widehat{\mathcal{H}}^0(X)$ for any $G$-set $X$.
Thus, in view of Proposition~\ref{prop:Betti},
we can compute all possible permutation resolutions
by simply comparing vectors for $X_0,X_1,\ldots$ and
$\beta_0,\beta_1,\ldots$ and recursively determining all the
possibilities.

\section{Revisiting del Pezzo surfaces of degree 6}
\label{sec:blunk_example}

In this section, we demonstrate how the techniques discussed above can
be used to describe tori using Brauer groups.
Specifically, we show how Blunk's classification of del Pezzo
surfaces of degree 6 could be rediscovered using this technique.

\subsection{Trivial source modules for the dihedral group of order 12}
\label{ssec:tsm_d12}

Let $G$ be a finite group isomorphic to $D_{12} \cong S_3 \times S_2$.
We pick generators $\sigma, \tau, \delta$
where
\[
\tau \mapsto (12),\quad
\delta \mapsto (123),\quad
\sigma \mapsto (45)
\]
exhibits an embedding of $G$ into $S_5$.
We list and name all conjugacy classes of nontrivial proper subgroups of
$G$ in Table~\ref{tab:D6conj}.

\begin{table}[ht]
\centering
\begin{tabular}{|l|l|l|}
\hline
Structure & Name & Representative\\
\hline \hline
$C_2$ & \legend{A1} & $\langle \sigma \rangle$ \\
 & \legend{A2} & $\langle \tau \rangle$ \\
 & \legend{A3} & $\langle \sigma\tau \rangle$ \\
\hline
$C_3$ & \legend{B} &$ \langle \delta \rangle$ \\
\hline
$C_2 \times C_2$ & \legend{C} & $\langle \sigma, \tau \rangle$ \\
\hline
$C_6$ & \legend{D} & $\langle \sigma, \delta \rangle$ \\
\hline
$S_3$ & \legend{E1} & $\langle \tau, \delta \rangle$ \\
 & \legend{E2} & $\langle \sigma\tau, \delta \rangle$ \\
\hline
\end{tabular}
\caption{Conjugacy classes of subgroups in $D_6$}
\label{tab:D6conj}
\end{table}

Next, we enumerate all trivial source modules with non-trivial vertex
for $G$ over $\mathbb{Z}_p$ where $p$ is either $2$ or $3$.
Recall that each trivial source module $T_{H,V}$ corresponds to a pair
$(H,V)$ where $H$ is a $p$-subgroup and $V$ is a simple
$\mathbb{F}_pN_G(H)/H$-module.
We list all the possibilities in Table~\ref{tab:ts_def}, where we use
an ad-hoc labeling for the simple modules.

\newcommand{\iden}{$\begin{pmatrix} 1 & 0\\ 0 & 1 \end{pmatrix}$}
\newcommand{\swap}{$\begin{pmatrix} 0 & 1\\ 1 & 0 \end{pmatrix}$}
\newcommand{\rota}{$\begin{pmatrix} 0 & -1\\ 1 & -1 \end{pmatrix}$}
\newcommand{\nulo}{$\begin{pmatrix} 1 \end{pmatrix}$}
\newcommand{\nego}{$\begin{pmatrix} -1 \end{pmatrix}$}

\begin{table}[ht]
\centering
\begin{tabular}{|l|c||c|c|c|}
\hline
Label& $T_{H,V}$ & $\sigma$ & $\tau$ & $\delta$ \\
\hline
${\legend{A1},1}$ & $\mathbb{Z}_2^2$ &
\iden & \swap & \iden \\
${\legend{A1},2}$ & $\mathbb{Z}_2^2$ &
\iden & \swap & \rota \\
${\legend{A2},1}$ & $\mathbb{Z}_2^2$ &
\swap & \iden & \iden \\
${\legend{A3},1}$ & $\mathbb{Z}_2^2$ &
\swap & \swap & \iden \\
${\legend{C},1}$ & $\mathbb{Z}_2$ &
\nulo & \nulo & \nulo \\
\hline
${\legend{B},0}$ & $\mathbb{Z}_3$ &
\nulo & \nulo & \nulo \\
${\legend{B},1}$ & $\mathbb{Z}_3$ &
\nulo & \nego & \nulo \\
${\legend{B},2}$ & $\mathbb{Z}_3$ &
\nego & \nulo & \nulo \\
${\legend{B},3}$ & $\mathbb{Z}_3$ &
\nego & \nego & \nulo \\
\hline
\end{tabular}
\caption{Trivial source modules with non-trivial vertex}
\label{tab:ts_def}
\end{table}

For each transitive $G$-set $X=G/K$, we have a decomposition
\[
\widehat{\mathcal{H}}^0(X) = \bigoplus_{H,V}
\left(\mathcal{T}_{H,V} \right)^{m_{H,V}} .
\]
This can be determined by decomposition the permutation
lattice $\mathbb{Z}_p[G/K]$
as a direct sum of trivial source modules
and removing those with trivial vertex.
The multiplicities can be found in Table~\ref{tab:perm_to_ts}.

\begin{table}[ht]
\centering
\begin{tabular}{|c| c c |c |c | c | c c c c|}
\hline
{\footnotesize{ Vertex $H$ } }& \multicolumn{2}{c}{\legend{A1}} & \legend{A2} & \legend{A3} & \legend{C}
& \multicolumn{4}{c|}{\legend{B}} \\
\hline
{\footnotesize{Simple $V$} } & 1 & 2 & 1 & 1 & 1 & 0 & 1 & 2 & 3 \\
\hline
$G/\legend{A1} $ & 1  & 2  &  &   &   &   &   &   &  \\
$G/\legend{A2} $  &   &  & 1 &       &     &      &    &     &   \\
$G/\legend{A3} $  &  &   &   &  1   &     &     &      &     &    \\
$G/\legend{C} $  &   & 1  &  &   & 1   &   &    &    &    \\
$G/\legend{B} $  &  &  &  &    &    &   1    &   1  &   1  &  1   \\
$G/\legend{D}  $  & 1  &   &  &     &    &  1   &  1   &   &    \\
$G/\legend{E1} $  &   &  & 1 &     &   & 1   &    &   1   &    \\
$G/\legend{E2} $  &  &  &  &    1  &     & 1 &    &    &   1 \\
$G/G $  &  &   &  &      &  1  &  1   &      &     &     \\
\hline
\end{tabular}
\caption{Decomposition of permutation Tate Mackey functors in terms of
indecomposable projectives}
\label{tab:perm_to_ts}
\end{table}

\subsection{Resolving the character lattice}
\label{ssec:char}

Consider the $G$-lattice $M$ which has underlying abelian group
$\mathbb{Z}^2$ and whose $G$-action is given by
\[
\tau \mapsto \begin{pmatrix} 0 & 1\\ 1 & 0 \end{pmatrix} ,\quad
\delta \mapsto \begin{pmatrix} 0 & -1\\ 1 & -1 \end{pmatrix} ,\quad
\sigma \mapsto \begin{pmatrix} -1 & 0\\ 0 & -1 \end{pmatrix}.
\]
This is the character lattice of the torus $T$ in a del Pezzo surface of
degree $6$ where $G$ is the group of fan automorphisms.
By comparing group cohomology of each side, one checks that
\[
\mathcal{H}^1(M) \cong
\mathcal{T}_{A1,2}
\oplus \mathcal{T}_{B,2}.
\]
Thus, the minimal projective resolution has length $1$ in this case.

We want to build a minimal permutation resolution 
\[
\ldots \widehat{\mathcal{H}}^0(X_1) \to \widehat{\mathcal{H}}^0(X_0) \to
\widehat{\mathcal{H}}^1(M)
\]
for $G$-sets $X_0$ and $X_1$.

Consulting Table~\ref{tab:perm_to_ts}, the minimal choices for $X_0$ are
\[
G/\legend{A1} \sqcup G/\legend{B},
\quad
G/\legend{C} \sqcup G/\legend{B},
\quad
G/\legend{A1} \sqcup G/\legend{E1},
\quad
G/\legend{C} \sqcup G/\legend{E1} .
\]
All of them lead to different permutation resolutions,
but we will choose the last one (which will lead to Blunk's theorem).
Observe that
\[
\widehat{\mathcal{H}}^0(G/\legend{C} \sqcup G/\legend{E1})
\cong \left(\mathcal{T}_{\legend{A1},2}
\oplus \mathcal{T}_{\legend{C},1}\right)
\oplus \left( \mathcal{T}_{\legend{A2},1}
\oplus \mathcal{T}_{\legend{B},0}
\oplus \mathcal{T}_{\legend{B},2}\right).
\]
The kernel of the map onto $\widehat{\mathcal{H}}^0(M)$ is isomorphic to:
\[
\mathcal{T}_{\legend{C},1}
\oplus \mathcal{T}_{\legend{A2},1}
\oplus \mathcal{T}_{\legend{B},0}.
\]
This is isomorphic to
$\widehat{\mathcal{H}}^0(G/\legend{A2} \sqcup G/G)$.
Therefore, we have an exact sequence
\[
0 \to \widehat{\mathcal{H}}^0(G/\legend{A2} \sqcup G/G)
\to \widehat{\mathcal{H}}^0(G/\legend{C} \sqcup G/\legend{E1})
\to \widehat{\mathcal{H}}^0(M) \to 0,
\]
which gives a permutation presentation.

Let us now connect this to the Brauer group.
Let $K/k$ be a field extension with Galois group $D_{12}$,
define
\[
F_2 := K^{\langle \tau,\delta \rangle}=K^{\legend{E1}},\quad
F_3 := K^{\langle \tau, \sigma \rangle}=K^{\legend{}C},
\]
and observe that
\[
k = K^G,\quad
F_2F_3 = K^{\langle \tau \rangle} = K^{\legend{A2}}.
\]
Here we abuse notation by conflating the conjugacy classes
$\legend{A}$, $\legend{C}$ and $\legend{E1}$
with their representative in the table.
Appealing to Theorem~\ref{thm:main_existence_relative},
we conclude that
\[
H^1(k,T) \cong \operatorname{ker} \left( \Psi:
\rBr(F_2) \times \rBr(F_3)
\to \rBr(F_2F_3) \times \rBr(k)
\right).
\]

By finding explicit decompositions of the permutation modules into
trivial source modules and applying Yoshida's theorem,
we get an explicit expression
\[
\Psi(B,Q) =
\left(
\operatorname{Res}_{F_2F_3/F_2}(B),\
\operatorname{Cor}_{F_2/k}(B) + \operatorname{Cor}_{F_3/k}(Q) \right)
.
\]

This description uses the relative Brauer group, but we easily get 
a description for the absolute Brauer group:

\begin{prop} \label{prop:ourBlunk}
Let $T$ be a $k$-torus with a faithful open orbit on a del Pezzo surface of
degree $6$.  Suppose $F_3$ is the cubic \'etale $k$-algebra and
$F_2$ is the quadratic \'etale $k$-algebra defining the isomorphism
class of $T$.
There is a bijection between elements of $H^1(k,T)$ and pairs $(B,Q)$
where $B \in \Br(F_2)$, $Q \in \Br(F_3)$ such that the following hold:
\begin{enumerate}
\item $\operatorname{Res}_{K/F_2}(B)=0$,
\item $\operatorname{Res}_{K/F_3}(Q)=0$,
\item $\operatorname{Res}_{F_2F_3/F_2}(B)=0$, and
\item $\operatorname{Cor}_{F_2/k}(B) + \operatorname{Cor}_{F_3/k}(Q)=0$.
\end{enumerate}
\end{prop}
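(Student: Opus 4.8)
The plan is to derive Proposition~\ref{prop:ourBlunk} from the relative-Brauer description already obtained in Section~\ref{ssec:char}, namely the exact sequence
\[
0 \to H^1(k,T) \to \rBr(F_2) \times \rBr(F_3)
\xrightarrow{\Psi} \rBr(F_2F_3) \times \rBr(k),
\qquad
\Psi(B,Q) = \bigl(\operatorname{Res}_{F_2F_3/F_2}B,\ \operatorname{Cor}_{F_2/k}B + \operatorname{Cor}_{F_3/k}Q\bigr),
\]
together with the general comparison between relative and absolute Brauer groups discussed after Theorem~\ref{thm:main_existence_relative}. First I would recall that for a finite separable extension $E/k$ split by $K$, one has $\rBr(E) = \Br(K/E) = \ker\bigl(\operatorname{Res}_{K/E} : \Br(E) \to \Br(K)\bigr)$, and that because $K$ splits $T$ we have $H^1(k,T) = H^1(K/k,T(K))$, so the relative description already captures all of $H^1(k,T)$. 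Thus an element of $H^1(k,T)$ corresponds to a pair $(B,Q)$ with $B\in\Br(F_2)$, $Q\in\Br(F_3)$ lying in the relative subgroups --- which is exactly conditions (1) and (2) --- and additionally killed by $\Psi$, which unwinds to conditions (3) and (4). The only subtlety is that $F_2$ and $F_3$ here are the quadratic and cubic \'etale algebras attached to $T$ (in the del Pezzo dictionary), whereas in Section~\ref{ssec:char} they were defined as fixed fields $K^{\legend{E1}}$ and $K^{\legend{C}}$ for a specific splitting field with group $D_{12}$; I would note that these agree and that the Galois closure of $F_2F_3$ is precisely such a $K$, so no generality is lost.

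Concretely the proof is a short three-step argument. \emph{Step 1: identify the splitting data.} Given $T$ with its quadratic \'etale algebra $F_2$ and cubic \'etale algebra $F_3$, let $K$ be the Galois closure of $F_2F_3$ over $k$; by hypothesis $K$ splits $T$ and $\operatorname{Gal}(K/k)$ embeds in $S_2 \times S_3 \cong D_{12}$, and when $T$ acts faithfully the image is all of $D_{12}$, so we are exactly in the setting of Section~\ref{ssec:tsm_d12}. (If the action is only generically faithful on the surface but $T$ is split by a smaller field, the permutation presentation restricts accordingly and the same conditions hold vacuously or after obvious identification; I would remark on this rather than belabor it.) \emph{Step 2: apply the relative description.} Theorem~\ref{thm:main} applied to the permutation presentation
\[
0 \to \widehat{\mathcal{H}}^0(G/\legend{A2} \sqcup G/G)
\to \widehat{\mathcal{H}}^0(G/\legend{C} \sqcup G/\legend{E1})
\to \widehat{\mathcal{H}}^1(M) \to 0
\]
yields $H^1(k,T) \cong \ker\bigl(\Psi : \rBr(F_2)\times\rBr(F_3) \to \rBr(F_2F_3)\times\rBr(k)\bigr)$ with $\Psi$ as displayed, using Yoshida's theorem to read off that the component maps are the indicated restrictions and corestrictions (this identification of $\psi^\ast$ was already carried out in Section~\ref{ssec:char}). \emph{Step 3: rewrite membership in the relative groups.} Spell out $\rBr(F_i) = \ker(\operatorname{Res}_{K/F_i})$ and $\rBr(F_2F_3) = \ker(\operatorname{Res}_{K/F_2F_3})$, $\rBr(k) = \ker(\operatorname{Res}_{K/k})$; then $(B,Q)$ represents a class in $H^1(k,T)$ iff $B,Q$ lie in the relative groups --- conditions (1),(2) --- and $\Psi(B,Q)=0$ in $\rBr(F_2F_3)\times\rBr(k)$. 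Since $\rBr(F_2F_3) \hookrightarrow \Br(F_2F_3)$ and $\rBr(k)\hookrightarrow\Br(k)$, the vanishing of $\Psi(B,Q)$ is equivalent to the vanishing of its image in $\Br(F_2F_3)\times\Br(k)$, which is conditions (3),(4). That the bijection is compatible with the transition from relative to absolute Brauer groups is precisely the commutative-diagram argument at the end of the proof of Theorem~\ref{thm:main_existence_relative}, so I would invoke it rather than repeat it.

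\textbf{The main obstacle} I anticipate is not any deep computation but the bookkeeping in Step~1: carefully matching the intrinsic \'etale algebras $F_2, F_3$ of the del Pezzo torus with the fixed-field description $K^{\legend{E1}}, K^{\legend{C}}$, and verifying that $\operatorname{Cor}_{F_3/k}$ appears rather than some twist of it --- i.e.\ that the map $\mathbb{Z}[G/\legend{C}\sqcup G/\legend{E1}] \to \mathbb{Z}[G/\legend{A2}\sqcup G/G]$ dualizes under the Brauer Mackey functor exactly to the stated combination of $\operatorname{Res}$ and $\operatorname{Cor}$, with no nontrivial $C_g$ (conjugation/field-isomorphism) factors surviving. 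This is where one must be most careful about orientations and choices of double-coset representatives in the last axiom of a Mackey functor; but since the $\psi^\ast$ computation was already performed in Section~\ref{ssec:char}, the proposition is essentially a translation exercise, and I would present it as such. A secondary point worth a sentence is explaining why conditions (1) and (2) are genuinely needed in the absolute statement even though they were automatic in the relative one --- this is the ``not completely trivial'' phenomenon flagged after Theorem~\ref{thm:main_existence_relative} and illustrates it concretely.
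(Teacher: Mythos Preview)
Your proposal is correct and follows exactly the approach the paper intends: the paper presents this proposition without proof, merely saying ``we easily get a description for the absolute Brauer group'' immediately after the relative description, and your Steps~2--3 spell out precisely this easy passage---membership in $\rBr(F_i)$ becomes conditions (1) and (2), while $\Psi(B,Q)=0$ becomes conditions (3) and (4). Your Step~1 bookkeeping and the remarks on the étale-algebra versus fixed-field identification are more detail than the paper supplies, but entirely in its spirit.
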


Other choices of permutation presentations will give different
descriptions.

\subsection{Comparison to Blunk's classification}
\label{ssec:blunk_compare}

Blunk's result states that $(B,Q)$ satisfy the relations
\begin{enumerate}
\item $\operatorname{Res}_{F_2F_3/F_2}(B)=0$,
\item $\operatorname{Res}_{F_2F_3/F_3}(Q)=0$,
\item $\operatorname{Cor}_{F_2/k}(B)=0$, and
\item $\operatorname{Cor}_{F_3/k}(Q)=0$.
\end{enumerate}
At first, these seem like stronger conditions than
Proposition~\ref{prop:ourBlunk} above,
but they are the same.

Indeed, $\operatorname{Res}_{K/F_2}(B)$ splitting implies that $B$ is
$3$-torsion, while $\operatorname{Res}_{K/F_3}(Q)$ splitting implies
that $Q$ is $4$-torsion.
Therefore,
$\operatorname{Cor}_{F_2/k}(B)$ and $\operatorname{Cor}_{F_3/k}(Q)$
split independently as soon as their sum splits.
Moreover, by the Mackey decomposition, we have
\[
\operatorname{Res}_{F_2F_3/k}\left(
\operatorname{Cor}_{F_3/k}(Q)\right)=
\operatorname{Res}_{F_2F_3/F_3}(Q) +
\operatorname{Cor}_{K/F_2F_3}\left(
\delta^\ast \left(\operatorname{Res}_{K/F_3}(Q)\right)\right).
\]
Since $\operatorname{Res}_{K/F_3}(Q)$ and $\operatorname{Cor}_{F_3/k}(Q)$
split, we conclude that $\operatorname{Res}_{F_2F_3/F_3}(Q)$ splits.


\bibliographystyle{alpha}

\begin{thebibliography}{KMRT98}

\bibitem[ARS97]{ARS}
Maurice Auslander, Idun Reiten, and Sverre~O. Smal{\o}.
\newblock {\em Representation Theory of {{Artin}} Algebras}, volume~36 of {\em Cambridge Studies in Advanced Mathematics}.
\newblock Cambridge University Press, Cambridge, 1997.

\bibitem[BDLM24]{BDLM}
Matthew~R. Ballard, Alexander Duncan, Alicia Lamarche, and Patrick~K. McFaddin.
\newblock Separable algebras and coflasque resolutions.
\newblock {\em Advances in Mathematics}, 444:109596, 2024.

\bibitem[Ben98]{BensonI}
D.~J. Benson.
\newblock {\em Representations and Cohomology. {{I}}}, volume~30 of {\em Cambridge Studies in Advanced Mathematics}.
\newblock Cambridge University Press, Cambridge, 2 edition, 1998.

\bibitem[Blu10]{BLUNK}
Mark Blunk.
\newblock Del pezzo surfaces of degree 6 over an arbitrary field.
\newblock {\em Journal of Algebra}, 323(1):42--58, 2010.

\bibitem[B{\"o}h24]{Boehmler2024}
Bernhard B{\"o}hmler.
\newblock {\em Trivial source character tables of small finite groups}.
\newblock Doctoral thesis, Rheinland-Pf{\"a}lzische Technische Universit{\"a}t Kaiserslautern-Landau, 2024.

\bibitem[Bo09]{Bouc}
Serge Bouc.
\newblock Complexity and cohomology of cohomological Mackey functors.
\newblock {\em Advances in Mathematics}, 221(3):983--1045, 2009.

\bibitem[CS87]{Colliot-Thelene.Sansuc87}
Jean-Louis {Colliot-Th{\'e}l{\`e}ne} and Jean-Jacques Sansuc.
\newblock Principal homogeneous spaces under flasque tori: {{Applications}}.
\newblock {\em Journal of Algebra}, 106(1):148--205, March 1987.

\bibitem[CTS77]{colliot-thelene1977r}
Jean-Louis Colliot-Thélène and Jean-Jacques Sansuc.
\newblock La $\mathbb{R}$-équivalence sur les tores.
\newblock {\em Annales scientifiques de l'École Normale Supérieure, Serie 4}, 10(2):175--229, 1977.

\bibitem[Dun16]{Duncan2016}
Alexander Duncan.
\newblock Twisted forms of toric varieties.
\newblock {\em Transformation Groups}, 21(3):763--802, 2016.

\bibitem[GAP24]{GAP4}
The GAP~Group.
\newblock {\em {GAP -- Groups, Algorithms, and Programming, Version 4.14.0}}, 2024.

\bibitem[Hur16]{huruguen2016special}
Mathieu Huruguen.
\newblock Special reductive groups over an arbitrary field.
\newblock {\em Transformation Groups}, 21(4):1079--1104, 2016.

\bibitem[KMRT98]{knus1998book}
Max-Albert Knus, Alexander Merkurjev, Markus Rost, and Jean-Pierre Tignol.
\newblock {\em The Book of Involutions}.
\newblock American Mathematical Society, Providence, R.I., 1998.

\bibitem[Las23]{Lassueur2023}
Caroline Lassueur.
\newblock {\em A tour of $p$-permutation modules and related classes of modules}.
\newblock {\em Jahresbericht der Deutschen Mathematiker-Vereinigung}, 125(3):137--189, September 2023.

\bibitem[Lee09]{Lee09Flasque}
Ting~Yu Lee.
\newblock Flasque resolutions of algebraic tori.
\newblock Master's thesis, Institut de Math{\'e}matiques de Bordeaux, 2009.

\bibitem[Lin16]{Linckelmann16}
Markus Linckelmann.
\newblock On equivalences for cohomological {{Mackey}} functors.
\newblock {\em Representation Theory of the American Mathematical Society}, 20(7):162--171, May 2016.

\bibitem[Lin18]{Linckelmann_2018}
Markus Linckelmann.
\newblock {\em The Block Theory of Finite Group Algebras}.
\newblock London Mathematical Society Student Texts. Cambridge University Press, 2018.

\bibitem[Mer22]{Merku}
Alexander Merkurjev.
\newblock Classification of special reductive groups.
\newblock {\em Michigan Math. J.}, 72:529--541, 2022.

\bibitem[Sal84]{Saltman1984}
David~J. Saltman.
\newblock Retract rational fields and cyclic galois extensions.
\newblock {\em Israel Journal of Mathematics}, 47(2):165--215, 1984.

\bibitem[TW90]{Thvenaz1990SimpleMF}
Jacques Th{\'e}venaz and Peter Webb.
\newblock Simple mackey functors.
\newblock {\em Rendiconti Del Circolo Matematico Di Palermo}, 23:299--319, 1990.

\bibitem[TW95]{T-W}
Jacques Th{\'e}venaz and Peter Webb.
\newblock The structure of mackey functors.
\newblock {\em Transactions of the American Mathematical Society}, 347(6):1865--1961, 1995.

\bibitem[Vos74]{Voskresenskii_1974}
V.~E. Voskresenski{\u{i}}.
\newblock Stable equivelence of algebraic tori.
\newblock {\em Mathematics of the USSR-Izvestiya}, 8(1):1, feb 1974.

\bibitem[Vos98]{MR1634406}
V.~E. Voskresenski{\u{i}}.
\newblock {\em Algebraic groups and their birational invariants}, volume 179 of {\em Translations of Mathematical Monographs}.
\newblock American Mathematical Society, Providence, RI, 1998.
\newblock Translated from the Russian manuscript by Boris Kunyavski.

\bibitem[Yos83]{YoshidaII}
Tomoyuki Yoshida.
\newblock {On $G$-functors (II): Hecke operators and $G$-functors}.
\newblock {\em Journal of the Mathematical Society of Japan}, 35(1):179 -- 190, 1983.

\end{thebibliography}

\end{document}